\newtheorem{theorem}{Theorem}
\newtheorem{assumption}{Assumption}
\newtheorem{lemma}[theorem]{Lemma}
\newtheorem{proposition}[theorem]{Proposition}
\newtheorem{definition}[theorem]{Definition}
\newtheorem{corollary}[theorem]{Corollary}
\newtheorem{remark}[theorem]{Remark}
\newcommand{\R}{\mathbb{R}}
\newcommand{\N}{\mathbb{N}}
\newcommand{\II}{\mathcal{I}}
\newcommand{\KK}{\mathcal{K}}
\newcommand{\UU}{\mathcal{U}}
\renewcommand{\O}{\mathrm{O}}
\newcommand{\RR}{\mathsf{R}}
\newcommand{\RRR}{\mathcal{R}}
\newcommand{\XX}{\mathcal{X}}
\newcommand{\T}{\bar{T}}
\DeclareMathOperator{\e}{e}
\newcommand{\diag}{\mathsf{D}}
\newcommand{\Kp}{\mathsf{K}}
\newcommand{\uKp}{\underline{\Kp}}
\newcommand{\dt}{\Delta}
\newcommand{\xhat}{\hat{x}}
\newcommand{\dd}{\mathrm{d}}
\renewcommand{\epsilon}{\varepsilon}
\DeclareMathOperator{\sat}{sat}
\DeclareMathOperator{\rank}{rk}
\DeclareMathOperator{\inv}{inv}
\newcommand{\startmodif}{\begingroup}
\newcommand{\stopmodif}{\endgroup}
\newcommand{\startmodifVA}{\begingroup}
\newcommand{\HH}{\mathcal{H}}
\newcommand{\lip}{\kappa}
\renewcommand{\S}{\mathbb{S}}
\newcommand{\defeq}{\vcentcolon=}
\renewcommand{\le}{\leqslant}
\renewcommand{\ge}{\geqslant}
\renewcommand{\leq}{\leqslant} 
\renewcommand{\geq}{\geqslant}
\newcommand{\lang}{\left\langle}
\newcommand{\rang}{\right\rangle}
\DeclareMathOperator{\codim}{codim}
\renewcommand{\emptyset}{\varnothing}
\DeclareMathOperator{\Id}{Id}
\newcommand{\invert}{\mathrm{inv}}
\renewcommand{\mid}{\;:\;}
\title{Exponential stabilizability and observability at the target
imply semiglobal exponential stabilizability
by templated
output
feedback
}
\author[1]{Vincent Andrieu}
\author[2]{Lucas Brivadis}
\author[3]{Jean-Paul Gauthier}
\author[4]{Ludovic Sacchelli}
\author[1]{Ulysse Serres}
\affil[1]{\small Univ. Lyon, Universit\'e Claude Bernard Lyon 1, CNRS, LAGEPP UMR 5007, 43 bd du 11 novembre 1918, F-69100 Villeurbanne, France (emails: \texttt{vincent.andrieu@univ-lyon1.fr, ulysse.serres@univ-lyon1.fr})}
\affil[2]{\small Université Paris-Saclay, CNRS, CentraleSupélec, Laboratoire des Signaux et Systèmes, 91190, Gif-sur-Yvette, France
(email: \texttt{lucas.brivadis@centralesupelec.fr})
}%
\affil[3]{\small  Universit\'e de Toulon, Aix Marseille Univ, CNRS, LIS, France (email: \texttt{jean-paul.gauthier@univ-tln.fr}}
\affil[4]{\small Inria, Université Côte d’Azur, CNRS, LJAD, MCTAO team, Sophia Antipolis, France
 (email: \texttt{ludovic.sacchelli@inria.fr})}
\date{\today}
\begin{document}

\maketitle

\begin{abstract}
For nonlinear analytic control systems, we introduce a new paradigm for dynamic output feedback stabilization. We propose to periodically sample the usual observer based control law,
and to reshape it so that it coincides with a ``control template'' on each time period. By choosing a control template making the system observable, we prove that this method allows to bypass the uniform observability assumption that is used in most nonlinear separation principles.
We prove the genericity of control templates by adapting a universality theorem of Sussmann.
\end{abstract}

\section{Introduction}

The concept of dynamic output feedback holds a central position in control theory. When confronted with a model of a physical phenomenon encapsulating a controlled dynamical system and observable parameters, it is crucial to design a control strategy that ensures stability at a particular operating condition. As a result, the problem of output feedback stabilization has gathered considerable attention from researchers.
In the 1990s and 2000s, many researchers tackled this issue, continuously expanding the class of systems for which it was possible to design an output feedback loop.
In \cite{AndrieuPraly2009}, an overview on existing techniques is given.

The most commonly accepted idea for synthesizing a control law is to split this problem into two subtasks: the first being to stabilize the system using state feedback, and the second being to reconstruct the system's state from the available measurements. 
This approach then involves combining the state reconstruction procedure (a dynamic asymptotic observer) with the obtained state feedback to form a dynamic output feedback scheme.
\startmodif
This is the approach of the seminal articles by Teel and Praly \cite{TeelPraly1994, TeelPraly1995}
as well as Jouan and Gauthier \cite{jouan, jouan1996finite}, who obtained so-called nonlinear separation principles.
With few exceptions that we discuss later, most such existing results
\stopmodif
require that the observability property allowing state estimation is uniform over all time-varying inputs:
\begin{description}
    \item[o1)] For any input, two different states lead to different measured outputs.
\end{description}
Unfortunately, it was demonstrated by Gauthier and Kupka in \cite{Gauthier_book} that this observability requirement is generically not satisfied if the output dimension is less than or equal to the input dimension.
\startmodif
Therefore, other feedback methods have been developed in the literature to overcome the uniform observability condition.
In most cases, the solution comes from allowing non-stationary (i.e. time-dependent) feedbacks.
This approach was originally proposed by Coron in \cite{coron1994stabilization} a year after \cite{TeelPraly1994}, by achieving local asymptotic stabilization through output feedback for general nonlinear systems employing a (periodic) time-varying strategy. Assuming the target is the origin and is an equilibrium of the system with null input, Coron's results necessitate the following two weak notions of observability:
\begin{description}
    \item[o2.i)] Given two different states, an input exists which leads to different outputs.
    \item[o2.ii)] If the input and output are identically null, then the state is at the target.
\end{description}
\stopmodif
The results obtained in \cite{coron1994stabilization} are local.
In other words, the basin of attraction for the output feedback loop only contains a neighborhood of the origin, and its size cannot be 
\startmodif
prefixed.
Inspired by this work, Shim and Teel obtained a semiglobal result by assuming (essentially):
\begin{description}
    \item[o3)] There exists an input such that any two different states lead to different outputs.
\end{description}
Their strategy has two modes that are periodically activated: in a first mode, a control making the system observable in a sufficiently short time, so that a high-gain observer can be designed to quickly estimate the state; in a second mode, the stabilizing control law based on the observer is applied.
Since the input making the system observable is fixed and periodically applied to the system, asymptotic convergence may be prevented, but still allowing a practical result (i.e. convergence in an arbitrarily small neighborhood of the target).

More recently, 
observation and control of non-uniformly observable systems is gathering attention because of appearances of the issue in numerous modern applications \cite{surroop2020adding, Rapaport, ludo, 9039752}.
The two modes strategy introduced in \cite{shim2003asymptotic} has encouraged researchers to develop dynamic output feedbacks in the framework of hybrid systems \cite{goedel2012hybrid}. This method has proven to be useful for systems presenting specific structures \cite{10264115, 9683664, BRIVADIS20239517}.
The present paper falls directly in line with the seminal efforts of \cite{coron1994stabilization} and \cite{shim2003asymptotic}, but we adopt a different viewpoint. Although our output feedback is periodic and realized through hybrid dynamics, we do not separate the control in two modes: each time step simultaneously allows to observe and control the system.
To do so, we propose a ``templated'' output feedback strategy, which aims to generalize ''sampled'' output feedback.
On each time interval, we sample the value of the observer-based control law at the beginning of the interval and, instead of ``holding'' it constant, we modulate it into the shape of a control template.
We pick as our control template an input making the system observable, and such that any rescaling or isometry of the template maintains this observability property.

To the best of our knowledge, this control strategy is new, and helps in the context of non-uniformly observable systems.
On the one hand, a high-gain observer can then be employed to estimate the state sufficiently fast on each time step.
On the other hand, since we require that any rescaling of the template still induce observability, it leads us to assume observability of the null input, i.e.
\begin{description}
        \item[o4)] If the input is identically null,  any two different states lead to different outputs.
\end{description}
Obviously, we have $o1)\Rightarrow o4) \Rightarrow o3) \Rightarrow o2.i)$ and $o4)  \Rightarrow o2)$. Note that assumption $o4)$ naturally excludes the case of systems that are unobservable at the target point (see e.g. \cite{brivadis2021new}).
Nevertheless, our observability condition is generic (i.e. satisfied for almost all nonlinear systems).
It may appear that the existence of a control template, on which the whole strategy hinges, is a restrictive property, but it is in fact equivalent to $o4)$, as far as analytic systems are concerned. This is to be put in perspective with the existence of inputs that make such systems observable, which was proved to be always the case. The proof we employ actually follows from an argument of this type made by Sussmann in 1979 \cite{MR0541865}.

\stopmodif

\subsubsection*{Notations}

Let $n$, $m$ and $p$ be positive integers, $\II$ be an interval of $\R$, and $\XX$ be an open set of $\R^n$ endowed with the Euclidean norm $|\cdot|$.
We denote by $PC^0(\II, \R^p)$ the space of piecewise continuous functions from $\II$ to $\R^p$, by $C^k(\XX, \R^m)$ the space of $k$-times continuously differentiable functions
from $\XX$ to $\R^m$ for $k\in\N\cup\{+\infty\}$, and by $C^\omega(\XX, \R^m)$ the space of analytic functions
from $\XX$ to $\R^m$.
For any $u\in PC^0(\II, \R^p)$, we denote by $\|u\|_{\infty} \defeq \sup_{t\in\II}|u(t)|$ the uniform norm of $u$.
If $\II$ is not open, we shall write that $u\in C^k(\II, \R^p)$ for some $k$ if $u$ is the restriction to $\II$ of some $C^k$ function defined on a larger open interval.
We denote by 
$\left.\frac{\dd^k}{\dd t^k}\right|_{t=t_0} u(t)$ the derivative of order $k$ of $u$ evaluated at $t_0\in\II$.
For any $h\in C^1(\R^n, \R^m)$, we denote by $\frac{\partial h}{\partial x}(x)$ the differential of $h$ evaluated at $x\in\R^n$.
If $h\in C^k(\R^n, \R^m)$, we denote by $h^{(k)}(x)$ the differential of order $k$ of $h$ evaluated at $x$.
For any set $\XX$ in a topological space, let
$\mathring\XX$,  $\overline\XX$, and $\partial\XX$ denote its interior, its closure, and its boundary, respectively.
For any $x\in\R^n$ and $r>0$, we denote by $B_{\R^n}(x, r)$ the open ball of $\R^n$ centered at $x$ of radius $r$.
We denote by $[x]_{m:(m+p)}\in\R^p$ the vector composed of coordinates of $x$ from index $m$ to $m+p$.
We denote by $\O(p)$ the orthogonal group of $\R^p$, endowed with the induced norm $\|\cdot\|$.

\section{Templated output feedback}
\label{sec:template}

\subsection{Problem statement and main assumptions}
\label{sec:def-Hq}

Consider an analytic control system
\begin{equation}\label{syst}
\left\{
\begin{aligned}
    &\dot x = f(x, u)\\
    &y = h(x)
\end{aligned}
\right.
\end{equation}
where $x$ in $\R^n$ is the state of the system,
$u$ in $\R^p$ is the control input, $y$ in $\R^m$ is the measured output
and $f:\R^n\times\R^p\to\R^n$ and $h:\R^n\to\R^m$ are analytic maps.
Since $f$ is continuous and locally Lipschitz continuous with respect to its first variable, uniformly with respect to the second, according to the Cauchy-Lipschitz theorem,
the Cauchy problem $\dot{x}(t)=f(x(t),u(t))$, $x(0)=x_0$
admits a unique maximal solution that we denote by $t\mapsto X(x_0,t,u)$ 
for any input $u\in PC^0(\R_+,\R^p)$.
Moreover, we denote by $T_e(x_0, u)\in\R_+^*\cup\{+\infty\}$ its maximal time of existence.
If $u\in PC^0(\II,\R^p)$ for some interval $\II\subset\R_+$ containing $0$, we abuse the notation $X(\cdot, t, u)$ for $t\in\II$ to denote any solution $X(\cdot, t, v)$ such that $v_{|\II}=u$.

In order to introduce notions of observability of \eqref{syst}, let us define the following functions by induction.
Let $H_0=h$, and for any positive integer $k$,
let $H_k:\R^n\times (\R^p)^{k}\to \R^m$
be such that for all $(x, \sigma)\in \R^n\times (\R^p)^{k+1}$,
\[
\begin{aligned}
H_{k+1}(x,\sigma_0,\sigma_1,\dots,\sigma_{k})=&
\frac{\partial H_k}{\partial x}(x,\sigma_0,\sigma_1,\dots,\sigma_{k-1})f(x,\sigma_0)
\\
&+
\sum_{i=0}^{k-1}
\frac{\partial H_k}{\partial \sigma_i}(x,\sigma_0,\sigma_1,\dots,\sigma_{k-1})\sigma_{i+1}
\end{aligned}
\]
For any positive time $T\geq 0$ and any integer $k\in\N$, we set $\mathcal{H}_k:\R^n\times[0, T]\times  C^{k}([0, T], \R^p)\to \R^{m(k+1)}$ to be the collection of $H_j$, $j$ from 0 to $k$, evaluated on the jets of an input $u$, that is
$$
\left[\mathcal{H}_k(x,t,u)\right]_{jm+1:(j+1)m}=H_j(x,u(t),u^{(1)}(t),\dots, u^{(j-1)}(t)),\qquad 0\leq j\leq k.
$$
If we consider an input\footnote{
Note that one can define $\HH_0$ and $\HH_1$ even for discontinuous inputs, and that for $k>1$, one only needs $u$ to be differentiable of order $k-1$ to define $\HH_k$. However, we assume $u$ to be $C^k$ to ease the reading.
In practice, in this paper, we only apply $\HH_k$ to inputs that are $C^\infty$ on $[0, T]$.
} $u\in C^k([0,T],\R^p)$, and, for any $k>0$, $\sigma:=(u(0),u'(0),\dots,u^{(k-1)}(0))\in (\R^{p})^k$, it is clear that
$H_k(x,\sigma)=
\left.
\frac{\dd^{k}}{\dd t^k}
\right|_{t=0}h(X(x_0,t,u)) $.
In other words,
$\HH_k(x, t, u)$ contains the jets at time $0$, of order $0$ up to $k$, of the output $y$ of the solution to \eqref{syst} initialized at $x$ and with input $u$. Note that since $f$ and $h$ are analytic, so are $x\mapsto H_k(x, \sigma)$ and $x\mapsto \HH_k(x, t, u)$ for all $k\in\N$, all $t\in[0, T]$, all $\sigma\in{\R^p}^k$ and all $u\in C^k([0, T], \R^p)$. Moreover, $(x,u)\mapsto\HH_k(x, t, u)$ is locally Lipschitz continuous for all $t\in[0, T]$.

\begin{definition}[Observability) (see, e.g., \cite{MR1862985}]
    We say that system \eqref{syst} is:
    \begin{enumerate}[(i)]
        \item \emph{observable} 
        over a set $\XX\subset\R^n$
        in time $\tau\in[0, T]$ for the input $u\in PC^0([0, T], \R^m)$ if, for all initial conditions $x_0\neq \tilde x_0\in\XX$, there exists $t\in[0, \bar \tau]$ such that
        $h(X(x_0, t, u))\neq h(X(\tilde x_0, t, u))$,
        where $\bar\tau$ is such that $X(x_0, s, u)$ and $X(\tilde x_0, s, u)$ are well-defined and lie in $\XX$ for all $s\leq\tau$.
        
        \item \emph{differentially observable} over a set $\XX\subset\R^n$ of order $k\in\N$
        for the input $u\in C^{k}([0, T], \R^m)$ if
        $x\mapsto \HH_k(x, 0, u)$ is injective over $\XX$, and \emph{strongly} differentially observable if moreover $x\mapsto \HH_k(x, 0, u)$ is an immersion over $\XX$, that is, $\frac{\partial \HH_k(\cdot, 0, u)}{\partial x}(x)$ is injective for all $x\in\XX$.
    \end{enumerate}
\end{definition}

Clearly, differential observability implies observability in any positive time.
Also, by definition of $\HH$, differential (resp. strong differential) observability of some order $k_0$ implies differential (resp. strong differential) observability of any order $k\geq k_0$.
Since $f$ and $h$ are analytic, we also have the following lemma showing that observability implies differential observability on compact sets for analytic inputs.

\begin{lemma}\label{lem:analytic}
If system \eqref{syst} is observable over a compact set $\KK_x\subset\R^n$
in some positive time $\tau$ for some input $u\in C^\omega([0, T], \R^m)$,
then
there exists $k\in\N$ such that
\eqref{syst} is also differentially observable over $\KK_x$ of order $k$.
\end{lemma}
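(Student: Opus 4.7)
The plan is to establish the conclusion in two steps: first, a pointwise version that rests on the identity theorem for analytic functions, and second, a uniformity upgrade via local Noetherianity and a compactness argument.

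Step 1 (pointwise). Fix any pair $x_0 \neq \tilde x_0 \in \KK_x$. Since $f$, $h$ and $u$ are analytic, the map $\phi \colon t \mapsto h(X(x_0, t, u)) - h(X(\tilde x_0, t, u))$ is real-analytic on a connected open interval around $0$. Observability over $\KK_x$ provides some $t \in [0, \bar\tau]$ with $\phi(t) \neq 0$, so the identity theorem forces some derivative of $\phi$ at $t = 0$ to be nonzero. In other words, there exists an index $k \in \N$ (depending on the pair) such that $\HH_k(x_0, 0, u) \neq \HH_k(\tilde x_0, 0, u)$. The real content of the lemma is to make $k$ uniform over $\KK_x$.

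Step 2 (local stabilization). Introduce the closed analytic subsets $V_k \defeq \{(x, \tilde x) \in \R^n \times \R^n : \HH_k(x, 0, u) = \HH_k(\tilde x, 0, u)\}$, which form a decreasing sequence whose intersection meets $\KK_x \times \KK_x$ exactly along the diagonal $\Delta_{\KK_x} \defeq \{(x, x) : x \in \KK_x\}$ by Step 1. The key claim is: for each $(a, b) \in \KK_x \times \KK_x$, there is an open neighborhood $U \subset \R^{2n}$ of $(a, b)$ and an integer $k(a, b)$ such that $V_k \cap U = V_{k(a, b)} \cap U$ for all $k \geq k(a, b)$. When $a \neq b$ this is immediate by closedness: some $V_k$ already misses $(a, b)$, hence a neighborhood. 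When $a = b$—the delicate case—I would invoke the Noetherian property of the local ring of real-analytic germs at $(a, a) \in \R^{2n}$, applied to the increasing sequence of ideals generated by the scalar components of $\HH_k(x, 0, u) - \HH_k(\tilde x, 0, u)$; stabilization of ideals forces stabilization of the germs of their zero sets.

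Step 3 (compactness conclusion). Extract a finite subcover $U_1, \dots, U_N$ of the compact set $\KK_x \times \KK_x$ from Step 2 and set $k_0 \defeq \max_i k(a_i, b_i)$. Then $V_k \cap (\KK_x \times \KK_x) = V_{k_0} \cap (\KK_x \times \KK_x)$ for every $k \geq k_0$; intersecting over $k$ yields $V_{k_0} \cap (\KK_x \times \KK_x) = \Delta_{\KK_x}$, i.e.\ $x \mapsto \HH_{k_0}(x, 0, u)$ is injective on $\KK_x$, which is exactly differential observability of order $k_0$. The main obstacle is the diagonal case of Step 2: it rests on the non-elementary Noetherianity of the local ring of real-analytic germs. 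Should one prefer to avoid this, the same conclusion follows by complexifying $f$, $h$ and $u$ and working with the Noetherian local ring of complex-analytic germs in $\mathbb{C}^{2n}$, then restricting to real points at the end.
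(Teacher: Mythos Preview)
Your approach mirrors the paper's: both define the decreasing analytic sets $V_k = \{(x_a, x_b) : \HH_k(x_a, 0, u) = \HH_k(x_b, 0, u)\}$, argue via analyticity of $t\mapsto h(X(x_0,t,u))$ that $\bigcap_k V_k \cap \KK_x^2$ is the diagonal, and conclude by showing the chain stabilizes on the compact $\KK_x^2$. The paper dispatches this last step in one line by citing \cite[Chapter~5, Corollary~1]{Narasimhan}, which states precisely that a decreasing sequence of analytic subsets is stationary on every compact set. Your Steps~2--3 are an attempt to reprove that theorem by hand via local Noetherianity and a finite cover.

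There is, however, a genuine gap in the diagonal case of Step~2. Noetherianity of the local ring $\mathcal{O}_{(a,a)}$ does give stabilization of the ideals $I_k$, hence of the \emph{germs} of the $V_k$ at $(a,a)$. But germ equality of $V_k$ and $V_{k_0}$ only yields, for each fixed $k \geq k_0$, some neighborhood $W_k$ (depending on $k$) where $V_k \cap W_k = V_{k_0} \cap W_k$; it does not by itself produce a single neighborhood $U$ valid for all $k$ simultaneously, which is exactly what your compactness argument in Step~3 consumes. Closing this requires the local structure theory of analytic sets---finiteness of irreducible components in a neighborhood of $(a,a)$, plus the fact that an analytic function vanishing on an open piece of an irreducible component vanishes on the whole component---so that one fixed $U$ can be chosen once the germ has stabilized. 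This is essentially the content of the theorem the paper cites, and it is not a consequence of local-ring Noetherianity alone; your complexification remark points in the right direction, but the same passage from ``stable germs'' to ``stable sets in a fixed neighborhood'' must still be argued in the complex category.
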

The proof of Lemma \ref{lem:analytic} is postponed to Appendix \ref{sec:tec2}.
Let $\XX\subset\R^n$ be an open set that is fixed for the rest of the paper. Our main objective is to stabilize system \eqref{syst} by means of a dynamic output feedback semi-globally over $\XX$, that is, with a basin of attraction arbitrarily large within $\XX$. In order to do so, we make three main assumptions.
The first two are observability assumptions for the null input $u=0$,
while the third one is a state-feedback stabilizability assumption.

\begin{assumption}\label{ass:obs0}
The null input $u=0$ makes system \eqref{syst} observable over $\XX$ in some time.
\end{assumption}

\begin{assumption}\label{ass:immersion0}
    For all $x\in \XX$, there exists $k_x\in\N$ such that $\frac{\partial \HH_{k_x}(\cdot, 0, 0)}{\partial x}(x)$ is injective.
\end{assumption}

Note that if system \eqref{syst} is strongly differentially observable over $\XX$ of some order for the null input $u=0$, then Assumptions \ref{ass:obs0} and \ref{ass:immersion0} clearly holds. The following converse result also holds.

\begin{lemma}\label{lem:hyp12}
Let $\KK_x$ be a compact subset of $\XX$.
Assumptions \ref{ass:obs0} and \ref{ass:immersion0} together imply that
system \eqref{syst} is strongly differentially observable over $\KK_x$ of some order for the null input.

\end{lemma}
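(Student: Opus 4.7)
The plan is to produce two orders separately, one guaranteeing injectivity of $x \mapsto \HH_k(x,0,0)$ on $\KK_x$ and the other guaranteeing pointwise injectivity of its Jacobian on $\KK_x$, and then to combine them into a single order by exploiting the monotonicity of both properties in $k$.

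For the injectivity part, the null input $u \equiv 0$ is analytic, and Assumption \ref{ass:obs0} provides observability of \eqref{syst} over $\XX$ in some positive time for this input. Applying Lemma \ref{lem:analytic} on the compact subset $\KK_x$ then yields an integer $k_1 \in \N$ such that $x \mapsto \HH_{k_1}(x,0,0)$ is injective on $\KK_x$.

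For the immersion part, I would run a compactness argument on the nested family
\[
U_k \defeq \left\{ x \in \XX \mid \tfrac{\partial \HH_k(\cdot,0,0)}{\partial x}(x) \text{ is injective}\right\}, \qquad k \in \N.
\]
Each $U_k$ is open in $\XX$, because injectivity of a linear map $\R^n \to \R^{m(k+1)}$ amounts to some $n \times n$ minor being nonzero, a continuous open condition. Moreover $U_k \subset U_{k+1}$, because by construction the Jacobian of $\HH_k(\cdot,0,0)$ sits as the top block of the Jacobian of $\HH_{k+1}(\cdot,0,0)$, and adding rows preserves injectivity of a linear map. Assumption \ref{ass:immersion0} states precisely that $\bigcup_k U_k = \XX$. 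Since $\KK_x$ is compact and covered by this increasing sequence of open sets, some $U_{k_2}$ already contains $\KK_x$.

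Setting $k \defeq \max(k_1,k_2)$ concludes the proof: injectivity of $\HH_{k_1}(\cdot,0,0)$ on $\KK_x$ and pointwise injectivity of the Jacobian of $\HH_{k_2}(\cdot,0,0)$ on $\KK_x$ both persist when passing to $\HH_k$, by the same submatrix reasoning. The main subtlety I anticipate is verifying that Assumption \ref{ass:obs0}—which delivers observability over the open set $\XX$ with the time-horizon convention from the definition—supplies exactly the hypothesis needed to invoke Lemma \ref{lem:analytic} on the compact $\KK_x$; this may require interposing a slightly larger compact neighborhood of $\KK_x$ inside $\XX$ in order to control trajectory exit times, but no new idea beyond that.
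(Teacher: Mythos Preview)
Your proposal is correct and follows essentially the same approach as the paper: obtain an injectivity order $k_1$ via Lemma~\ref{lem:analytic}, an immersion order $k_2$ by a compactness argument based on Assumption~\ref{ass:immersion0}, and take $k=\max(k_1,k_2)$. Your organization of the immersion step via the increasing open family $U_k=\{x:\partial\HH_k/\partial x\text{ injective}\}$ is a slightly cleaner packaging than the paper's version, which instead picks a neighborhood $U_x$ around each point, extracts a finite subcover $(U_{x_i})$, and sets $k_2=\max_i k_{x_i}$; the underlying content is the same. Your closing caveat about matching the observability hypothesis over $\XX$ with the compact-set hypothesis of Lemma~\ref{lem:analytic} is a fair observation (the paper simply asserts the implication), but no enlargement trick is actually needed: by analyticity, distinguishability of two points in $\KK_x$ before the trajectories leave $\XX$ already forces some time-derivative at $t=0$ to differ, which is all the proof of Lemma~\ref{lem:analytic} uses.
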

The proof of Lemma \ref{lem:hyp12} is postponed to Appendix \ref{sec:tec2}.
\begin{assumption}\label{ass:GES}
    There exists a feedback law $\lambda:\R^n\to\R^p$
    that is locally Lipschitz continuous on $\XX$
    such that \eqref{syst}
    in closed-loop with $u = \lambda(x)$
    is
    locally exponentially stable (LES) at the origin,
    with a basin of attraction containing $\XX$.
\end{assumption}

With no loss of generality, we assume that $\lambda(0) = 0$ and $h(0)=0$ in the following.
Note that in the case where $\XX=\R^n$, the vector field $x\mapsto f(x, \lambda(x))$ is globally asymptotically stable (GAS) and LES at the origin, but not necessarily globally exponentially stable (GES).
While the historic results on output feedback stabilisation only require asymptotic stability, exponential stability of the origin is necessary to achieve feedback stabilisation  via sampling techniques (even state feedback), as illustrated by Section~\ref{S:stabilisation_templated}.

According to the converse Lyapunov function theorem \cite[Remark 2.350]{PralyBresch2022a},
there exist $V\in C^1(\R^n, \R_+)$ 
such that for all compact set $\KK_x\subset\XX$, there exists
four positive constants $(\alpha_i)_{1\leq i\leq 4}$ such that, for all $\xi\in\KK_x$,
\begin{equation}\label{eq:Vexp}
\begin{aligned}
    \alpha_1 |\xi|^2 \leq V(\xi) &\leq \alpha_2 |\xi|^2,\\
    \left|\frac{\partial V}{\partial x}(\xi)\right| &\leq \alpha_3 |\xi|,\\
    \frac{\partial V}{\partial x}(\xi)f(\xi, \lambda(\xi)) &\leq - \alpha_4 |\xi|^2.
\end{aligned}
\end{equation}

\medskip

In order to stabilize \eqref{syst} by means of a dynamic output feedback, we employ an observer-based strategy, that is, we combine an observer (an online estimation algorithm of the state based on the measurement of the output) with the state-feedback law $\lambda$ \cite{AndrieuPraly2009}.
This strategy has been extensively studied under uniform observability assumptions, that is, observability for all inputs \cite{TeelPraly1994, TeelPraly1995, jouan, jouan1996finite}. Essentially, uniformly observable systems that are stabilizable by means of a state feedback are also semi-globally stabilizable by means of a dynamic output feedback.
However, uniform observability assumptions are not generic \cite[Chapter 3]{Gauthier_book}, contrary to the weaker Assumptions \ref{ass:obs0} and \ref{ass:immersion0}, since strong differential observability of order $2n+1$ for $u=0$ is generic by \cite[Chapter 4, Theorem 2.2]{Gauthier_book}.
Our goal is to obtain a nonlinear separation principle in line with \cite{TeelPraly1994, TeelPraly1995, jouan, jouan1996finite} in the case where the uniform observability assumption is replaced by Assumptions \ref{ass:obs0} and \ref{ass:immersion0}, i.e., observability assumptions only for the null input. Doing so, we obtain a generic nonlinear separation principle.
The price to pay
is that: $(i)$ in Assumption \ref{ass:GES}, the stabilizing state-feedback law is supposed to make the system LES instead of locally asymptotically stable (LAS) as in the uniformly observable case \cite{TeelPraly1994, TeelPraly1995, jouan, jouan1996finite}; $(ii)$ we allow the dynamic output feedback to have hybrid dynamics.

\subsection{Hybrid dynamic output feedback design}
\label{S:Hybrid dynamic output feedback design}
Since we aim at semiglobal dynamic output feedback stabilization of \eqref{syst} in $\XX$, we design a hybrid dynamic output feedback law that stabilizes \eqref{syst} with a basin of attraction containing an arbitrarily large compact set $\KK_x\subset \XX$, and that prevents the state to escape some larger compact $\KK_x'\subset\XX$. With no loss of generality, we assume that
$\KK_x' = \{x\in\XX\mid V(x) \leq \bar r\}$ for some fixed $\bar r>0$, and that $\KK_x\subset\mathring\KK_x'$.
In the rest of the paper, $\KK_x$ and $\KK_x'$ are now fixed.
Our strategy relies on the use of a \emph{control template}.
Define $\bar\lambda = \max_{x\in \KK_x'}|\lambda(x)|$.
\begin{definition}[Control template]
    An input $v^*\in C^\infty([0, T], \R^p)$ is said to be a \emph{control template}
    of order $q\in\N$  
    if
    $v^*(0) = (1,0,\dots,0)$ and
    the map $x\mapsto \mathcal{H}_{q}(x,t,\mu \RR v^*)$ is an injective immersion over $\KK_x'$, for all $t\in [0,T]$, all $\mu \in [0,\bar\lambda]$ and all $\RR\in \O(p)$.
\end{definition}

\startmodif
Roughly speaking,
a control template is an input making the system strongly differentially observable over $\KK_x'$, and such that any rescaling ($\mu$) or isometry ($\RR$) of this input preserves this observability property.
\stopmodif
The key property of control templates is given by the following lemma.

\begin{lemma}\label{Ass_HG}
Let $v^*\in C^\infty([0, T], \R^p)$ be a control template of order $q\in\N$.
There exists $\phi\in C^0(\R^{m(q+1)}\times\R^p\times\R, \R^n)$ 
such that for all $x_0\in \KK_x$ and all  $(t,\mu, \RR)\in[0,T]\times[0, \bar \lambda]\times\O(p)$ the solution $(x, y)$ to system \eqref{syst} initialized at $x_0$ with input $u=\mu \RR v^*$ satisfies the following.
For all $t\in[0, T]$ such that $x(s)\in\KK'$ for all $s\in[0, t]$,
\begin{align}\label{eq:phi}
x(t)&=\phi(y(t),\dots,y^{(q)}(t),t,\mu,\RR).
\end{align}
Moreover, there exists a positive constant $\lip_\phi$ such that for 
all $(z_a,z_b)\in(\R^{m(q+1)})^2$ and all $(t,\mu,\RR)\in[0,T]\times[0, \bar \lambda]\times \O(p)$,
\begin{align}\label{eq:lip_phi}
|\phi(z_a,t,\mu,\RR)-\phi(z_b,t,\mu,\RR)|&\leq \lip_\phi|z_a-z_b|,
\end{align}
\end{lemma}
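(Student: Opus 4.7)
The plan is to invert the observation map on its image and then extend the inverse to the full space while preserving Lipschitz continuity.

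Set $\Phi_{t,\mu,\RR}(x) := \HH_q(x, t, \mu\RR v^*)$ and let $\KK := [0,T]\times[0,\bar\lambda]\times\O(p)$, which is compact. An easy induction on the recursion defining $H_j$ shows that along any solution of \eqref{syst} driven by $u = \mu\RR v^*$,
$$\bigl(y(t), y^{(1)}(t), \ldots, y^{(q)}(t)\bigr) = \HH_q(x(t), t, u) = \Phi_{t,\mu,\RR}(x(t))$$
as long as the trajectory is defined. By the definition of a control template, $\Phi_{t,\mu,\RR}$ is an injective immersion on $\KK_x'$ for every $(t,\mu,\RR)\in\KK$, so $\phi_0(z, t, \mu, \RR) := \Phi_{t,\mu,\RR}^{-1}(z)$ is well-defined on the compact set $\Omega := \{(z, t, \mu, \RR)\in\R^{m(q+1)}\times\KK : z \in \Phi_{t,\mu,\RR}(\KK_x')\}$, and \eqref{eq:phi} is satisfied on $\Omega$ whenever $x(s)\in\KK_x'$ for $s\in[0,t]$.

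The critical step is a uniform bi-Lipschitz estimate: I claim there exists $c > 0$ such that
$$|\Phi_{t,\mu,\RR}(x) - \Phi_{t,\mu,\RR}(x')| \geq c\,|x - x'|$$
for all $x, x'\in\KK_x'$ and $(t,\mu,\RR)\in\KK$. I would prove this by compactness-contradiction: if sequences $(x_n, x_n', t_n, \mu_n, \RR_n)$ violate this, pass to convergent subsequences with limit $(\bar x, \bar x', \bar t, \bar\mu, \bar\RR)$. If $\bar x \neq \bar x'$, this contradicts the injectivity of $\Phi_{\bar t, \bar\mu, \bar\RR}$. If $\bar x = \bar x'$, a Taylor expansion combined with a uniform positive lower bound on the smallest singular value of $\partial\Phi_{t,\mu,\RR}/\partial x$ — itself obtained by joint continuity in $(x,t,\mu,\RR)$, the immersion property, and compactness of $\KK_x'\times\KK$ — yields a contradiction. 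It follows that $\phi_0$ is $c^{-1}$-Lipschitz in $z$ uniformly in $(t,\mu,\RR)$; its joint continuity on $\Omega$ follows from the standard subsequence argument.

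For the final extension, I would use the McShane--Whitney formula coordinate-wise:
$$\phi_i(z, t, \mu, \RR) := \inf_{w\in\Phi_{t,\mu,\RR}(\KK_x')}\bigl[(\phi_0)_i(w, t, \mu, \RR) + c^{-1}|z - w|\bigr], \qquad i=1,\ldots,n,$$
and set $\phi := (\phi_1, \ldots, \phi_n)$. Each $\phi_i$ is $c^{-1}$-Lipschitz in $z$, so $\phi$ satisfies \eqref{eq:lip_phi} with $\lip_\phi = \sqrt{n}/c$. Joint continuity follows from the continuity of $\phi_0$ on $\Omega$ together with Hausdorff continuity of the set-valued map $(t,\mu,\RR)\mapsto \Phi_{t,\mu,\RR}(\KK_x')$. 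For parameters outside $\KK$, one extends by any continuous Lipschitz-in-$z$ formula, since the lemma's identity \eqref{eq:phi} is only required on $\KK$. The main obstacle I anticipate is the uniformity step: pointwise, the bi-Lipschitz property of an injective immersion on a compact set is classical, but transferring it to a single constant $c$ valid across all $(t,\mu,\RR)\in\KK$ is where the compactness-contradiction and the uniform singular value bound must work together. Everything else — the identification of $\phi_0$ on the image and the Lipschitz extension — is routine.
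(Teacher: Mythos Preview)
Your proposal is correct and follows essentially the same route as the paper. The paper's proof simply invokes its technical Lemma~\ref{lem:iminj}(iii), which packages exactly the two ingredients you identify: a uniform bi-Lipschitz bound for a compact-parameter family of injective immersions on a compact set, followed by the coordinate-wise McShane extension. The only minor difference is that the paper obtains the uniform constant $\rho$ constructively (via continuous functions $\rho_1$, $\rho_2$, $\eta_2$ minimized over the compact parameter space) rather than by your compactness--contradiction argument, but both are standard and equivalent here.
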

The proof of Lemma \ref{Ass_HG} is postponed to Appendix \ref{sec:tec2}.
In Theorem \ref{lem:sus_bis}, we state that control templates are generic under Assumptions \ref{ass:obs0} and \ref{ass:immersion0}. For the time being, let us focus on the design of a hybrid dynamic output feedback, based on the existence of a control template $v^*\in C^\infty([0, T], \R^p)$. Let $\phi$ be as in Lemma \ref{Ass_HG}.
For all $u_0\in\R^p$, define the nonempty set 
$$
\RRR_0(u_0) = \Big\{\RR\in\O(p)\mid \RR\begin{pmatrix}
|u_0|\\0\\\vdots\\0
\end{pmatrix} = u_0\Big\}.
$$
Let $\sat\in C^\infty(\R^n, \R^n)$ 
be such that $\sat(x) = x$ for all $x\in\KK_x'$ and $|\sat(x)| \leq 2\max_{x'\in\KK'_x}|x'|$ for all $x\in\R^n$.
Let $(c_i)_{0\leq i\leq q}\in\R^{q+1}$, $\dt>0$ and $\theta>0$ be observer parameters to be tuned later on.
We propose the following hybrid dynamic output feedback:
\begin{equation}\label{eq:closed-loop}
\left\{\begin{aligned}
&\left.\begin{aligned}
&\dot x = f(x, \mu \RR v^*(s))
\\
&\dot z_0
= z_1 + \theta c_0(h(x)-z_0)\\
&\dot z_1 = z_2 + \theta^2 c_1(h(x)-z_0) \\
&\ \ \vdots\\
&\dot z_{q-1} = z_{q} + \theta^{q} c_{q-1}(h(x)-z_0) \\
&\dot z_{q} = H_{q+1}(\sat(\phi(z, s, \mu, \RR)),\mu\RR v^*(s),\dots,\mu\RR(v^*)^{(q)}(s))\quad
\\&\qquad+ \theta^{q+1} c_{q}(h(x)-z_0)
\\
&\dot s = 1
,\quad\dot \mu = 0
,\quad\dot \RR = 0
\end{aligned}\right\}
&s\in[0, \Delta],
\\
\\
&\left.\begin{aligned}
&x^+ = x
\\
&z^+ = \HH_q(\sat(\phi(z, \Delta, \mu, \RR)), 0, \mu^+\RR^+v^*)\\
&s^+ = 0
\\
&\mu^+ = |\lambda(\sat(\phi(z, \Delta, \mu, \RR)))|
\\
&\RR^+ \in \RRR_0(\lambda(\sat(\phi(z, \Delta, \mu, \RR))))
\end{aligned}\hspace{2.8cm}\right\}
&s = \Delta.
\end{aligned}\right.
\end{equation}

\begin{figure}
    \centering
    \includegraphics[trim=100 140 150 120,clip,width=0.8\linewidth]{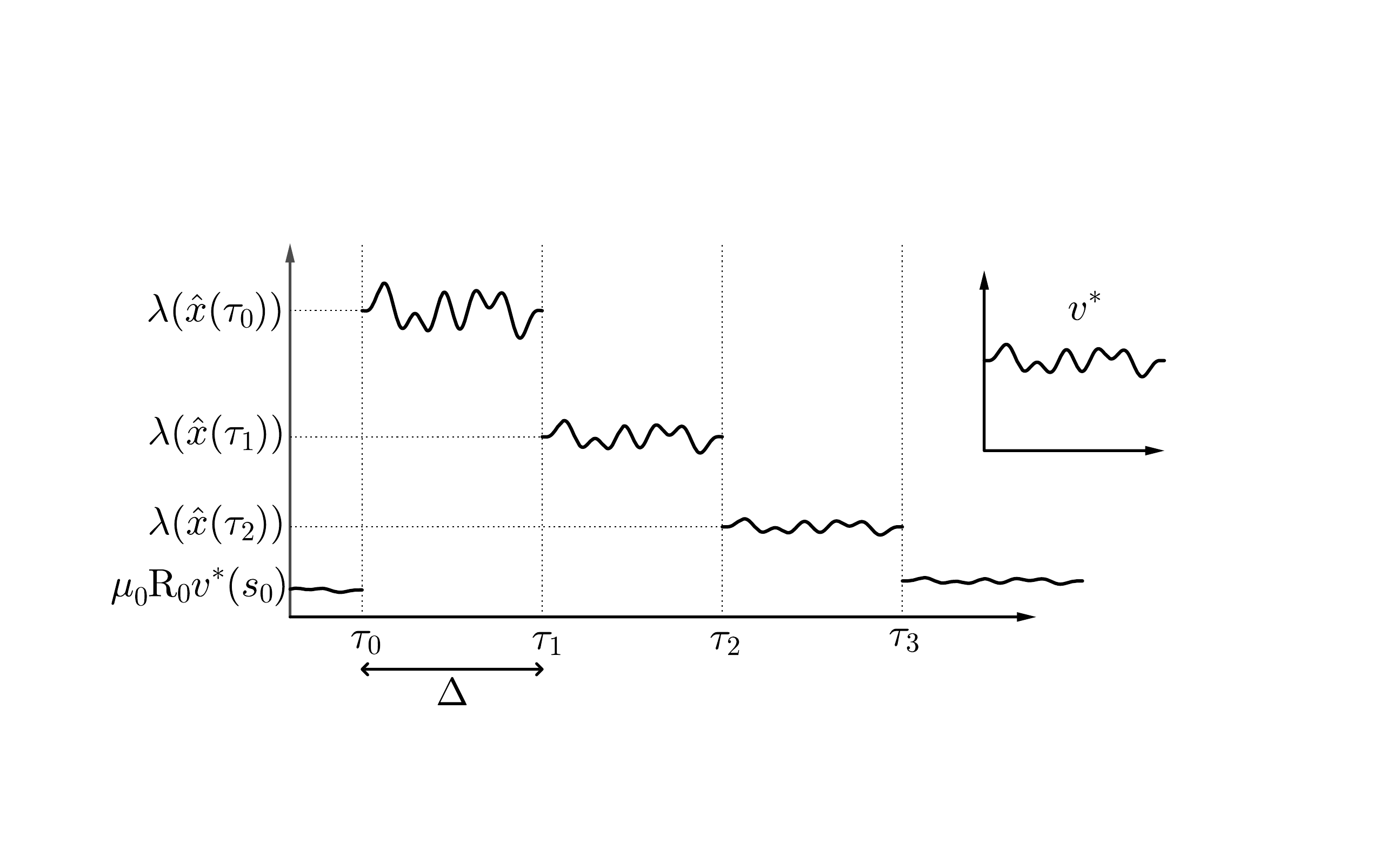}
    \caption{
    \startmodif
    Trajectory of the input $u=\mu\RR v^*$ applied to the system when using the templated output feedback strategy of \eqref{eq:closed-loop}.
    To lighten the notations, here we write $\hat x:= \phi(z, \dt, \mu, \RR)$.
    The illustration corresponds to the case of a one-dimensional input.
    After each jump, $u(\tau_i)=\lambda(\xhat(\tau_i))$. Then, over each time-interval, the input follows the shape of the control template $v^*$. Note that the input is also rescaled over each interval by $|\lambda(\xhat(\tau_i))|$, in order to guarantee that it remains close to $\lambda(\xhat)$ and $u\to0$ as $\xhat\to0$.
    \stopmodif
    }
    \label{fig}
\end{figure}

\startmodif

Let us explain the above output feedback dynamics.
The jump times are periodically triggered every time the timer $s$ reached $\dt$. Over each interval of length $\dt$, the control law applied to the system is $u:=\mu \RR v^*$, making the system observable due to the definition of control templates.
We employ a high-gain observer during the flow ($z$ variable) to estimate the state thanks to the knowledge of the output $h(x)$ and input $u$. The state estimation is given by $\xhat := \phi(z, \Delta, \mu, \RR)$, since $\phi$ satisfies \eqref{eq:phi} and $(z_0,\dots,z_q)$ approaches $(y,\dots,y^{(q)})$.
At each jump,
the scaling parameter $\mu$ and the isometry $\RR$ are updated, such that at the beginning of each time period, $\mu\RR v^*(0) = \lambda(\xhat)$. Since $\dt$ is to be chosen small enough, this guarantees that $u$ remains close to $\lambda(\xhat)$. Note that the $z$ dynamics also jump, because the immersion $\HH_q$ depends on the input $u$, which jumps from $\mu\RR v^*(\dt)$ to $\mu^+\RR^+v^*(0)$.
Figure~\ref{fig} illustrates the trajectory of the control law $\mu\RR v^*$ during the stabilization procedure.
Remark that the amplitude of the input over each time interval is proportional to its value at the beginning of the interval: this is the role of the scaling parameter $\mu$, and it is crucial to guarantee that $u\to0$ when $\xhat\to0$.

Note that in the case where $v^*$ is constant, the above closed-loop simply consists in a sample-and-hold of the dynamic output feedback based on the usual high-gain observer.
Indeed, since $v^*(0) = (1,0,\dots,0)$ and due to the definition of $\RRR_0$, 
the input $u:=\mu\RR v^*$ is piecewise constant and
the dynamics read as follows (with an abuse of notations on $\phi$ to make it depend on $u$ instead of $(\mu,\RR)$):

\begin{equation}\label{eq:closed-loop_sample}
\left\{\begin{aligned}
&\left.\begin{aligned}
&\dot x = f(x, v)
\\
&\dot z_0
= z_1 + \theta c_0(h(x)-z_0)\\
&\dot z_1 = z_2 + \theta^2 c_1(h(x)-z_0) \\
&\ \ \vdots\\
&\dot z_{q-1} = z_{q} + \theta^{q} c_{q-1}(h(x)-z_0) \\
&\dot z_{q} = H_{q+1}(\sat(\phi(z, s, v)),v)\quad
\\&\qquad+ \theta^{q+1} c_{q}(h(x)-z_0)
\\
&\dot s = 1
,\quad\dot v = 0
\end{aligned}\hspace{1cm}\right\}
&s\in[0, \Delta],
\\
\\
&\left.\begin{aligned}
&x^+ = x
\\
&z^+ = \HH_q(\sat(\phi(z, \Delta, v)), 0, v^+)\\
&s^+ = 0
\\
&v^+ = \lambda(\sat(\phi(z, \Delta, v)))
\end{aligned}\hspace{0.95cm}\right\}
&s = \Delta.
\end{aligned}\right.
\end{equation}

\stopmodif

\subsection{Main results}\label{sec:results}

\startmodif
Given the assumption of the existence of a control template, our first result is that 
\startmodifVA
the output feedback strategy proposed above successfully achieves semiglobal dynamic output feedback stabilization under the assumptions of exponential stabilizability and 
\startmodif
strong differential
\stopmodif
observability at the target.
\stopmodif
\stopmodif

\begin{theorem}[Output feedback stabilization theorem]\label{th:main}
    Suppose that Assumptions \ref{ass:obs0}, \ref{ass:immersion0} and \ref{ass:GES} are satisfied. 
    \startmodifVA
    Assume there exists $v^*\in C^\infty([0, T], \R^p)$ a control template of order $q\in\N$.
    \stopmodif
    Then,
    there exist 
    \startmodif
    $\dt^*\in(0, T]$ 
    \stopmodif
    such that for all $\dt\in(0, \dt^*)$,
    for all $\KK_z\subset(\R^{m})^{q+1}$,
    for all $(c_i)_{0\leq i\leq q}\in\R^{q+1}$ that are coefficient of a Hurwitz polynomial,
    there exists $\theta^*\geq1$ such that for all $\theta>\theta^*$,
    the set $\{0\}\times\{0\}\times[0, \dt]\times\{0\}\times\O(p)$
    is LES for system \eqref{eq:closed-loop}, with basin of attraction containing
    $\KK_x\times\KK_z\times[0, \dt]\times[0, \bar\lambda]\times\O(p)$.
\end{theorem}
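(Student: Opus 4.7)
The plan is to combine a high-gain observer analysis on each sampling interval with a sample-and-hold Lyapunov argument on the state, and to chain the two estimates across jumps. The role of the control template is to guarantee that on every interval $[k\Delta,(k+1)\Delta]$ the input $u=\mu\RR v^*$ makes the system strongly differentially observable over $\KK_x'$, so that the inversion map $\phi$ of Lemma~\ref{Ass_HG} is defined and Lipschitz with uniform constant $\lip_\phi$ from \eqref{eq:lip_phi}. I would then set up rescaled observer coordinates $e_i=\theta^{-i}(z_i-y^{(i)})$ for $0\leq i\leq q$, for which
\[
\dot e=\theta A_c e+\theta^{-q}B_q\bigl[H_{q+1}(\sat(\phi(z,s,\mu,\RR)),\ldots)-H_{q+1}(x,\ldots)\bigr],
\]
where $A_c$ is the companion matrix of the Hurwitz polynomial built from $(c_i)$ and $B_q=(0,\ldots,0,1)^\top$. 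Since $x=\phi((y,\ldots,y^{(q)}),s,\mu,\RR)$ on $\KK_x'$, combining \eqref{eq:lip_phi} with local Lipschitz regularity of $H_{q+1}$ on the relevant compact set of state/input jets bounds the perturbation by $L\,\theta^q|e|$ with $L$ uniform in $(\theta,\mu,\RR)$. With $V_o(e)=e^\top Pe$, $PA_c+A_c^\top P=-I$, this yields $\dot V_o\leq -(\theta-\theta_0)V_o/\|P\|$ for $\theta$ large, hence an exponential decay of $|e|$ at rate $\mathrm{O}(\theta)$ during each flow.

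For the state, between jumps $u(t)=\mu\RR v^*(s(t))$ with the update $\mu^+\RR^+v^*(0)=\lambda(\sat(\hat x^-))$ where $\hat x^-=\phi(z^-,\Delta,\mu^-,\RR^-)$, and $\hat x$ is continuous across the jump by construction of the reset. By smoothness of $v^*$, the bound $\mu\leq\bar\lambda$ and local Lipschitz continuity of $\lambda$ and $f$ on $\KK_x'$, one gets
\[
|u(t)-\lambda(x(t))|\leq L_\lambda\,|\hat x(t)-x(t)|+C_v\,\Delta.
\]
Plugging into \eqref{eq:Vexp} and using $|\hat x-x|\leq \lip_\phi|\tilde z|$ gives $\dot V(x)\leq -\tfrac{\alpha_4}{2}|x|^2+C_1|\tilde z|^2+C_2\Delta^2$, which after integration over one period yields
\[
V(x((k+1)\Delta))\leq \e^{-\gamma\Delta}V(x(k\Delta))+C_3\sup_{[k\Delta,(k+1)\Delta]}|\tilde z|^2+C_4\Delta^3.
\]

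At each jump, the reset $z^+=\HH_q(\sat(\hat x^-),0,\mu^+\RR^+ v^*)$ is designed so that, as long as $\hat x^-\in\KK_x'$ and hence $\sat(\hat x^-)=\hat x^-$, one has $z^+-(y,\ldots,y^{(q)})(k\Delta^+)=\HH_q(\hat x^-,0,\cdot)-\HH_q(x^-,0,\cdot)$, bounded by $L_{\HH}|\hat x^- -x^-|\leq L_{\HH}\lip_\phi|\tilde z(k\Delta^-)|$; in rescaled coordinates this reads $|e(k\Delta^+)|\leq L_{\HH}\lip_\phi\theta^q|e(k\Delta^-)|$. Combined with the continuous-time decay, the per-period contraction factor on $|e|$ is $L_{\HH}\lip_\phi\theta^q\kappa_0\,\e^{-(\theta-\theta_0)\Delta/(2\|P\|)}$, which becomes strictly less than $1$ once $\theta$ is large enough at fixed $\Delta$. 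Defining a composite Lyapunov function $W_k=V(x(k\Delta))+\beta|e(k\Delta^+)|^2$ and tuning $\beta$, $\theta$ appropriately then gives $W_{k+1}\leq\rho\,W_k$ for some $\rho<1$, which yields LES of the hybrid attractor $\{0\}\times\{0\}\times[0,\Delta]\times\{0\}\times\O(p)$ with the announced basin.

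The main technical obstacle I anticipate is the invariance bootstrap: the Lyapunov estimates above only hold on $\KK_x'$, while the initial observer error $z\in\KK_z$ can be arbitrarily large and hence produce a very inaccurate feedback during the first periods. This is precisely why $\sat$ appears in \eqref{eq:closed-loop} (it caps $|\mu|\leq\bar\lambda$ and the feedback magnitude), and why $\KK_x\subset\mathring\KK_x'$ is chosen as a strict sublevel set of $V$: the excursion of $V(x)$ per period is then $\mathrm{O}(\Delta)$ even when the observer is misaligned, while $|e|$ already contracts by the factor $L_{\HH}\lip_\phi\theta^q\kappa_0\,\e^{-\theta\Delta/(2\|P\|)}\ll 1$ per period. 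Closing this two-scale induction, so that $V(x(k\Delta))$ never crosses the level $\bar r$ defining $\KK_x'$ uniformly in the initial $z\in\KK_z$, is the main bookkeeping step of the proof.
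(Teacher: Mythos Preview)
Your observer analysis (rescaled error, companion matrix, per-period contraction across the jump reset) is essentially the same as the paper's Lemma~\ref{lem:exp_e}, and your bookkeeping of the invariance bootstrap matches the paper's use of $\sat$ and the sublevel set $\KK_x'$.

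The genuine gap is in your state estimate. You write
\[
|u(t)-\lambda(x(t))|\leq L_\lambda\,|\hat x(t)-x(t)|+C_v\,\Delta,
\]
with $C_v\Delta$ a constant coming from ``the bound $\mu\leq\bar\lambda$ and smoothness of $v^*$''. But a residual of size $C_v\Delta$ that does \emph{not} vanish with $|x|$ propagates to $C_2\Delta^2$ in $\dot V$ and $C_4\Delta^3$ in your per-period inequality, so the discrete recursion is $W_{k+1}\leq\rho W_k+C_4\Delta^3$ rather than $W_{k+1}\leq\rho W_k$. This yields only practical stability (convergence to an $O(\Delta)$-ball), not LES of the origin, which is precisely the phenomenon the templated feedback is designed to avoid.

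The missing idea is that the template is \emph{rescaled} by $\mu=|\lambda(\sat(\hat x(\tau_i)))|$, so the drift of the input over one period is not $O(\Delta)$ but $O(\mu\Delta)=O(|\hat x(\tau_i)|\Delta)$. The paper exploits this (Lemmas~\ref{L:feedbackdrift} and~\ref{L:gap_output_feedback}) to obtain
\[
|\mu\RR v^*(s)-\lambda(x(t))|\leq \alpha_5(\Delta)\,|x(t)|+\alpha_7(\Delta)\,|e(\tau_i)|,
\]
with $\alpha_5(\Delta)\to0$; the control error is proportional to $|x|$, not just bounded. Getting this requires a Grönwall argument relating $\sup_{[\tau_i,t]}|x|$ back to $|x(t)|$ (equation~\eqref{eq:V5} in the paper), together with the identity $\mu\RR v^*(0)=\lambda(\sat(\hat x(\tau_i)))$ and $|v^*(s)-v^*(0)|\leq\lip_{v^*}\Delta$. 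Once you replace $C_v\Delta$ by $\alpha_5(\Delta)|x|$, your Lyapunov inequality becomes $\dot V\leq -(\alpha_4-\alpha_3\lip_f\alpha_5(\Delta))|x|^2+C|e(\tau_i)|^2$, the spurious $\Delta^3$ term disappears, and the rest of your argument goes through.
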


The proof of Theorem \ref{th:main} is the goal of Section \ref{sec:thmain}.
In the case where the control template is constant, this result shows that sampling-and-holding the control periodically over small time intervals is sufficient to achieve a nonlinear separation principle. We exploit the same high-gain observer based strategy than the literature \cite{TeelPraly1994, TeelPraly1995, jouan, jouan1996finite} that used to require uniform observability, whereas we only rely on observability at the target.

Because control templates are not necessarily constant (for example, almost all bilinear systems admit constant inputs making unobservable, as roots of the characteristic polynomial of the observability matrix \cite[Theorem 2.6]{brivadis2023output}),
our method proposes a generalization of the sample-and-hold strategy (namely, \eqref{eq:closed-loop} extends \eqref{eq:closed-loop_sample}). After sampling the control (the input is computed at each sampling time by composing the stabilizing feedback law with the observer), we propose, instead of ``holding'' it constant during a time $\dt$, to follow the shape of the control template $v^*$ from this starting point (up to a dilation $\mu$ and an isometry $\RR$ that preserve observability).

The natural remaining question is the existence of control templates, and how generic they are.
We propose to show their genericity with the following second main result, that is an extension of Sussmann's universality theorem \cite{MR0541865}.

\begin{theorem}[Universality theorem]\label{lem:sus_bis}
Suppose that Assumptions \ref{ass:obs0} and \ref{ass:immersion0} are satisfied.
Let $\KK$ be a compact subset of $\XX$.
Let $\T>0$ and let $\UU_{\KK}$ be the set of inputs $v$ in $C^\infty([0, \T], \R^p)$ (endowed with the compact-open topology)
for which there exist $T\in (0,\T]$, $q$ integer, such that the map $x\mapsto \mathcal{H}_{q}(x,t,\mu \RR v)$ is an injective immersion over $\KK$, for all $t\in [0,T]$, all $\mu \in [0,1]$ and all $\RR\in \O(p)$. Then
\begin{enumerate}[(i)]
    \item $\UU_{\KK}$  contains a countable intersection of open and dense subsets of $C^\infty([0, \T], \R^p)$;
    \label{sus1}
    \item the restriction of $\UU_{\KK}$ to analytic inputs, $\UU_{\KK}\cap C^{\omega}([0, \T], \R^p)$, is dense in $C^\infty([0, \T], \R^p)$. 
\end{enumerate}
\end{theorem}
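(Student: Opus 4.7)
The plan is to adapt Sussmann's universality argument via a Baire category framework applied to the parameter family $(t, \mu, \RR) \in [0, T] \times [0, 1] \times \O(p)$. First, Lemma \ref{lem:hyp12} applied to the compact $\KK$ yields an order $k_0 \in \N$ for which $x \mapsto \HH_{k_0}(x, 0, 0)$ is an injective immersion on $\KK$; in particular, since $\mu \RR v \equiv 0$ when $\mu = 0$, the zero input already realizes the desired property at $\mu = 0$ for every $v$. The new difficulty compared with Sussmann's 1979 statement, and the main obstacle, is to guarantee the injective immersion property uniformly across the whole parameter family: the property must hold across the positive-dimensional compact group $\O(p)$, the rescaling interval $[0, 1]$, and the time interval $[0, T]$ simultaneously.

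For (i), I would express $\UU_\KK = \bigcup_{T \in (0, \T] \cap \mathbb{Q}} \bigcup_{q \in \N} \UU_\KK^{q, T}$, where $\UU_\KK^{q, T}$ denotes the set of $v \in C^\infty([0, \T], \R^p)$ for which $x \mapsto \HH_q(x, t, \mu \RR v)$ is an injective immersion on $\KK$, uniformly in $(t, \mu, \RR) \in [0, T] \times [0, 1] \times \O(p)$. To handle global injectivity without the diagonal obstruction, I would decompose via the compact sets $\KK_n = \{(x_1, x_2) \in \KK^2 : |x_1 - x_2| \ge 1/n\}$, imposing $\HH_q(x_1, t, \mu\RR v) \neq \HH_q(x_2, t, \mu\RR v)$ on each $\KK_n \times [0, T] \times [0, 1] \times \O(p)$ together with injectivity of $\partial_x \HH_q(\cdot, t, \mu\RR v)$ on $\KK$; by compactness and the inverse function theorem, these combine to recover injective immersion on all of $\KK$. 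Openness of the resulting sets in $C^\infty$ is immediate from compactness of the parameter sets and joint continuity of $\HH_q$. Density is the Sussmann core: given $v_0 \in C^\infty$ and $\varepsilon > 0$, one constructs a perturbation $w$ with $\|w\|_{C^q} < \varepsilon$ so that $v_0 + w$ lies in the open set. Taking $q$ large enough that $m(q+1) > 2n + 2 + \dim \O(p)$, and leveraging the analyticity of $(x, v) \mapsto \HH_q(x, t, \mu\RR v)$, a transversality/dimension-count argument shows that for a generic $w$ in a finite-dimensional perturbation family, the augmented map $(x_1, x_2, t, \mu, \RR) \mapsto \HH_q(x_1, t, \mu\RR(v_0+w)) - \HH_q(x_2, t, \mu\RR(v_0+w))$ avoids $0$ on the compact $\KK_n \times [0, T] \times [0, 1] \times \O(p)$, and an analogous argument handles the rank condition. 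This is where analyticity of $f$ and $h$ together with a sufficiently large $q$ are jointly essential: the bad set becomes an analytic subvariety of the parameter space, to which generic perturbations can be made transverse.

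For (ii), since $\UU_\KK^{q, T}$ is open in $C^\infty([0, \T], \R^p)$ for the compact-open topology and analytic functions are dense in this space (e.g.\ via Fourier or Taylor truncation), every $v \in \UU_\KK$ is a $C^\infty$-limit of analytic inputs that eventually lie in the same open neighborhood, yielding density of $\UU_\KK \cap C^\omega([0, \T], \R^p)$ in $C^\infty([0, \T], \R^p)$.
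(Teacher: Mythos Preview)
Your overall architecture for (i) --- Baire category over the off-diagonal compacts $\KK_n$, openness via compactness of the parameter set, density via a perturbation argument --- matches the paper's structure. Your argument for (ii) is in fact cleaner than the paper's: once (i) is established, $\UU_\KK = \bigcup_{q,T}\UU_\KK^{q,T}$ is open (each $\UU_\KK^{q,T}$ is open by the same compactness argument as Lemma~\ref{L:ZTKopen}) and dense (it contains a dense $G_\delta$), so intersecting with the dense subset $C^\omega\subset C^\infty$ immediately gives (ii). The paper instead runs a separate Baire argument on an infinite product of jet balls to build an analytic input directly.

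However, your density step in (i) has a genuine gap. The inequality $m(q+1) > 2n + 2 + \dim\O(p)$ compares the \emph{target} dimension of $\HH_q$ to the dimension of the domain $(x_a,x_b,t,\mu,\RR)$, and that alone buys nothing: the map is not generic among smooth maps into $\R^{m(q+1)}$, it is constrained to the specific form $\HH_q(x_a,\cdot)-\HH_q(x_b,\cdot)$. What is actually needed is that the set of \emph{bad jets}
\[
A'_k = \Big\{(x_a,x_b,\sigma)\in \Gamma_k : \HH_k(x_a,\sigma)=\HH_k(x_b,\sigma) \text{ or } \rank \tfrac{\partial \HH_k}{\partial x}(x_i,\sigma)<n \text{ for } i\in\{a,b\}\Big\}
\]
has codimension in $\Gamma_k=(\R^n)^2\times(\R^p)^k$ tending to $+\infty$ with $k$. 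This is the content of the paper's growth lemma (Corollary~\ref{C:growth2}), and it is \emph{not} automatic: it hinges on a jet extension lemma (Lemma~\ref{L:Our_extension}) asserting that any bad $k$-jet admits a good extension to some higher order. For the rank part of $A'_k$ this extension is precisely where Assumption~\ref{ass:immersion0} enters, via an analytic continuation argument along a trajectory that eventually reaches the null input. Once the codimension exceeds $2n+1+\dim\O(p)$, the paper projects $A'_k$ through the action $(\sigma,\mu,\RR)\mapsto \mu\RR\cdot\sigma$ and applies Sard to get a measure-zero image; the scaling $\mu\to 0$ needs its own treatment (a small ball around the zero jet is good by Assumptions~\ref{ass:obs0}--\ref{ass:immersion0}, and outside it one uses homogeneity). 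Your sketch ``the bad set becomes an analytic subvariety to which generic perturbations can be made transverse'' elides exactly these two non-trivial points: why the bad-jet codimension grows, and how the $(\mu,\RR)$-orbit of a generic jet stays out of it.
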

The proof of Theorem \ref{lem:sus_bis} is the goal of Section \ref{sec:thuniv}.
It is an extension of the universality theorem of Sussmann \cite{MR0541865}.
In short, we use the stronger assumption that the null input makes the system strongly differentially observable on $\KK$ (while \cite{MR0541865} relies on differential observability only),
under which we prove:
$(i)$ the immersion property in addition to the injectivity;
$(ii)$ uniformity of the genericity with respect to parameters lying in a compact set.
The choice of compact-open topology for this result is completely natural and follows from \cite{MR0541865}. Notions on this topology will be recalled when necessary. See also \cite[Chapter 3]{engelking1989general} for a general reference on the topic.

With Theorem \ref{lem:sus_bis}, control templates can be obtained in the following manner.
Let $\KK = \KK_x'$ and $v_\mathrm{ref}$ be the constant input equal to $(2\bar\lambda,0,\dots,0)$. By genericity of $\UU_{\KK}$, arbitrarily choose $v\in \UU_{\KK}$ such that $\|v-v_\mathrm{ref}\|_\infty < \bar\lambda$. Then $|v(0)|>\bar\lambda$.
Define $\mu_\mathrm{ref} = \frac{1}{|v(0)|}$
and pick $\RR_\mathrm{ref}\in\O(p)$ such that $\RR^{-1}_\mathrm{ref} \in \RRR_0(v(0))$.
Set $v^* = \mu_\mathrm{ref}\RR_\mathrm{ref}v|_{[0, T']}$.
Then $v^*(0) = (1,0,\dots,0)$, and by definition of $\UU_{\KK}$, $v^*$ is a control template. Note that, as any genericity result based on transversality theory, Theorem \ref{lem:sus_bis} does not propose an explicit construction of the control template, but rather state that almost all choice of template must be good. In applications, one can either apply this reasoning and choose an arbitrary template, or propose an ad-hoc analysis on the system to construct a specific control template and apply Theorem \ref{th:main} with this template.

Combining Theorem \ref{th:main} and \ref{lem:sus_bis}, we therefore obtain a generic nonlinear separation principle based on the use of control templates, under the assumptions of exponential stabilizability and strong differential observability at the target.

\section{Output feedback stabilization theorem}
\label{sec:thmain}

This section is devoted to the proof of Theorem \ref{th:main}.
We first ensure well-posedness of the closed-loop hybrid system \eqref{eq:closed-loop}. Then, we provide preliminary results concerning the high-gain observer convergence on the one hand and the templated state feedback stabilization procedure on the other hand. Finally, we combine these two results to prove the stability of the resulting templated output feedback procedure.

\subsection{Well-posedness}\label{sec:wp}

We use the framework of hybrid systems developed in \cite{goedel2012hybrid} to define solutions of \eqref{eq:closed-loop}.
Note that \eqref{eq:closed-loop} clearly satisfies the hybrid basic conditions \cite[Assumption 6.5]{goedel2012hybrid}.
Moreover, the jump times of \eqref{eq:closed-loop} are determined by the autonomous hybrid subdynamics
\begin{equation}\label{eq:s}
\begin{cases}
    \dot s = 1 & 
    s\in[0, \dt],
    \\
    s^+ = 0 & s = \dt.
\end{cases}
\end{equation}
Hence, the jump times $(\tau_i)_{i\in\N}$ are given by $\tau_i = \dt-s_0+i\dt$ for $i\in\N$.
Thus, any solution $(x,z,s,\mu,\RR):E\to\R^n\times\R^{m(q+1)}\times[0, \dt]\times\R_+\times\O(p)$ to \eqref{eq:closed-loop} is a hybrid arc defined on a hybrid time domain $E\subset\R_+\times\N$
of the form
$E = \cup_{i=0}^{I-1}([0, T_e)\cap[\tau_i, \tau_{i+1}], i)$
where either $T_e=+\infty$ and $I=+\infty$ (complete trajectory) or
$T_e\in\R_+^*$, $I\in\N^*$, and $\tau_{I-1} < T_e \leq \tau_{I}$ (non-complete trajectory).
Since the flow map of \eqref{eq:closed-loop} is singled-valued and locally Lipschitz continuous, the Cauchy problem associated to the flow map admits a unique maximal solution of class $C^1$.
Thus,
for all parameters $\dt>0$, $(c_i)_{0\leq i\leq q}\in\R^{q+1}$ and $\theta>0$,
and
for each initial condition $(x_0,z_0,s_0,\mu_0,\RR_0)\in\R^n\times\R^{m(q+1)}\times[0, \dt]\times\R_+\times\O(p)$,
\eqref{eq:closed-loop}
admits a unique maximal solution 
$(x,z,s,\mu,\RR):E\to\R^n\times\R^{m(q+1)}\times[0, \dt]\times\R_+\times\O(p)$
such that
$t\mapsto(x,z,s,\mu,\RR)(t, i)$ is $C^1$ for each $i$.
Moreover, if $(x,z,s,\mu,\RR)$ remains bounded, then the trajectory is complete, i.e. $E = \cup_{i\in\N}([\tau_i, \tau_{i+1}], i)$.
In order to shorten the notations, we shall write
$(x,z,s,\mu,\RR)(t):=(x,z,s,\mu,\RR)(t, i)$ for all $(t,i)\in E$ such that $\tau_i < t \leq \tau_{i+1}$
and
$(x,z,s,\mu,\RR)^+(\tau_{i}):=(x,z,s,\mu,\RR)(\tau_{i}, i)$.

\subsection{High-gain observer with control templates}

In view of the definition of control templates in combination with Lemma \ref{Ass_HG}, the following Lemma follows directly from the usual results on high-gain observers (see e.g., \cite[Theorem 4.1]{bernard2019observer}, \cite[Theorem 6.1]{BERNARD2022224}, \cite[Chapter 6.2]{Gauthier_book}, \cite{256352}).
It will be used in the proof of Theorem \ref{th:main} to tune the convergence speed of the observer by picking $\theta$ sufficiently large.

\begin{lemma}[\hspace{-0.0001em}\protect{\cite[Theorem 4.1]{bernard2019observer}}]
\label{lem:hg}
Let $v^*\in C^\infty([0, T], \R^p)$ be a control template of order $q\in\N$.
For all $(\mu, \RR)\in [0, \bar\lambda]\times\O(p)$,
and all $(c_i)_{0\leq i\leq q}\in\R^{q+1}$ that are coefficient of a Hurwitz polynomial,
there exists $\theta^*_1\geq1$, $c\geq1$ and $\omega>0$ such that for all $\theta>\theta^*_1$,
the solutions of
\begin{equation}\label{obs:hg}
\left\{\begin{aligned}
&\dot x = f(x, \mu\RR v^*(t))
\\
&\dot z_0
= z_1 + \theta c_0(h(x)-z_0)\\
&\dot z_1 = z_2 + \theta^2 c_1(h(x)-z_0) \\
&\ \ \vdots\\
&\dot z_{q-1} = z_{q} + \theta^{q} c_{q-1}(h(x)-z_0) \\
&\dot z_{q} = H_{q+1}(\sat(\phi(z, s, \mu, \RR)),\mu\RR v^*(s),\dots,\mu\RR(v^*)^{(q)}(s)) + \theta^{q+1} c_{q}(h(x)-z_0)
\end{aligned}\right.
\end{equation}
are such that, for all $t\in[0, T]$,
\begin{equation}
    |e(t)|\leq c \theta^{q} \e^{-\theta \omega t} |e(0)|,
\end{equation}
where $e(t) := z(t) - \HH_q(x(t), t, \mu\RR v^*)$.
\end{lemma}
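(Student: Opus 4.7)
The plan is to reduce Lemma \ref{lem:hg} to a standard high-gain observer convergence estimate, by checking that the system \eqref{obs:hg} falls exactly in the canonical framework for which such estimates are available (as in the references cited just before the lemma). The main point is to verify the uniform Lipschitz bound on the nonlinear term, which is what the control template machinery (via Lemma \ref{Ass_HG}) delivers.

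First, I would introduce the error variable $e(t) = z(t) - \HH_q(x(t), t, \mu\RR v^*)$. Using the very definition of $\HH_q$ and $H_{q+1}$, the time derivative of the $j$-th block of $\HH_q(x, t, \mu\RR v^*)$ along a solution of $\dot x = f(x, \mu\RR v^*(t))$ equals the $(j+1)$-th block for $j<q$, and equals $H_{q+1}(x, \mu\RR v^*(t), \dots, \mu\RR(v^*)^{(q)}(t))$ for $j=q$. Subtracting from the $z$-dynamics in \eqref{obs:hg} yields
\begin{equation}
\dot e = (A - \theta K C)\, e + B\, \Delta(t),
\end{equation}
where $A$ is the block shift matrix, $C = (I_m, 0, \dots, 0)$, $B = (0, \dots, 0, I_m)^\top$, $K = (c_0 I_m, \theta c_1 I_m, \dots, \theta^q c_q I_m)^\top$ absorbs the high-gain scaling, and
\begin{equation}
\Delta(t) = H_{q+1}\!\bigl(\sat(\phi(z, s, \mu, \RR)), \mu\RR v^*(s), \dots\bigr) - H_{q+1}\!\bigl(x, \mu\RR v^*(s), \dots\bigr).
\end{equation}

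The second step is to bound $\Delta$ uniformly. On the set where $x\in\KK_x'$, Lemma \ref{Ass_HG} gives $\phi(\HH_q(x, s, \mu\RR v^*), s, \mu, \RR) = x$ together with the Lipschitz estimate \eqref{eq:lip_phi}, so $|\phi(z, s, \mu, \RR) - x| \le \lip_\phi |e|$. Because $\sat$ is globally Lipschitz and maps into a fixed compact set containing $\KK_x'$, and because $H_{q+1}$ is continuous hence locally Lipschitz on that compact set, a uniform Lipschitz constant $L$ exists such that $|\Delta(t)| \le L |e(t)|$ for all $(t,\mu,\RR)\in[0,T]\times[0,\bar\lambda]\times\O(p)$; compactness of all three parameter sets and continuity of every map involved is what makes $L$ independent of $(\mu,\RR)$. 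Observe that the saturation is essential here: it ensures the bound even if $z$ leaves any a priori compact set.

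Once this is in place, the remainder is the textbook high-gain argument. I would perform the rescaling $\eta_j = e_j/\theta^j$, which transforms the dynamics into $\dot\eta = \theta(\bar A - \bar K C)\eta + B \Delta/\theta^q$ with $\bar A - \bar K C$ Hurwitz by the assumption on $(c_i)$. Taking $P\succ 0$ the Lyapunov solution of $(\bar A - \bar K C)^\top P + P(\bar A - \bar K C) = -I$ and $W(\eta) = \eta^\top P \eta$, one gets $\dot W \le -\theta |\eta|^2 + 2|P|\,L\,|\eta|^2/\theta^q \cdot |\eta|^{-1}\cdot|\eta|$, which after a standard computation gives $\dot W \le -(\theta - 2|P|L)\, W/\lambda_{\max}(P)$. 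Choosing $\theta^*_1$ so that $\theta - 2|P|L \ge \theta/2$ for all $\theta>\theta^*_1$ yields $|\eta(t)| \le c'\, e^{-\omega\theta t}|\eta(0)|$, and reverting to $e$ costs only a $\theta^q$ factor on the initial condition, giving the stated estimate with $c = c'$ and $\omega$ independent of $\theta$.

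The only delicate point is the uniformity of the Lipschitz constant $L$ in $(\mu, \RR)$; everything else is routine once the error dynamics are written. Since $\O(p)$ is compact and $[0,\bar\lambda]$ is a bounded interval, this uniformity is automatic from the continuous dependence provided by Lemma \ref{Ass_HG}, so no genuine obstacle remains.
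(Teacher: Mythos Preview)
Your proposal is correct and matches the paper's approach exactly: the paper does not give an independent proof of this lemma but states that it ``follows directly from the usual results on high-gain observers'' once Lemma~\ref{Ass_HG} supplies the uniform Lipschitz bound on the nonlinearity, which is precisely the reduction you carry out. Your sketch of the error dynamics, the Lipschitz estimate on $\Delta$ via $\phi$ and $\sat$, and the rescaled Lyapunov argument is the standard proof behind the cited references, so there is nothing to add.
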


\subsection{Stabilization by means of a templated state feedback}
\label{S:stabilisation_templated}

In order to show that the observer-based dynamic output feedback stabilization strategy proposed in \eqref{eq:closed-loop} works, we need to show that it works in particular if the observer has converged, i.e. with a state-feedback.
When $v^*$ is constantly equal to $(1,0,\dots,0)$, the remaining closed-loop can be rewritten as a the following system with ``sample-and-hold'' input:
\begin{equation}\label{eq:closed-loop-state_const}
\left\{\begin{aligned}
&\left.\begin{aligned}
&\dot x = f(x, v)\hspace{1.25cm}
\\
&\dot s = 1
,\quad \dot v = 0
\end{aligned}\right\}
& s\in[0, \Delta],
\\
\\
&\left.\begin{aligned}
&x^+ = x
\\
&s^+ = 0
\\
&v^+ = \lambda(\sat(x))
\end{aligned}\qquad\right\}
& s = \Delta,
\end{aligned}\right.
\end{equation}
It is well-known that under Assumption \ref{ass:GES}, system \ref{eq:closed-loop-state_const} can be made LES with an arbitrarily large basin of attraction within $\XX$ by taking $\dt$ sufficiently small. 
For this reason (i.e., because we need robustness of the feedback law with respect to sample-and-hold inputs),
we require LES in Assumption \ref{ass:GES}, while usual nonlinear separation principles \cite{TeelPraly1994, TeelPraly1995, jouan, jouan1996finite} simply require LAS.
In the case where $v^*$ is non-constant, we need the following result to extend this robustness property in our context. Moreover, elements of its proof will be used {\it mutatis mutandis} in the proof of Theorem \ref{th:main}.

\begin{proposition}\label{lem:les}
    Under Assumption \ref{ass:GES},
    for any $T>0$, for any input $v^*\in C^1([0, T], \R^m)$ such that $v^*(0)=(1,0,\dots,0)$,
    there exists $\dt^*_0\in(0, T]$ such that, for all $\dt\in(0, \dt^*_0)$,
    the set $\{0\}\times[0, \dt]\times\{0\}\times\O(p)$ is LES for the closed-loop system
\begin{equation}\label{eq:closed-loop-state}
\left\{\begin{aligned}
&\left.\begin{aligned}
&\dot x = f(x, \mu \RR v^*(s))\qquad
\\
&\dot s = 1
,\quad\dot \mu = 0
,\quad\dot \RR = 0\hspace{0.45cm}
\end{aligned}\right\}
& s\in[0, \Delta],
\\
\\
&\left.\begin{aligned}
&x^+ = x
\\
&s^+ = 0
\\
&\mu^+ = |\lambda(\sat(x))|
\\
&\RR^+ \in \RRR_0(\lambda(\sat(x)))\hspace{1cm}
\end{aligned}\right\}
& s = \Delta,
\end{aligned}\right.
\end{equation}
with basin of attraction containing $\KK_x\times[0, \dt]\times[0, \bar\lambda]\times\O(p)$.
\end{proposition}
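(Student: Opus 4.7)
\textbf{Proof plan for Proposition \ref{lem:les}.}

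\emph{Setup.}
The plan is to use the Lyapunov function $V$ from \eqref{eq:Vexp} associated with $\KK_x'$ and show that it contracts at a uniform exponential rate at sampling times $\tau_i=\dt-s_0+i\dt$, provided $\dt$ is small enough. The key feature of the templated feedback is that at each jump time one has $\mu^+ \RR^+ v^*(0)=\lambda(\sat(x(\tau_i)))$ by the definition of $\RRR_0$ and since $v^*(0)=(1,0,\dots,0)$. So immediately after a jump, the applied input equals $\lambda(x(\tau_i))$ (assuming $x(\tau_i)\in\KK_x'$, so that $\sat(x(\tau_i))=x(\tau_i)$), and it only drifts away from this value at a rate controlled by the $C^1$ norm of $v^*$. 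Throughout, let $L_f$, $L_\lambda$, $L_{v^*}$ denote Lipschitz constants of $f$ (in both arguments), $\lambda$, and $v^*$ on the relevant sets.

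\emph{Between-jump estimates.}
Fix $i\geq 0$ and set $x_i:=x(\tau_i)$. Since $\mu^+=|\lambda(x_i)|\leq L_\lambda|x_i|$ and $\|\RR^+ v^*\|_\infty\leq\|v^*\|_\infty$, the input on $(\tau_i,\tau_{i+1}]$ satisfies $|u(s)|\leq L_\lambda \|v^*\|_\infty |x_i|$. A Grönwall estimate on $\dot x=f(x,u)$ (using $f(0,0)=0$ and local Lipschitzness) then yields $|x(s)-x_i|\leq K_0\dt|x_i|$ on $[\tau_i,\tau_{i+1}]$ for some constant $K_0$ depending only on $f$, $\lambda$ and $\|v^*\|_\infty$, provided $x(s)$ remains in $\KK_x'$. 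In particular, for $\dt$ small, $x_i\in\KK_x$ implies $x(s)\in\KK_x'$ on the whole interval. Moreover, writing
\[
u(s)-\lambda(x(s)) = \mu^+\RR^+\bigl(v^*(s-\tau_i)-v^*(0)\bigr) + \lambda(x_i)-\lambda(x(s)),
\]
we obtain $|u(s)-\lambda(x(s))|\leq (L_\lambda L_{v^*}+L_\lambda K_0)\dt|x_i|=: K_1\dt|x_i|$.

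\emph{Contraction of $V$ at sampling times.}
Decompose along the flow:
\[
\dot V(x(s)) = \frac{\partial V}{\partial x}(x(s))f(x(s),\lambda(x(s))) + \frac{\partial V}{\partial x}(x(s))\bigl[f(x(s),u(s))-f(x(s),\lambda(x(s)))\bigr].
\]
The first term is bounded by $-\alpha_4|x(s)|^2$ via \eqref{eq:Vexp}, and the second by $\alpha_3 L_f K_1\dt|x_i||x(s)|$. Using $|x(s)|\geq(1-K_0\dt)|x_i|$ and integrating over $[\tau_i,\tau_{i+1}]$, one gets
\[
V(x(\tau_{i+1}))-V(x(\tau_i)) \leq \dt|x_i|^2\bigl[-\alpha_4(1-K_0\dt)^2 + \alpha_3 L_f K_1\dt(1+K_0\dt)\bigr].
\]
For $\dt\leq\dt_0^*$ sufficiently small, the bracket is bounded above by $-\alpha_4/2$, and then $V(x(\tau_{i+1}))\leq(1-\frac{\alpha_4}{2\alpha_2}\dt)V(x(\tau_i))$, giving geometric decay of $V(x(\tau_i))$.

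\emph{Initial transient, invariance, and conclusion.}
The initial interval $[0,\tau_0]$ requires a separate bound since $\mu_0\in[0,\bar\lambda]$ need not scale with $|x_0|$: there, $|u(t)|\leq\bar\lambda\|v^*\|_\infty$ and Grönwall gives $|x(\tau_0)|\leq|x_0|+C\dt$, so $x(\tau_0)\in\KK_x'$ for $\dt$ small. The contraction analysis above then applies from $\tau_0$ onward, and $V$ decreases geometrically at each sampling time, ensuring $x$ remains in $\KK_x'$ and converges exponentially to $0$. Combined with the bound $|x(s)|\leq(1+K_0\dt)|x_i|$ between jumps and with $\mu(t)\leq L_\lambda|x(\tau_i)|$ for $t\in(\tau_i,\tau_{i+1}]$, $i\geq 0$, this yields exponential convergence of $(x,\mu)$ to $(0,0)$ in hybrid time for initial data in $\KK_x\times[0,\dt]\times[0,\bar\lambda]\times\O(p)$, i.e.\ the desired LES property. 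The main obstacle is the last part of Step 2: obtaining the crucial scaling $|u(s)-\lambda(x(s))|=O(\dt|x_i|)$, rather than $O(\dt)$, which is exactly what turns a practical-stability estimate into an exponential one and uses essentially that $\mu^+=|\lambda(x_i)|$ vanishes with $x_i$.
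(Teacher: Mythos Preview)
Your approach is essentially the same as the paper's: both decompose $\dot V$ into the stable term $\frac{\partial V}{\partial x}f(x,\lambda(x))$ plus the perturbation $\frac{\partial V}{\partial x}[f(x,u)-f(x,\lambda(x))]$, and both rely on the crucial scaling $|u-\lambda(x)|=O(\dt)\cdot|x|$. The only presentational difference is that the paper bounds $|u(s)-\lambda(x(s))|\leq\alpha_5(\dt)|x(s)|$ in terms of $|x(s)|$ (via an auxiliary rotation $\tilde\RR$ and a Grönwall argument, Lemma~\ref{L:feedbackdrift}) and then integrates a continuous differential inequality $\dot V\leq-(\alpha_4-\alpha_3\alpha_5\lip_f)|x|^2/\alpha_1\cdot V$, whereas you bound in terms of $|x_i|$ and obtain a discrete contraction $V(x(\tau_{i+1}))\leq(1-c\dt)V(x(\tau_i))$; these are equivalent since $|x(s)|$ and $|x_i|$ are comparable.

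One small loose end: your initial-transient bound $|x(\tau_0)|\leq|x_0|+C\dt$ suffices to guarantee $x(\tau_0)\in\KK_x'$, but it does not scale with $|(x_0,\mu_0)|$ and therefore does not by itself feed into the LES estimate. You should instead use $|u(t)|\leq|\mu_0|\|v^*\|_\infty$ in the Grönwall argument on $[0,\tau_0]$ to obtain $|x(\tau_0)|\leq C(|x_0|+\dt|\mu_0|)$ (the paper does this via a nonlinear Grönwall inequality in Lemma~\ref{L:beginning}), which then gives $V(x(\tau_0))\leq C'|(x_0,\mu_0)|^2$ and closes the LES bound.
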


We postpone the proof after two preliminary lemmas, but
regarding the existence of solutions to \eqref{eq:closed-loop-state}, we follow the same method as in Section \ref{sec:wp}. For all $\dt>0$
and
for each initial condition $(x_0,s_0,\mu_0,\RR_0)\in\R^n\times[0, \dt]\times\R_+\times\O(p)$,
\eqref{eq:closed-loop-state}
admits a unique maximal solution 
$(x,s,\mu,\RR):E\to\R^n\times[0, \dt]\times\R_+\times\O(p)$
such that
$t\mapsto(x,s,\mu,\RR)(t, i)$ is $C^1$ for each $i$.
Moreover, if $(x,s,\mu,\RR)$ remains bounded, then the trajectory is complete, i.e. $E = \cup_{i\in\N}([\tau_i, \tau_{i+1}], i)$.
In what follows, for a given maximal solution $x$ of \eqref{eq:closed-loop-state} $t^*$ denotes the escape time out of the compact set $\KK_x'$, $t^* = \inf\{t>0\mid x(t)\notin\KK_x'\}$ (with $\KK_x'$ as in Section~\ref{S:Hybrid dynamic output feedback design}).

According the converse Lyapunov function theorem \cite[Remark 2.350]{PralyBresch2022a},
there exist a Lyapunov function $V$ and four positive constants $(\alpha_i)_{1\leq i\leq 4}$ such that for all $\xi\in\KK_x'$, \eqref{eq:Vexp} is satisfied.
Let us give a first bound on the growth of the Lyapunov $V$ on the interval $[0,\tau_0)$, before the feedback law kicks in.

\begin{lemma}\label{L:beginning}
Under the assumptions of Proposition~\ref{lem:les}, assume $\tau_0>0$.
For all $t\in(0, \min(\tau_0, t^*))$.
\begin{align}\label{eq:V7bis}
    V(x(t))\leq \e^{\dt\frac{\alpha_3\lip_f}{\alpha_1}}\Big(\sqrt{V(x_0)}
    +\dt \frac{\lip_f \|v^*\|_{\infty}}{2\sqrt{\alpha_1}}
    |\mu_0|
    \Big)^2
\end{align}
with  $\lip_f$ denoting the Lipschitz constant of $(x,u)\mapsto f(x, u)$ over $\KK_x'\times\bar B_{\R^p}(0, \bar\lambda\|v^*\|_{\infty})$.
\end{lemma}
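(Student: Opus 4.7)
The plan is to reduce the claim to a scalar linear Gronwall inequality for $W(t) := \sqrt{V(x(t))}$ and then square. Before $\tau_0$ no jump has occurred, so the hybrid variables $\mu$ and $\RR$ are frozen at $(\mu_0, \RR_0)$, the timer reads $s(t) = s_0 + t$, and the input driving the $x$-dynamics is the bounded signal $u(t) = \mu_0 \RR_0 v^*(s_0+t)$, whose norm is at most $|\mu_0|\|v^*\|_\infty \leq \bar\lambda\|v^*\|_\infty$. On $[0, \min(\tau_0, t^*))$ the trajectory stays in $\KK_x'$ by definition of $t^*$, so the pair $(x(t),u(t))$ lies in the domain where $\lip_f$ is a joint Lipschitz constant for $f$. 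Combined with $f(0,0)=0$ (a consequence of $\lambda(0)=0$ and of the origin being an equilibrium of the closed loop under $\lambda$), this yields the pointwise estimate $|f(x(t),u(t))| \leq \lip_f(|x(t)| + |\mu_0|\|v^*\|_\infty)$.

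Inserting this into $\dot V = \tfrac{\partial V}{\partial x}(x)\,f(x,u)$ and using the Lyapunov-level bounds $|\tfrac{\partial V}{\partial x}(x)| \leq \alpha_3 |x|$ and $\alpha_1|x|^2 \leq V(x)$ from \eqref{eq:Vexp}, a direct computation produces an inequality of the form
\[
\dot V(x(t)) \leq c_1\, V(x(t)) + c_2\, \sqrt{V(x(t))}
\]
on $[0,\min(\tau_0,t^*))$, for positive constants $c_1,c_2$ whose expressions in terms of $\alpha_1,\alpha_3,\lip_f,\|v^*\|_\infty$ and $|\mu_0|$ are precisely those needed to reproduce \eqref{eq:V7bis}. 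Dividing by $2W$ transforms this into the affine scalar inequality $\dot W \leq \tfrac{c_1}{2}W + \tfrac{c_2}{2}$; the possible degeneracy at $W=0$ is handled by the standard regularisation $V_\eps = V + \eps$ and then letting $\eps \to 0$.

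The final step is a direct application of Gronwall to this affine ODE over an interval of length at most $\dt$, followed by the elementary bound $\int_0^t e^{\frac{c_1}{2}(t-s)}\,\dd s \leq t\, e^{\frac{c_1}{2}t}$ for the forcing integral, and then squaring both sides: this yields \eqref{eq:V7bis} verbatim. The argument is routine and its only delicate point is bookkeeping the constants through the square-root substitution; no substantive obstacle arises, since the definition of $t^*$ ensures that all the Lipschitz and Lyapunov bounds remain valid throughout the interval of analysis.
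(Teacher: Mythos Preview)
Your approach is correct and is essentially the paper's own argument: the paper bounds $\dot V$ in the same way to obtain the integral inequality \eqref{eq:Vr} and then invokes a nonlinear Gr\"onwall inequality (Perov \cite{perov1959k}; \cite[Theorem~21]{dragomir2003some}), whose standard proof is precisely your substitution $W=\sqrt{V}$ followed by linear Gr\"onwall and squaring. The only difference is cosmetic (integral versus differential formulation, and a black-box citation versus unpacking the square-root trick).
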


\begin{proof}
This result is an application of nonlinear Grönwall's inequality \cite{perov1959k} (see also \cite[Theorem 21]{dragomir2003some}).
First, for all $t\in(0, \min(\tau_0, t^*))$,
\begin{align}
    V(x(t))
    &\leq V(x_0) + \int_0^t \left|\frac{\partial V}{\partial x}(x(\tau))f(x(\tau), \mu_0\RR_0v^*(s(\tau))\right|\dd\tau
    \nonumber\\
    &\leq V(x_0) + \alpha_3\lip_f \int_0^t |x(\tau)|^2 \dd\tau + \lip_f \|v^*\|_{\infty} |\mu_0|  \int_0^t |x(\tau)| \dd\tau
    \nonumber\\
    &\leq V(x_0) + \frac{\alpha_3\lip_f}{\alpha_1} \int_0^t V(x(\tau)) \dd\tau + \frac{\lip_f \|v^*\|_{\infty}}{\sqrt{\alpha_1}}|\mu_0|  \int_0^t \sqrt{V(x(\tau)))} \dd\tau
    \label{eq:Vr}
\end{align}
Hence, according to the nonlinear Grönwall's inequality,
\[
    V(x(t))\leq \e^{t\frac{\alpha_3\lip_f}{\alpha_1}}\Big(\sqrt{V(x_0)}
    +t \frac{\lip_f \|v^*\|_{\infty}}{2\sqrt{\alpha_1}}
    |\mu_0|
    \Big)^2
\]
for all $t\in(0, \min(\tau_0, t^*))$. Hence the statement since $\tau_0\in [0,\Delta]$.
\end{proof}

Now we provide a natural intermediary result on templated state feedback, namely that after $\tau_0$, we can bound the gap between the state feedback and templated feedback at any time in the window $[\tau_i,\tau_{i+1})$ by reducing $\dt$. We also wish to relate this bound to the size of the state, in order to relate it to the Lyapunov function $V$. What is crucial here is that rather than keeping memory of $|x(\tau_i)|$, we can actually recover $|x(t)|$.
Let us introduce Lipschitz constants that appear in the proof of this fact. Here,
$\lip_{\lambda}$, $\lip_{v^*}$, $\lip_{\sat}$, and $\lip_{f_\lambda}$ denote the Lipschitz constants of $\lambda|_{\KK_x'}$, $v^*$, $\sat|_{\KK_x'}$, and $f(\cdot, \lambda(\cdot))|_{\KK_x'}$, respectively.

\begin{lemma}\label{L:feedbackdrift}
Under the assumptions of Proposition~\ref{lem:les},
there exists $\dt_1^*\in (0,T]$ and $\alpha_{5}:(0, \dt_1^*)\to\R_+^*$ such that $\alpha_{5}(\dt)\to0$ as $\dt\to0$ and if $\dt\in(0, \dt^*_1)$, then
for all $(t, i)\in E$ such that $\tau_i<t< \tau_{i+1}$ and $t<t^*$,
\begin{equation}\label{eq:VfinlemLud}
    |\mu(t)\RR(t)v^*(s(t)) - \lambda(x(t))|
    \leq \alpha_{5}(\dt)|x(t)|.
\end{equation}
\end{lemma}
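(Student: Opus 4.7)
The plan is to work on a single flow interval $(\tau_i, \tau_{i+1})$, where $\mu$ and $\RR$ are constant. By the reset map in \eqref{eq:closed-loop-state} and $x(\tau_i) \in \KK_x'$ (hence $\sat(x(\tau_i)) = x(\tau_i)$), together with $v^*(0) = (1, 0, \dots, 0)$ and the definition of $\RRR_0$, the post-jump values satisfy
\[
\mu(\tau_i^+) \RR(\tau_i^+) v^*(0) = \lambda(x(\tau_i)), \qquad \mu(\tau_i^+) = |\lambda(x(\tau_i))| \leq \bar\lambda.
\]
This lets me split, for any $t \in (\tau_i, \tau_{i+1})$ with $t < t^*$,
\[
\mu(t)\RR(t) v^*(s(t)) - \lambda(x(t)) = \mu(t)\RR(t)\bigl(v^*(s(t)) - v^*(0)\bigr) + \bigl(\lambda(x(\tau_i)) - \lambda(x(t))\bigr),
\]
and bound the two terms by $\lip_\lambda \lip_{v^*} \dt\, |x(\tau_i)|$ (using $\lambda(0) = 0$ and $\mu(t) = |\lambda(x(\tau_i))| \leq \lip_\lambda |x(\tau_i)|$) and by $\lip_\lambda |x(t) - x(\tau_i)|$, respectively.

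To control the drift $|x(t) - x(\tau_i)|$, I use that $f(0, 0) = 0$ (since $\lambda(0) = 0$ and the closed-loop origin is LES), which yields $|f(\xi, u)| \leq \lip_f(|\xi| + |u|)$ on $\KK_x' \times \bar B_{\R^p}(0, \bar\lambda \|v^*\|_\infty)$. Then
\[
|x(t) - x(\tau_i)| \leq \lip_f \int_{\tau_i}^t \bigl(|x(\sigma)| + \lip_\lambda \|v^*\|_\infty |x(\tau_i)|\bigr)\,\dd\sigma,
\]
and Grönwall's inequality gives $|x(t) - x(\tau_i)| \leq C_1 \dt\, e^{\lip_f \dt}\, |x(\tau_i)|$ with $C_1 = \lip_f(1 + \lip_\lambda \|v^*\|_\infty)$, a constant independent of $\dt$.

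The final step is to replace $|x(\tau_i)|$ by $|x(t)|$ in the resulting bound: the triangle inequality combined with the Grönwall estimate yields $|x(\tau_i)| \leq (1 - C_1 \dt\, e^{\lip_f \dt})^{-1} |x(t)|$ as soon as $\dt$ is small enough for $C_1 \dt\, e^{\lip_f \dt} < 1$; this determines $\dt_1^*$. Combining the bounds gives \eqref{eq:VfinlemLud} with
\[
\alpha_5(\dt) = \frac{\lip_\lambda \lip_{v^*} \dt + \lip_\lambda C_1 \dt\, e^{\lip_f \dt}}{1 - C_1 \dt\, e^{\lip_f \dt}},
\]
which tends to $0$ as $\dt \to 0$. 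The main subtlety of the argument lies precisely in this last step: a priori the natural bound is in terms of the sampled value $|x(\tau_i)|$, and it is only because the interval length $\dt$ is small that $x(t)$ and $x(\tau_i)$ remain comparable, allowing the bound to be rephrased in terms of the current state.
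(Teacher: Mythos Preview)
Your proof is correct and actually takes a cleaner route than the paper's own argument. The key difference is in the initial decomposition: you exploit directly the exact identity $\mu(t)\RR(t)v^*(0)=\lambda(x(\tau_i))$ that holds on each flow interval (coming from the reset map, $v^*(0)=(1,0,\dots,0)$, and the definition of $\RRR_0$), and split
\[
\mu\RR v^*(s)-\lambda(x)=\mu\RR\bigl(v^*(s)-v^*(0)\bigr)+\bigl(\lambda(x(\tau_i))-\lambda(x(t))\bigr).
\]
The paper instead introduces an auxiliary rotation $\tilde\RR(\lambda(x(t)))\in\RRR_0(\lambda(x(t)))$ via Lemma~\ref{lem:R0} and passes through the intermediate quantity $|\lambda(x(t))|\tilde\RR v^*(s)$, which is heavier. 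Your Gr\"onwall step also differs slightly: you use $f(0,0)=0$ to bound $|f(\xi,u)|\leq\lip_f(|\xi|+|u|)$ directly, whereas the paper compares $f(x,\mu\RR v^*(s))$ with $f(x,\lambda(x))$ and feeds the decomposition back into the integral before applying Gr\"onwall. Both work; yours is more elementary here.

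The reason the paper takes the longer path is reusability: in the output-feedback Lemma~\ref{L:gap_output_feedback}, the exact identity $\mu\RR v^*(0)=\lambda(x(\tau_i))$ fails because $\mu,\RR$ are reset using the \emph{estimated} state, and one genuinely needs Lemma~\ref{lem:R0} to compare $\mu\RR$ with $|\lambda(x(\tau_i))|\bar\RR(\lambda(x(\tau_i)))$. So the paper's proof of Lemma~\ref{L:feedbackdrift} is written to parallel that later argument, at the cost of some unnecessary machinery in the state-feedback case.
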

\begin{proof}
Using Lemma \ref{lem:R0}, for all $t\in(\tau_0, t^*)$, pick $\tilde\RR(\lambda(x(t)))\in \RRR_0(\lambda(x(t)))$, so that
$\||\lambda(x(t))|\tilde\RR(\lambda(x(t)))-\mu(t)\RR(t)\|\leq 2|\lambda(x(t)) - \mu(t)|$.
Then, for all $(t, i)\in E$ such that $\tau_i<t< \tau_{i+1}$ and $t<t^*$,
\begin{align}
    |\mu\RR v^*(s) - \lambda(x)|
    &\leq \big|\mu \RR v^*(s ) - |\lambda(x )|\tilde\RR(\lambda(x ))v^*(s )\big| 
    + \big| |\lambda(x )|\tilde\RR(\lambda(x ))v^*(s ) - \lambda(x )\big|
    \nonumber\\
    &\leq |v^*(s )| \, \big\|\mu \RR  - |\lambda(x )|\tilde\RR(\lambda(x ))\big\|  
    + |\lambda(x )| 
    \,
    |v^*(s )-v^*(0)|
    \nonumber\\
    &\leq \|v^*\|_{\infty} \big|\mu  - |\lambda(x )|\big|   
    + \lip_{\lambda}\lip_{v^*}\dt|x |
    \nonumber\\
    &\leq \|v^*\|_{\infty} \lip_{\lambda}\lip_{\sat} |x(\tau_i)-x |
    + \lip_{\lambda}\lip_{v^*}\dt|x |
    \label{eq:V22}
\end{align}
Moreover,
\begin{align}
    \sup_{\sigma\in[\tau_i, t]} |x(t)-x(\sigma)|
    &\leq \int_{\tau_i}^t |f(x(\tau), \mu(\tau)\RR(\tau)v^*(s(\tau)))|\dd\tau
    \nonumber\\
    &\leq \int_{\tau_i}^t |f(x(\tau), \lambda(x(\tau)))|\dd\tau
    \nonumber\\
    &\quad+ \int_{\tau_i}^t |f(x(\tau), \mu(\tau)\RR(\tau)v^*(s(\tau))) - f(x(\tau), \lambda(x(\tau)))|\dd\tau
    \nonumber\\
    &\leq \dt \lip_{f_\lambda} \sup_{\tau\in[\tau_i, t]} |x(\tau)|
    + \lip_f  \int_{\tau_i}^t  |\mu(\tau)\RR(\tau)v^*(s(\tau)) - \lambda(x(\tau))| \dd\tau
    \nonumber\\
    &\leq \dt \lip_{f_\lambda} \sup_{\tau\in[\tau_i, t]} |x(\tau)|
    + \lip_f   \|v^*\|_{\infty} \lip_{\lambda}\lip_{\sat}\int_{\tau_i}^t  |x(\tau_i)-x(\tau)| \dd\tau
    \nonumber\\
    &\quad+\dt \lip_f \lip_{\lambda}\lip_{v^*} \sup_{\tau\in[\tau_i, t]} |x(\tau)|
    \label{eq:V3}
\end{align}
where \eqref{eq:V22} has been used to obtain the last inequality.
Hence, according to Grönwall's inequality:
\begin{equation}\label{eq:V3bis}
\sup_{\sigma\in[\tau_i, t]} |x(t)-x(\sigma)|
\leq \e^{\dt \lip_f   \|v^*\|_{\infty} \lip_{\lambda}\lip_{\sat} }\dt(\lip_{f_\lambda}+ \lip_f\lip_{\lambda}\lip_{v^*}) \sup_{\tau\in[\tau_i, t]} |x(\tau)|
\end{equation}
Thus,
\begin{align*}
\sup_{\tau\in[\tau_i, t]} |x(\tau)|
&\leq 
|x(t)| + \sup_{\sigma\in[\tau_i, t]} |x(t)-x(\sigma)|
\nonumber\\
&\leq |x(t)| + \e^{\dt \lip_f   \|v^*\|_{\infty} \lip_{\lambda}\lip_{\sat} }\dt(\lip_{f_\lambda}+ \lip_f\lip_{\lambda}\lip_{v^*}) \sup_{\tau\in[\tau_i, t]} |x(\tau)|,
\end{align*}
i.e., if 
$\dt$ is sufficiently small so that
$\alpha_{6}(\dt) = (1-\e^{\dt \lip_f  \|v^*\|_{\infty} \lip_{\lambda}\lip_{\sat} }\dt(\lip_{f_\lambda}+ \lip_f\lip_{\lambda}\lip_{v^*}))^{-1}$ is well-defined and positive, we have
\begin{align}\label{eq:V5}
    \sup_{\tau\in[\tau_i, t]} |x(\tau)| \leq \alpha_{6}(\dt) |x(t)|.
\end{align}
Combining \eqref{eq:V22}, \eqref{eq:V3bis} and \eqref{eq:V5}, we obtain
\begin{align*}
    |\mu(t)\RR(t)v^*(s(t)) - \lambda(x(t))|
    &\leq \|v^*\|_{\infty} \lip_{\lambda}\lip_{\sat}|x(\tau_i)-x(t)|
    + \lip_{\lambda}\lip_{v^*}\dt|x(t)|
    \nonumber\\
    &\leq \alpha_{5}(\dt)|x(t)|,
\end{align*}
with $\alpha_{5}(\dt) = \|v^*\|_{\infty} \lip_{\lambda}\lip_{\sat} \e^{\dt \lip_f  \|v^*\|_{\infty} \lip_{\lambda}\lip_{\sat}}\dt(\lip_{f_\lambda}+ \lip_f\lip_{\lambda}\lip_{v^*})\alpha_{6}(\dt)
    + \lip_{\lambda}\lip_{v^*}\dt$.
Note that $\alpha_{6}(\dt)\to1$ and then $\alpha_{5}(\dt)\to0$ as $\dt\to0$.
\end{proof}

We are now ready to prove the main result of the section.

\begin{proof}[Proof of Proposition~\ref{lem:les}]
Since $\lambda$ is Lipschitz continuous over $\KK_x$, it is sufficient to show that there exist $M\geq1$ and $\nu>0$ such that, for all initial condition $(x_0, s_0, \mu_0, \RR_0)\in \KK_x\times[0, \dt]\times[0, \bar\lambda]\times\O(p)$, the corresponding trajectory $(x, s, \mu, \RR)$ is such that, for all $t\geq0$,
\begin{equation}\label{eq:Vf}
    |x(t)|\leq M \e^{-\nu t} (|x_0| + |\mu_0|).
\end{equation}
Then, for almost all $t\in(0, t^*)$,
\begin{align}
    \frac{\dd}{\dd t}V(x  )
    &= \frac{\partial V}{\partial x}(x  )f(x  , \mu  \RR  v^*(s  ))\nonumber
    \\
    &= \frac{\partial V}{\partial x}(x  )\big[f(x  , \lambda(x  ))+ f(x  , \mu  \RR  v^*(s  )) - f(x  , \lambda(x  )) \big]\nonumber
    \\
    &\leq -\alpha_4|x  |^2 + \alpha_3\lip_{f}|x  ||\mu  \RR  v^*(s  ) - \lambda(x  )|.\label{eq:V1}
\end{align}

From Lemma~\ref{L:feedbackdrift}, there exists $\dt_2^*>0$ such that, if $\dt\in(0, \dt_2^*)$, then 
$\alpha_3\alpha_{5}(\dt)\lip_{f}<\alpha_4$.
Thus, with \eqref{eq:V1}, we obtain that for almost all $t\in(\tau_0,t^*)$,
\begin{align*}
    \frac{\dd}{\dd t}V(x(t))
    \leq -(\alpha_4-\alpha_3\alpha_{5}(\dt)\lip_{f})|x(t)|^2\leq -\frac{\alpha_4-\alpha_3\alpha_{5}(\dt)\lip_{f}}{\alpha_1}V(x(t)),
\end{align*}
hence, since $\tau_0<\dt$,
\begin{align}\label{eq:V6}
    V(x(t))
    \leq \e^{-(t-\dt)\frac{\alpha_4-\alpha_3\alpha_{5}(\dt)\lip_{f}}{\alpha_1}}V(x(\tau_0)).
\end{align}
Now from Lemma~\ref{L:beginning}, we obtain that for all $t\in[0, t^*)$,
\begin{equation}
 V(x(t)) \leq \e^{-(t-\dt)\frac{\alpha_4-\alpha_3\alpha_{5}(\dt)\lip_{f}}{\alpha_1}} \e^{\dt\frac{\alpha_3\lip_f}{\alpha_1}}\Big(\sqrt{V(x_0)}
    +\dt \frac{\lip_f \|v^*\|_{\infty}}{2\sqrt{\alpha_1}}
    |\mu_0|
    \Big)^2
\end{equation}
Let $r\in(0, \bar r)$ be such that $V(\xi)\leq r$ for all $\xi\in\KK_x$.
Let $\dt^*_3>0$ be such that, for all $\dt\in(0, \dt^*_3)$,
\begin{equation}\label{eq:rbar}
 \e^{\dt\frac{\alpha_4-\alpha_3\alpha_{5}(\dt)\lip_{f}}{\alpha_1}} \e^{\dt\frac{\alpha_3\lip_f}{\alpha_1}}\Big(\sqrt{r}
    +\dt \frac{\lip_f \|v^*\|_{\infty}}{2\sqrt{\alpha_1}}
    \bar\lambda
    \Big)^2
    < \bar r.
\end{equation}
Set $\dt^*_0 = \min(\dt^*_2, \dt^*_3)$.
Then, if $\dt\in(0, \dt^*)$, $x$ remains in $\KK_x'$ (i.e. $V(x(t))\leq \bar r$ for all $t\in\R_+$, i.e. $t^*=+\infty)$,
and
\eqref{eq:Vf} is satisfied with $\nu = \frac{\alpha_4-\alpha_3\alpha_{5}(\dt)\lip_{f}}{\alpha_1}$ and
$M = \frac{1}{\sqrt{\alpha_1}}\e^{\dt\nu}\max(\e^{\dt\frac{\alpha_3\lip_f}{2\alpha_1}}, \dt \frac{\lip_f \|v^*\|_{\infty}}{2\sqrt{\alpha_1}}
    \e^{\dt\frac{\alpha_3\lip_f}{2\alpha_1}})$.
\end{proof}

\subsection{Stabilization by means of templated output feedback}

In this subsection, we prove Theorem \ref{th:main}.
We show that there exists
$C\geq1$ and $\varpi>0$ such that,
for any $(x_0,z_0,s_0,\mu_0,\RR_0)\in\KK_x\times\KK_z\times[0, \dt]\times\R_+\times\O(p)$,
the corresponding unique maximal solution
$(x,z,s,\mu,\RR):E\to\R^n\times\R^{m(q+1)}\times[0, \dt]\times\R_+\times\O(p)$
of \eqref{eq:closed-loop}
is complete and
\begin{equation*}
    \|(x,z,\mu)(t)\| \leq C\e^{-\varpi t}\|(x_0,z_0,\mu_0)\|.
\end{equation*}
For one such given maximal solution, denote
by $T_e\in\R_+^*\cup\{+\infty\}$ the time of existence of the solution.
Clearly, $s$, $\mu$ and $\RR$ remain in the bounded sets $[0, \dt]$, $[0, \bar\lambda]$ and $\O(p)$, respectively.
Hence, we show that $x$ and $z$ remain bounded, which implies $T_e = +\infty$.
Let $t^* = \inf\{t\in[0, T_e)\mid x(t)\notin\KK_x'\}$ (with the convention that $t^*=T_e$ when the set is empty).
In particular, we show that $t^*=T_e=+\infty$, i.e. $x$ remains in $\KK_x'$ and $z$ is bounded.
For all $(t,i)\in E$, define $e(t, i)= z(t, i) - \HH_q(x(t, i), s(t, i), \mu(t, i)\RR(t, i) v^*)$ for all $(t, i)\in E$.

We first investigate the exponential stability of the estimation error. In what follows, $\lip_{\HH_q}$ denote the Lipschitz constant of $(x, \mu, \RR)\mapsto\HH_q(x, 0, \mu\RR v^*)$ over the compact set $\KK_x'\times [0, \bar\lambda]\times\O(p)$.

\begin{lemma}\label{lem:exp_e}
Let $\Delta\in (0,T]$. Under the assumptions of Theorem~\ref{th:main},
there exists $\theta_2^*(\Delta)\geq\theta_1^*$ (see Lemma~\ref{lem:hg}) such that for any choice of $\theta\geq\theta_2^*$,
    there exists $\bar c(\theta, \dt)\geq1$ and $\bar \omega(\theta)>0$ such that, for all $(t, i)\in E$ such that $t<t^*$,
    \begin{equation}\label{eq:exp_e}
        |e(t, i)|\leq \bar c \e^{-\bar \omega t} |(x_0, z_0, \mu_0)|
    \end{equation}
\end{lemma}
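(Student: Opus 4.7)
\medskip

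\textbf{Plan of proof.} The strategy is the standard hybrid pattern for high-gain observers: combine exponential decay of $e$ during each flow period (from Lemma \ref{lem:hg}) with a controlled multiplicative blow-up at each jump, then tune $\theta$ large enough so that the flow decay over one period dominates the jump factor, yielding geometric decay of $|e(\tau_i^+)|$ and hence overall exponential decay of $|e(t,i)|$.

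\medskip

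\textbf{Step 1 (flow decay on one interval).} On each flow interval $[\tau_i,\tau_{i+1})\cap[0,t^*)$, the parameters $\mu,\RR$ are constant and the timer $s$ evolves on an interval of length $\tau_{i+1}-\tau_i\leq\dt\leq T$. The $(x,z)$ subdynamics then coincide with \eqref{obs:hg}, so Lemma~\ref{lem:hg} provides, for any $\theta\geq\theta_1^*$, constants $c\geq 1$ and $\omega>0$ (uniform in $(\mu,\RR)\in[0,\bar\lambda]\times\O(p)$) such that
\[
|e(t,i)|\leq c\,\theta^{q}\,\e^{-\theta\omega(t-\tau_i)}|e(\tau_i,i)|,\qquad t\in[\tau_i,\tau_{i+1}).
\]

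\textbf{Step 2 (jump estimate).} At the jump time $\tau_{i+1}$, one has $x^+=x$, while $z^+=\HH_q(\sat(\phi(z,\dt,\mu,\RR)),0,\mu^+\RR^+v^*)$. Since $x(\tau_{i+1})\in\KK_x'$ (because $t<t^*$), Lemma~\ref{Ass_HG} gives $x(\tau_{i+1})=\phi(\HH_q(x(\tau_{i+1}),\dt,\mu\RR v^*),\dt,\mu,\RR)$; writing $z(\tau_{i+1})=\HH_q(x(\tau_{i+1}),\dt,\mu\RR v^*)+e(\tau_{i+1},i)$ and applying the Lipschitz bound \eqref{eq:lip_phi}, together with $\sat|_{\KK_x'}=\Id$ being $\lip_\sat$-Lipschitz and $\HH_q(\cdot,0,\mu^+\RR^+v^*)$ being $\lip_{\HH_q}$-Lipschitz, yields
\[
|e(\tau_{i+1},i+1)|\leq \lip_{\HH_q}\lip_\sat\lip_\phi\,|e(\tau_{i+1},i)|
\eqdef K\,|e(\tau_{i+1},i)|.
\]

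\textbf{Step 3 (geometric decay of post-jump errors).} Chaining Step~1 (over a full period of length $\dt$) with Step~2 gives
\[
|e(\tau_{i+1},i+1)|\leq K c\theta^{q}\e^{-\theta\omega\dt}\,|e(\tau_i,i)|.
\]
Choose $\theta_2^*(\dt)\geq\theta_1^*$ large enough so that $\beta(\theta,\dt)\defeq Kc\theta^{q}\e^{-\theta\omega\dt}<1$ for all $\theta\geq\theta_2^*$; this is possible because the exponential dominates the polynomial in $\theta$. Then $|e(\tau_i,i)|\leq \beta^{i}\,|e(0,0)|\cdot(Kc\theta^q)$ after accounting for the (possibly short) initial interval $[0,\tau_0)$.

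\textbf{Step 4 (interpolation and initial bound).} Inserting this into Step~1 for any $t\in[\tau_i,\tau_{i+1})$ and using $i\geq t/\dt-2$ turns geometric decay in $i$ into exponential decay in $t$ with rate $\bar\omega=-\ln\beta/\dt>0$, and a constant $\bar c(\theta,\dt)$ depending on $c,K,\theta,\beta$. Finally, since $\HH_q$ is Lipschitz on the compact set $\KK_x'\times[0,\bar\lambda]\times\O(p)$ and vanishes at the origin (because $h(0)=0$ and $f(0,0)=0$), $|e(0,0)|\leq|z_0|+\lip_{\HH_q}(|x_0|+|\mu_0|)\leq C|(x_0,z_0,\mu_0)|$, which absorbs into $\bar c$ to give \eqref{eq:exp_e}.

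\medskip

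\textbf{Main obstacle.} The only delicate point is the jump analysis of Step~2: one must exploit the identity $x=\phi(\HH_q(x,\cdot),\cdot,\mu,\RR)$ from Lemma~\ref{Ass_HG} to convert the reset of $z$ into a pure Lipschitz perturbation of $e$, rather than an uncontrolled expression; everything else is bookkeeping of constants, with the tuning of $\theta$ making the jump factor $K$ harmless by the polynomial-vs-exponential comparison in $\theta$.
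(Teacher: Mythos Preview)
Your proposal is correct and follows essentially the same approach as the paper: flow decay from Lemma~\ref{lem:hg}, a Lipschitz bound $K=\lip_{\HH_q}\lip_{\sat}\lip_\phi$ on the jump via the identity $x=\phi(\HH_q(x,\cdot),\cdot,\mu,\RR)$, then the polynomial-vs-exponential comparison in $\theta$ to make the product contract. The paper's only cosmetic difference is that it imposes the slightly stronger threshold $\e^{\theta\omega\dt/2}\geq c\theta^q K$ (so $\beta\leq\e^{-\theta\omega\dt/2}$), which directly yields the explicit rate $\bar\omega=\theta\omega/2$ rather than $-\ln\beta/\dt$; and it writes the initial bound with an extra $\|v^*\|_\infty$ factor on $|\mu_0|$, which is just a matter of how $\lip_{\HH_q}$ is normalized.
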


\begin{proof}

On the one hand, for all $i\in\N$ such that $\tau_i < t^*$, {since $z^+=\HH_q(\sat(\phi(z, \Delta, \mu, \RR)), 0, \mu^+)\RR^+ v^*) $,}
\begin{align*}
|e^+(\tau_i)|
&= |z^+(\tau_i) - \HH_q(x^+(\tau_i), s^+(\tau_i), \mu^+(\tau_i)\RR^+(\tau_i) v^*)|
\\
&\leq \lip_{\HH_q} \big|\sat\big(\phi(z(\tau_i), \Delta, \mu(\tau_i), \RR(\tau_i))\big)-x(\tau_i)\big|.
\end{align*}
With $x(\tau_i)=\sat(x(\tau_i))=\sat\big(\phi(\HH_q(x(\tau_i), \dt, \mu(\tau_i)\RR(\tau_i) v^*), \dt, \mu(\tau_i), \RR(\tau_i))\big)$, 
\[
|e^+(\tau_i)|
\leq \lip_{\HH_q} \lip_{\sat}\lip_{\phi} |z(\tau_i) - \HH_q(x(\tau_i), \dt,\mu(\tau_i)\RR(\tau_i)v^*)|
= \lip_{\HH_q} \lip_{\sat}\lip_{\phi} |e(\tau_i)|,
\]
On the other hand, if $\theta\geq \theta_1^*$, we apply Lemma \ref{lem:hg}  on $(\tau_i, \tau_{i+1})$ for all $(t, i)\in E$ such that $t<t^*$ and we get for $\tau_i < t \leq \tau_{i+1}$:
\begin{equation}
    |e(t)| \leq c \theta^{q} \e^{-\theta \omega (t-\tau_i)} |e^+(\tau_i)|.
\end{equation}
An immediate induction shows that for all $t\in(\tau_0, t^*)$ and $i\in\N$ such that $\tau_i < t \leq \tau_{i+1}$,
\begin{equation*}
    |e(t)|
    \leq \e^{-\theta \omega (t-\tau_i)} \e^{-\theta \omega i\dt} \Big(c \theta^{q} \lip_{\HH_q} \lip_{\sat}\lip_{\phi} \Big)^{i+1} |e(\tau_0)|.
\end{equation*}
If $s_0=\dt$, then $\tau_0 = 0$. Otherwise, $|e(\tau_0)|\leq c \theta^{q} \e^{-\theta \omega \tau_0} |e(0)|$.
In any case,
\[
|e(t)|
    \leq \e^{-\theta \omega (t-\tau_i)} \e^{-\theta \omega i\dt} (c \theta^{q} \lip_{\HH_q} \lip_{\sat}\lip_{\phi} )^{i+1} c \theta^{q} |e(0)|
\]
Now let $\theta_2^*(\Delta)\geq \theta_1^*$ be such that if $\theta\geq \theta_2^*$, then $\e^{\frac{\theta \omega \dt}{2}}     
    \geq
    c \theta^{q} \lip_{\HH_q} \lip_{\sat}\lip_{\phi}$. As a result, with $\theta\geq \theta_2^*$
\begin{align}
    |e(t)|
    &\leq \e^{-\theta \omega (t-\tau_i)} \e^{-\frac{\theta \omega i\dt}{2}} (c \theta^{q})^2 \lip_{\HH_q} \lip_{\sat}\lip_{\phi} |e(0)|
    \nonumber\\
    &\leq \e^{-\frac{\theta \omega }{2}(t-\tau_0)} (c \theta^{q})^2 \lip_{\HH_q} \lip_{\sat}\lip_{\phi} |e(0)|
    \label{eq:Vebis}
    \\
    &\leq \e^{-\frac{\theta \omega }{2}t} \e^{\frac{\theta \omega }{2}\dt} (c \theta^{q})^2 \lip_{\HH_q} \lip_{\sat}\lip_{\phi} |e(0)|
    \label{eq:Ve}
\end{align}
Moreover, using $\HH_q(0, s_0, 0)=0$, we have by triangular inequality
\[
    |e(0)|
    \leq |z_0| + |\HH_q(x_0, s_0, \mu_0\RR_0 v^*) - \HH_q(0, s_0, 0)|
\leq |z_0| + \lip_{\HH_q}(|x_0| + \|v^*\|_\infty |\mu_0|).
\]
Hence, for all $(t, i)\in E$ such that $t<t^*$,
\[
    |e(t, i)|
    \leq \e^{-\frac{\theta \omega }{2}t} \e^{\frac{\theta \omega }{2}\dt} (c \theta^{q})^2 \lip_{\HH_q} \lip_{\sat}\lip_{\phi} (|z_0| + \lip_{\HH_q}(|x_0| + \|v^*\|_\infty |\mu_0|)).
\]
Hence, $e$ remains bounded over $[0, t^*)$, and \eqref{eq:exp_e} is satisfied for a suitable choice of $(\bar c, \bar \omega)$.
\end{proof}

Now, we investigate the exponential stability of the system's state.

First we give an estimate of  the gap between the state feedback and templated output feedback, in the same fashion as we did for the templated feedback in Lemma~\ref{L:feedbackdrift}, except now an estimation error is bound to appear. Some elements of the proof are reminiscent of the proof of Lemma~\ref{L:feedbackdrift}, so they may be exposed more succinctly.

\begin{lemma}\label{L:gap_output_feedback}
Under the assumptions of Theorem~\ref{th:main},
there exists $\alpha_{7}(\dt)\in \R^+$ such that if $\dt\in(0, \dt^*_1)$, then
for all $(t, i)\in E$ such that $\tau_i<t< \tau_{i+1}$ and $t<t^*$,
\begin{equation}
    |\mu(t)\RR(t)v^*(s(t)) - \lambda(x(t))|
    \leq \alpha_{5}(\dt)|x(t)| + \alpha_{7}(\dt)|e(\tau_i)|.
\end{equation}
\end{lemma}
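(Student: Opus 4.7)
The plan is to mirror the proof of Lemma \ref{L:feedbackdrift}, with the sole structural change being that the sampled feedback is now built from the observer estimate $\hat{x}(\tau_i) := \sat(\phi(z(\tau_i), \dt, \mu(\tau_i), \RR(\tau_i)))$ rather than the true state $x(\tau_i)$. The jump map of \eqref{eq:closed-loop} enforces $\mu(t) = |\lambda(\hat{x}(\tau_i))|$ and $\RR(t) \in \RRR_0(\lambda(\hat{x}(\tau_i)))$ throughout $t \in (\tau_i, \tau_{i+1}]$, hence $\mu(t)\RR(t)v^*(0) = \lambda(\hat{x}(\tau_i))$. The observer-induced drift $|\hat{x}(\tau_i) - x(\tau_i)|$ is controlled by $\lip_{\sat}\lip_{\phi}|e(\tau_i)|$, via the identity $x(\tau_i) = \sat(\phi(\HH_q(x(\tau_i), \dt, \mu(\tau_i)\RR(\tau_i)v^*), \dt, \mu(\tau_i), \RR(\tau_i)))$ already exploited in the proof of Lemma \ref{lem:exp_e}.

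First, I would apply Lemma \ref{lem:R0} exactly as in the proof of Lemma \ref{L:feedbackdrift}, but centered at $\hat{x}(\tau_i)$, to obtain
\[
|\mu(t)\RR(t)v^*(s(t)) - \lambda(x(t))| \leq 2\|v^*\|_\infty\lip_{\lambda}|\hat{x}(\tau_i) - x(t)| + \lip_{\lambda}\lip_{v^*}\dt|x(t)|.
\]
By triangle inequality and the observer-drift bound above,
\[
|\hat{x}(\tau_i) - x(t)| \leq \lip_{\sat}\lip_{\phi}|e(\tau_i)| + |x(\tau_i) - x(t)|,
\]
so the task reduces to controlling $|x(\tau_i) - x(t)|$ by $|x(t)|$ and $|e(\tau_i)|$.

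For this, I integrate $\dot x = f(x, \mu\RR v^*(s))$ on $[\tau_i, t]$, split $f(x, u) = f(x, \lambda(x)) + [f(x, u) - f(x, \lambda(x))]$ to expose $\lip_{f_\lambda}$ and $\lip_f$, and substitute the above bound inside the integral. This produces a self-referential inequality of the form
\[
\sup_{\tau \in [\tau_i, t]}|x(\tau) - x(\tau_i)| \leq a(\dt)\sup_{\tau \in [\tau_i, t]}|x(\tau)| + b(\dt)\sup_{\tau \in [\tau_i, t]}|x(\tau) - x(\tau_i)| + c(\dt)|e(\tau_i)|,
\]
with $a(\dt), b(\dt) \to 0$ as $\dt \to 0$ and $c(\dt)$ bounded. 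An algebraic absorption (for $\dt$ small enough that $b(\dt) < 1$), followed by $\sup_{\tau}|x(\tau)| \leq |x(t)| + 2\sup_{\tau}|x(\tau) - x(\tau_i)|$ and a second absorption, yields $|x(\tau_i) - x(t)| \leq \tilde\alpha(\dt)|x(t)| + \tilde\beta(\dt)|e(\tau_i)|$, with $\tilde\alpha(\dt) \to 0$ and $\tilde\beta$ bounded. Substituting back into the decomposition and collecting terms in $|x(t)|$ and $|e(\tau_i)|$ separately gives the claimed estimate: the coefficient of $|x(t)|$ can be absorbed into $\alpha_{5}(\dt)$ from Lemma \ref{L:feedbackdrift} (the argument being identical up to the swap $x(\tau_i) \leftrightarrow \hat{x}(\tau_i)$, possibly after slightly enlarging $\alpha_{5}$), while the coefficient of $|e(\tau_i)|$ defines the sought $\alpha_{7}(\dt)$.

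The main technical subtlety I anticipate is the bookkeeping of the Grönwall-style absorption so that the coefficient of $|x(t)|$ genuinely matches $\alpha_{5}(\dt)$. Equally important to note for the sequel is that $\alpha_{7}(\dt)$ does \emph{not} vanish as $\dt \to 0$: the raw observer-drift contribution $2\|v^*\|_\infty\lip_{\lambda}\lip_{\sat}\lip_{\phi}$ persists in the limit, in contrast with $\alpha_{5}$. This is consistent with the strategy of the upcoming proof of Theorem \ref{th:main}, where this $O(1)$ factor multiplying $|e(\tau_i)|$ will have to be offset by the exponential decay of the estimation error driven by the high-gain parameter $\theta$.
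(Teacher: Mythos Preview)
Your proposal is correct and follows essentially the same strategy as the paper. The one structural difference worth flagging is the choice of intermediate pivot in the triangle inequality: the paper routes through the \emph{true-state} templated feedback $|\lambda(x(\tau_i))|\bar\RR(\lambda(x(\tau_i)))v^*(s)$ (choosing $\bar\RR\in\RRR_0(\lambda(x(\tau_i)))$ via Lemma~\ref{lem:R0}), whereas you route through $\lambda(\hat x(\tau_i))$ and only then split $|\hat x(\tau_i)-x(t)|\leq \lip_{\sat}\lip_\phi|e(\tau_i)|+|x(\tau_i)-x(t)|$. The payoff of the paper's choice is that the second piece of the decomposition, $\big|\,|\lambda(x(\tau_i))|\bar\RR(\lambda(x(\tau_i)))v^*(s)-\lambda(x(t))\big|$, is \emph{literally} the quantity already bounded in \eqref{eq:V22}, and the Gr\"onwall step (equations \eqref{eq:V11}--\eqref{eq:V15}) is then a verbatim repeat of \eqref{eq:V3}--\eqref{eq:V5} plus an additive $|e(\tau_i)|$ term; this makes the coefficient of $|x(t)|$ come out as exactly the same $\alpha_5(\dt)$ with no bookkeeping caveat. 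In your route the coefficient of $|x(\tau_i)-x(t)|$ is (up to your spurious factor $2$) $\|v^*\|_\infty\lip_\lambda$ instead of $\|v^*\|_\infty\lip_\lambda\lip_{\sat}$, which since $\lip_{\sat}\geq 1$ is actually tighter, so your $|x(t)|$-coefficient is dominated by $\alpha_5(\dt)$ anyway and no enlargement is needed. Your closing observation that $\alpha_7(\dt)$ does not vanish as $\dt\to 0$ is correct and matches the explicit formula \eqref{eq:alpha8}, whose leading term $2\|v^*\|_\infty\lip_\lambda\lip_{\sat}\lip_\phi$ is $\dt$-independent.
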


\begin{proof}
    Using Lemma \ref{lem:R0}, for all $i\in\N$ such that $\tau_i<t^*$, pick $\bar\RR(\lambda(x(\tau_i)))\in \RRR_0(\lambda(x(\tau_i)))$, so that
$\||\lambda(x(\tau_i))|\bar\RR(\lambda(x(\tau_i)))-\mu(t)\RR(t)\|\leq 2|\lambda(x(\tau_i)) - \lambda(\sat(\phi(z(\tau_i), \dt, \mu(\tau_i))))|$.
Then, for all $i\in\N$ and all $t\in(\tau_i, \tau_{i+1}]$ such that $t<t^*$,
\begin{align}
    |\mu(t)\RR(t)v^*(s(t)) - \lambda(x(t))|
    &\leq |\mu(t)\RR(t)v^*(s(t)) - |\lambda(x(\tau_i))|\bar\RR(\lambda(x(\tau_i)))v^*(s(t))|
    \nonumber\\
    &\quad+ ||\lambda(x(\tau_i))|\bar\RR(\lambda(x(\tau_i)))v^*(s(t)) - \lambda(x(t))|.
    \label{eq:V8}
\end{align}
On the one hand,
\begin{align*}
    |\mu(t)\RR(t)v^*(s(t)) &- |\lambda(x(\tau_i))|\bar\RR(\lambda(x(\tau_i)))v^*(s(t))|
    \leq \|v^*\|_{\infty} \|\mu(t)\RR(t) - |\lambda(x(\tau_i))|\bar\RR(\lambda(x(\tau_i)))\|
    \\
    &\leq \|v^*\|_{\infty} 
    |\mu(t) - \lambda(x(\tau_i))|
    \\
    &\leq \|v^*\|_{\infty} \lip_\lambda 
    |\sat(\phi(z(\tau_i), \dt, \mu(\tau_i))) - x(\tau_i)|
\end{align*}
since $\mu(t) = |\lambda(\sat(\phi(z(\tau_i), \dt, \mu(\tau_i))))|$.
Since, for all $t\in[0, t^*)$, $x(t) = \sat(x(t))$ and $$x(t) = \phi(\HH_q(x(t), s(t), \mu(t)\RR(t) v^*), s(t), \mu(t), \RR(t))$$ by definition of $\phi$, we obtain that
\begin{align}
    |\mu(t)\RR(t)v^*(s(t)) &- |\lambda(x(\tau_i))|\bar\RR(\lambda(x(\tau_i)))v^*(s(t))|
    \nonumber\\
    &\leq \|v^*\|_{\infty} \lip_\lambda \lip_{\sat}\lip_\phi|z(\tau_i) - \HH_q(x(\tau_i), \dt, \mu(\tau_i)\RR(\tau_i) v^*)|
    \nonumber\\
    &\leq \|v^*\|_{\infty} \lip_\lambda \lip_{\sat}\lip_\phi|e(\tau_i)|.
    \label{eq:V9}
\end{align}
On the other hand, reasoning as in Lemma~\ref{L:feedbackdrift} we have
\begin{align}
    ||\lambda(x(\tau_i))|\bar\RR(\lambda(x(\tau_i)))v^*(s(t)) - \lambda(x(t))|
    \leq \|v^*\|_{\infty} \lip_{\lambda}\lip_{\sat} |x(\tau_i)-x(t)|
    + \lip_{\lambda}\lip_{v^*}\dt|x(t)|
    \label{eq:V10}
\end{align}
Moreover,
\begin{align}
    \sup_{\sigma\in[\tau_i, t]} |x(t)-x(\sigma)|
    &\leq \int_{\tau_i}^t |f(x(\tau), \mu(\tau)\RR(\tau)v^*(s(\tau)))|\dd\tau
    \nonumber\\
    &\leq \int_{\tau_i}^t |f(x(\tau), |\lambda(x(\tau_i))|\bar\RR(\lambda(x(\tau_i)))v^*(s(\tau)))|\dd\tau
    \nonumber\\
    &\quad + \int_{\tau_i}^t |f(x(\tau), \mu(\tau)\RR(\tau)v^*(s(\tau))) - f(x(\tau), |\lambda(x(\tau_i))|\bar\RR(\lambda(x(\tau_i)))v^*(s(\tau)))|\dd\tau
    \label{eq:V11}
\end{align}
On one hand, reasoning as in \eqref{eq:V3}, we get
\begin{align}
    \int_{\tau_i}^t |f(x(\tau), |\lambda(x(\tau_i))|\bar\RR(\lambda(x(\tau_i)))v^*(s(\tau)))|\dd\tau
    &\leq \dt \lip_{f_\lambda} \sup_{\tau\in[\tau_i, t]} |x(\tau)|
    \nonumber\\
    &\quad+ \lip_f  \|v^*\|_{\infty} \lip_{\lambda}\lip_{\sat}\int_{\tau_i}^t  |x(\tau_i)-x(\tau)| \dd\tau
    \nonumber\\
    &\quad+\dt \lip_f \lip_{\lambda}\lip_{v^*} \sup_{\tau\in[\tau_i, t]} |x(\tau)|.\label{eq:V12}
\end{align}
On the other hand,
\begin{align}
    \int_{\tau_i}^t |f(x(\tau), \mu(\tau)\RR(\tau)v^*(s(\tau))) &- f(x(\tau), |\lambda(x(\tau_i))|\bar\RR(\lambda(x(\tau_i)))v^*(s(\tau)))|\dd\tau
    \nonumber\\
    &\leq \dt\lip_f\sup_{\tau\in[\tau_i, t]} |\mu(\tau)\RR(\tau)v^*(s(\tau)) - |\lambda(x(\tau_i))|\bar\RR(\lambda(x(\tau_i)))v^*(s(\tau))|
    \nonumber\\
    &\leq \dt\lip_f \|v^*\|_{\infty} \lip_\lambda \lip_{\sat}\lip_\phi|e(\tau_i)|.\label{eq:V13}
\end{align}
Combining \eqref{eq:V12} and \eqref{eq:V13} into \eqref{eq:V11} and using Grönwall's inequality, we obtain
\begin{align}
\sup_{\sigma\in[\tau_i, t]} |x(t)-x(\sigma)|
\leq \e^{\dt \lip_f  \|v^*\|_{\infty} \lip_{\lambda}\lip_{\sat} }
\Big(&\dt(\lip_{f_\lambda}+ \lip_f\lip_{\lambda}\lip_{v^*}) \sup_{\tau\in[\tau_i, t]} |x(\tau)|
\nonumber\\
&+\dt\lip_f \|v^*\|_{\infty} \lip_\lambda \lip_{\sat}\lip_\phi|e(\tau_i)|\Big)
\label{eq:V14}
\end{align}
Thus, similarly to \eqref{eq:V5}, we obtain
\begin{align}\label{eq:V15}
    \sup_{\tau\in[\tau_i, t]} |x(\tau)| \leq \alpha_{6}(\dt) |x(t)| + \alpha_{8}(\dt)|e(\tau_i)|
\end{align}
with $\alpha_{8}(\dt) = \e^{\dt \lip_f  \|v^*\|_{\infty} \lip_{\lambda}\lip_{\sat}}\dt\lip_f \|v^*\|_{\infty} \lip_\lambda \lip_{\sat}\lip_\phi$.
Combining \eqref{eq:V8}, \eqref{eq:V9}, \eqref{eq:V10}, \eqref{eq:V14} and \eqref{eq:V15} yields the stated bound
\begin{align}\nonumber
    |\mu(t)\RR(t)v^*(s(t)) - \lambda(x(t))|
    \leq \alpha_{5}(\dt)|x(t)| + \alpha_{7}(\dt)|e(\tau_i)|
\end{align}
with
\begin{align}
\alpha_{7}(\dt) &= 2\|v^*\|_{\infty} \lip_\lambda \lip_{\sat}\lip_\phi
+
(\|v^*\|_{\infty} \lip_{\lambda}\lip_{\sat})^2
\e^{\dt \lip_f  \|v^*\|_{\infty} \lip_{\lambda}\lip_{\sat}}
\dt\lip_f
\nonumber\\
&\quad+
\|v^*\|_{\infty} \lip_{\lambda}\lip_{\sat}
\e^{\dt \lip_f  \|v^*\|_{\infty} \lip_{\lambda}\lip_{\sat}}
\dt(\lip_{f_\lambda}+ \lip_f\lip_{\lambda}\lip_{v^*})
\alpha_{8}(\dt).
\label{eq:alpha8}
\end{align}
\end{proof}

Now that the gap in control is bounded, we can prove exponential stability of the state.

\begin{lemma}\label{lem:exp_x}
Under the assumptions of Theorem~\ref{th:main}, there exists $\Delta_4^*\in (0,T]$ such that for all $\Delta\in (0,\Delta_4^*]$, there exists $\theta_3^*(\Delta)\geq\theta_2^*(\Delta)$ (see Lemma~\ref{lem:exp_e}) such that for any choice of  $\theta\geq \theta_3^*$,
    we have $t^*=T_e$, and
    there exists $\bar M(\theta, \dt)\geq1$ and $\bar \nu(\theta, \dt)>0$ such that, for all $(t, i)\in E$ such that $t<T_e$,
    \begin{equation}\label{eq:exp_x}
        |x(t, i)|\leq \bar M \e^{-\bar \nu t} |(x_0, z_0, \mu_0)|
    \end{equation}
\end{lemma}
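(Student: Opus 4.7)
The strategy mirrors that of Proposition~\ref{lem:les}, but the gap bound in Lemma~\ref{L:gap_output_feedback} now carries an additional observer-error term $\alpha_7(\dt)|e(\tau_i)|$, which must be absorbed using the exponential decay of $e$ from Lemma~\ref{lem:exp_e}. I would first derive the exponential estimate on $[0, t^*)$, and then close the argument by a bootstrap forcing $t^* = T_e$.

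For $(t,i)\in E$ with $\tau_i<t<\tau_{i+1}$ and $t<t^*$, I would differentiate $V(x(t))$ using \eqref{eq:Vexp} and split $f(x,\mu\RR v^*(s)) = f(x,\lambda(x)) + [f(x,\mu\RR v^*(s))-f(x,\lambda(x))]$. Combined with Lemma~\ref{L:gap_output_feedback}, this yields
\begin{equation*}
\frac{\dd}{\dd t}V(x(t)) \leq -(\alpha_4 - \alpha_3\alpha_5(\dt)\lip_f)|x(t)|^2 + \alpha_3\lip_f\alpha_7(\dt)|x(t)|\,|e(\tau_i)|.
\end{equation*}
Taking $\dt<\dt_2^*$ so that the coefficient of $|x|^2$ is positive, Young's inequality on the cross term with weight $\tfrac{1}{2}(\alpha_4-\alpha_3\alpha_5(\dt)\lip_f)$ produces a differential inequality $\dot V(x(t))\leq -\gamma V(x(t)) + C_1(\dt)|e(\tau_i)|^2$ with $\gamma=\frac{\alpha_4-\alpha_3\alpha_5(\dt)\lip_f}{2\alpha_2}>0$.

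Variation of constants on each $(\tau_i,\tau_{i+1})$ and the fact that $x$ does not jump give the discrete recursion
\begin{equation*}
V(x(\tau_{i+1}))\leq \e^{-\gamma\dt}V(x(\tau_i)) + \tfrac{C_1(\dt)}{\gamma}\bar c^2 \e^{-2\bar\omega \tau_i}|(x_0,z_0,\mu_0)|^2,
\end{equation*}
using Lemma~\ref{lem:exp_e} to bound $|e(\tau_i)|$. Solving this recursion gives a geometric sum that, upon choosing $\theta\geq\theta_3^*(\dt)$ large enough so that $2\bar\omega(\theta)\neq\gamma$ and both exponents are bounded below by a common positive constant, collapses to a single exponentially-decaying bound. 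The initial interval $[0,\tau_0)$ is handled by Lemma~\ref{L:beginning}, so that $V(x(\tau_0))$ is dominated by $V(x_0)+|\mu_0|^2$. Composing these estimates, and invoking the bound $|e(0)|\leq|z_0|+\lip_{\HH_q}(|x_0|+\|v^*\|_\infty|\mu_0|)$ derived inside the proof of Lemma~\ref{lem:exp_e}, yields the claimed estimate $|x(t)|\leq \bar M\e^{-\bar\nu t}|(x_0,z_0,\mu_0)|$ via $\alpha_1|x|^2\leq V(x)$.

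It remains to close the bootstrap. Taking $\dt_4^*\leq\min(\dt_1^*,\dt_2^*,\dt_3^*)$ and, for $\dt\in(0,\dt_4^*]$, $\theta_3^*(\dt)$ sufficiently large so that the prefactor $\bar M^2\sup_{\KK_x\times\KK_z\times[0,\bar\lambda]}|(x_0,z_0,\mu_0)|^2$ stays strictly below $\bar r$, the derived bound prevents $V(x(t))$ from ever reaching $\bar r$. Since $V(x_0)\leq r<\bar r$ on $\KK_x$, continuity gives $V(x(t))<\bar r$ on a right-neighborhood of $0$, and the estimate forbids any first crossing; hence $t^*=T_e$, and since both $x$ and $e$ are bounded on $[0,T_e)$, so is $z$, giving $T_e=+\infty$. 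The principal difficulty lies in the parameter interplay: $\dt$ must be small to control $\alpha_5(\dt)$ and $\alpha_7(\dt)$, but shrinking $\dt$ forces a larger $\theta$ (and thus a larger transient prefactor $\bar c(\theta,\dt)\theta^{q}$) to dominate the Lyapunov decay rate $\gamma$; one must verify these choices can be made compatibly so the overshoot fits within $\bar r$ uniformly over the initial-data set.
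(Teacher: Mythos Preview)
Your derivation of the differential inequality for $V(x)$ and the variation-of-constants/recursion argument on $[0,t^*)$ follows the paper's approach closely and is essentially correct. The genuine gap is in the bootstrap closure. You propose to pick $\theta_3^*(\dt)$ large enough that $\bar M^2\sup_{\KK_x\times\KK_z\times[0,\bar\lambda]}|(x_0,z_0,\mu_0)|^2<\bar r$, but $\bar M$ \emph{grows} with $\theta$: the forcing on $(\tau_0,\tau_1)$ is $C_1(\dt)|e(\tau_0)|^2$, and high-gain peaking gives $|e(\tau_0)|\leq c\theta^q|e(0)|$ in the worst case, so the contribution to $V(x(\tau_1))$ carries a factor $(c\theta^q)^2$. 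Since $\alpha_7(\dt)$ does \emph{not} vanish as $\dt\to0$ (see~\eqref{eq:alpha8}), and the constraint $\theta\geq\theta_2^*(\dt)$ forces $\theta\to\infty$ as $\dt\to0$, neither parameter can be tuned to absorb this peaking uniformly over the initial-data compact. Your final sentence flags the tension but does not resolve it; as written, the compatibility cannot be achieved.

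The paper decouples the bootstrap from the exponential estimate. First, a crude $\theta$-independent Grönwall bound (as in Lemma~\ref{L:beginning}, but over $[0,\min(\tau_1,t^*))$, of length $\leq2\dt$, using only the input bound $\mu\leq\bar\lambda$) shows $V(x)<\bar r$ on $[0,\tau_1]$ provided $\dt\leq\dt_4^*$; this guarantees $t^*>\tau_1$ regardless of $\theta$. Second, assuming $t^*<T_e$, at $t^*$ one has $V(x(t^*))=\bar r$ and $\frac{\dd}{\dd t}V(x(t^*))\geq0$, which via the differential inequality forces $|e(\tau_i)|^2$ above a fixed positive constant for the index $i$ with $\tau_i<t^*\leq\tau_{i+1}$. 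But now $i\geq1$, so $\tau_i-\tau_0\geq\dt$, and the sharper bound \eqref{eq:Vebis} gives $|e(\tau_i)|\leq \e^{-\theta\omega\dt/2}(c\theta^q)^2\lip_{\HH_q}\lip_{\sat}\lip_\phi|e(0)|$, in which the exponential factor in $\theta$ dominates the polynomial peaking. Choosing $\theta\geq\theta_3^*(\dt)$ large then yields a contradiction, hence $t^*=T_e$. Only afterward is the (possibly $\theta$-large) constant $\bar M$ invoked, solely to record the exponential bound \eqref{eq:exp_x}; it plays no role in confining $x$ to $\KK_x'$.
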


\begin{proof}

Reasoning as in the proof of Proposition~\ref{lem:les}, we have that for almost all $t\in(0, t^*)$,
\begin{align}
    \frac{\dd}{\dd t}V(x(t))
    \leq -\alpha_4|x(t)|^2 + \alpha_3\lip_{f}|x(t)||\mu(t)\RR(t)v^*(s(t)) - \lambda(x(t))|.\label{eq:V1e}
\end{align}
Assuming $\Delta<\Delta^*_1$, from Lemma~\ref{L:gap_output_feedback}, we obtain that for all $i\in\N$ and almost all $t\in(\tau_i,\tau_{i+1})$ such that $t<t^*$,
\begin{align*}
    \frac{\dd}{\dd t}V(x(t))
    &\leq -(\alpha_4-\alpha_3\alpha_{5}(\dt)\lip_{f})|x(t)|^2
    + \alpha_3\lip_f\alpha_{7}(\dt)|x(t)||e(\tau_i)|
    \\
    &\leq -(\alpha_4-\alpha_3\alpha_{5}(\dt)\lip_{f})|x(t)|^2
    + \alpha_3\lip_f\alpha_{7}(\dt)(\frac{\eta}{2}|x(t)|^2+\frac{1}{2\eta}|e(\tau_i)|^2)
\end{align*}
for all $\eta>0$, by Young's inequality. Picking $\eta = \frac{\alpha_4-\alpha_3\alpha_{5}(\dt)\lip_{f}}{\alpha_3\lip_f\alpha_{7}(\dt)}$ yields
\begin{align}
    \frac{\dd}{\dd t}V(x(t))
    &\leq -\frac{\alpha_4-\alpha_3\alpha_{5}(\dt)\lip_{f}}{2\alpha_1}V(x(t))
    + \frac{(\alpha_3\lip_f\alpha_{7}(\dt))^2}{2(\alpha_4-\alpha_3\alpha_{5}(\dt)\lip_{f})}|e(\tau_i)|^2.
    \label{eq:V30}
\end{align}
Let $r\in(0, \bar r)$ be such that $V(\xi)\leq r$ for all $\xi\in\KK_x$. Reasoning as in the proof of \eqref{eq:V7bis}, we obtain that for all $t\in[0, \min(\tau_1, t^*))$,
\begin{equation*}
 V(x(t))\leq \e^{2\dt\frac{\alpha_4-\alpha_3\alpha_{5}(\dt)\lip_{f}}{\alpha_1}} \e^{2\dt\frac{\alpha_3\lip_f}{\alpha_1}}\Big(\sqrt{r}
    +2\dt \frac{\lip_f \|v^*\|_{\infty}}{2\sqrt{\alpha_1}}
    \bar\lambda
    \Big)^2
\end{equation*}
Set $\Delta_4^*\in (0,T]$ to be such that if $\Delta\in (0,\Delta_4^*]$ then 
$
\e^{2\dt\frac{\alpha_4-\alpha_3\alpha_{5}(\dt)\lip_{f}}{\alpha_1}} \e^{2\dt\frac{\alpha_3\lip_f}{\alpha_1}}\Big(\sqrt{r}
    +2\dt \frac{\lip_f \|v^*\|_{\infty}}{2\sqrt{\alpha_1}}
    \bar\lambda
    \Big)^2
    <\bar r
$
(with $\bar r$ defined in Section~\ref{S:Hybrid dynamic output feedback design}). Then assuming that $\Delta\in (0,\Delta_4^*]$ yields that $V(x(\min(\tau_1,t^*)))<\bar r$, while by definition $V(x(t^*))=\bar r $ if $t^*<\infty$. Hence $t^*>\tau_1$. Assume that $t^*<T_e$. Then $V(x(t^*))=\bar r$ and $\frac{\dd}{\dd t}V(x(t^*)) \geq 0$. Equation \eqref{eq:V30} yields
\begin{align}\label{eq:Ve1}
    \frac{\alpha_4-\alpha_3\alpha_{5}(\dt)\lip_{f}}{2\alpha_1}\bar r
    \leq \frac{(\alpha_3\lip_f\alpha_{7}(\dt))^2}{2(\alpha_4-\alpha_3\alpha_{5}(\dt)\lip_{f})}|e(\tau_i)|^2.
\end{align}
where $\tau_1\leq\tau_i<t^*\leq \tau_{i+1}$. By \eqref{eq:Vebis},
\begin{align}\label{eq:Ve2}
|e(\tau_i)|\leq 
\e^{-\frac{\theta \omega\dt }{2}} (c \theta^{q})^2 \lip_{\HH_q} \lip_{\sat}\lip_{\phi} (|z_0| + \lip_{\HH_q}(|x_0| + \|v^*\|_\infty |\mu_0|)).
\end{align}
Set $\theta_3^*\geq\theta_2^*$ to be such that for all $\theta\geq\theta_3^*$ we have
$$
\e^{\theta \omega\dt}>\frac{\alpha_1}{\bar r}
    \left(\frac{\alpha_3\lip_f\alpha_{7}(\dt)}{\alpha_4-\alpha_3\alpha_{5}(\dt)\lip_{f}} (c \theta^{q})^2 \lip_{\HH_q} \lip_{\sat}\lip_{\phi} (|z_0| + \lip_{\HH_q}(|x_0| + \|v^*\|_\infty \bar\lambda))\right)^2.
$$
Then having $\theta\geq\theta_3^*$ contradicts \eqref{eq:Ve1}-\eqref{eq:Ve2}. 
We now assume $\theta\geq\theta_3^*$ and thus $t^*=T_e$, i.e. $x$ remains in $\KK_x'$.

From \eqref{eq:V30}, we get by Grönwall's inequality that for all $i\in\N$ and almost all $t\in(\tau_i,\tau_{i+1})$ such that $t<t^*$,
\begin{align}
    V(x(t))
    &\leq \e^{-(t-\tau_0)\frac{\alpha_4-\alpha_3\alpha_{5}(\dt)\lip_{f}}{2\alpha_1}}
    V(x(\tau_0)) 
    \nonumber\\
    &\quad+\frac{(\alpha_3\lip_f\alpha_{7}(\dt))^2}{2(\alpha_4-\alpha_3\alpha_{5}(\dt)\lip_{f})}
    \sum_{j=0}^i\int_{\tau_j}^{\min(\tau_{j+1}, t)}\e^{-(t-\tau)\frac{\alpha_4-\alpha_3\alpha_{5}(\dt)\lip_{f}}{2\alpha_1}}|e(\tau_j)|^2
    \dd\tau
    \nonumber\\
    &\leq \e^{-(t-\dt)\frac{\alpha_4-\alpha_3\alpha_{5}(\dt)\lip_{f}}{2\alpha_1}}
    V(x(\tau_0)) 
    \nonumber\\
    &\quad+\frac{(\alpha_3\lip_f\alpha_{7}(\dt))^2}{2(\alpha_4-\alpha_3\alpha_{5}(\dt)\lip_{f})}
    \dt
    \Big(
    |e(\tau_i)|^2
    +
    \sum_{j=0}^{i-1}\e^{-(t-\tau_{j+1})\frac{\alpha_4-\alpha_3\alpha_{5}(\dt)\lip_{f}}{2\alpha_1}}|e(\tau_j)|^2
    \Big),
    \label{eq:V6bis}
\end{align}
since $\tau_0<\dt$.

On the other hand, if $\tau_0>0$, then for all $t\in(0, \min(\tau_0, t^*))$, inequality \eqref{eq:V7bis} still holds.
Combining \eqref{eq:V7bis} and \eqref{eq:V6bis}, we obtain that for all $t\in[0, t^*)$,
\begin{align}
    V(x(t))
    &\leq \e^{-(t-\dt)\frac{\alpha_4-\alpha_3\alpha_{5}(\dt)\lip_{f}}{2\alpha_1}}
    \Big(\sqrt{V(x_0)}\e^{\dt\frac{\alpha_3\lip_f}{2\alpha_1}}
    +\dt \frac{\lip_f \|v^*\|_{\infty}}{2\sqrt{\alpha_1}}
    \e^{\dt\frac{\alpha_3\lip_f}{2\alpha_1}}|\mu_0|
    \Big)^2
    \nonumber\\
    &\quad+
    \frac{(\alpha_3\lip_f\alpha_{7}(\dt))^2}{2(\alpha_4-\alpha_3\alpha_{5}(\dt)\lip_{f})}
    \dt
    \Big(
    |e(\tau_i)|^2
    +
    \sum_{j=0}^{i-1}\e^{-(t-\tau_{j+1})\frac{\alpha_4-\alpha_3\alpha_{5}(\dt)\lip_{f}}{2\alpha_1}}|e(\tau_j)|^2
    \Big)
    \label{eq:V20}
\end{align}
By \eqref{eq:Ve}, we get
\begin{align}
    |e(\tau_i)|^2
    &+
    \sum_{j=0}^{i-1}\e^{-(t-\tau_{j+1})\frac{\alpha_4-\alpha_3\alpha_{5}(\dt)\lip_{f}}{2\alpha_1}}|e(\tau_j)|^2
    \nonumber\\
    &\leq
    (
    S+
    \e^{-\theta \omega t}\e^{\theta \omega \dt}
    )
     \e^{\theta \omega \dt} ((c \theta^{q})^2 \lip_{\HH_q} \lip_{\sat}\lip_{\phi})^2 |e(0)|^2.
    \label{eq:V21a}
\end{align}
where $S:=\sum_{j=0}^{i-1}\e^{-(t-\tau_{j+1})\frac{\alpha_4-\alpha_3\alpha_{5}(\dt)\lip_{f}}{2\alpha_1}}\e^{-\theta \omega \tau_j}$.
Moreover, since $\tau_j = \dt-s_0+j\dt$,
\begin{align}
    S
    &= 
    \sum_{j=0}^{i-1}\e^{-(t-(\dt-s_0+(j+1)\dt))\frac{\alpha_4-\alpha_3\alpha_{5}(\dt)\lip_{f}}{2\alpha_1}}\e^{-\theta \omega (\dt-s_0+j\dt)}
    \nonumber\\
    &=
    \e^{-(t-(2\dt-s_0))\frac{\alpha_4-\alpha_3\alpha_{5}(\dt)\lip_{f}}{2\alpha_1}-\theta\omega(\dt-s_0)}
    \sum_{j=0}^{i-1}\e^{j\dt(\frac{\alpha_4-\alpha_3\alpha_{5}(\dt)\lip_{f}}{2\alpha_1}-\theta \omega)}
    \nonumber
\end{align}
Define $\alpha_{9}(\dt, \theta) = \frac{\alpha_4-\alpha_3\alpha_{5}(\dt)\lip_{f}}{2\alpha_1}-\theta \omega$. We distinguish three cases.
\begin{itemize}
    \item If $\alpha_{9}(\dt, \theta)>0$, then
\begin{align}
    S
    &=
    \e^{-(t-(2\dt-s_0))\frac{\alpha_4-\alpha_3\alpha_{5}(\dt)\lip_{f}}{2\alpha_1}-\theta\omega(\dt-s_0)}
    \frac{\e^{i\dt\alpha_{9}(\dt, \theta)}-1}{\e^{\dt\alpha_{9}(\dt, \theta)}-1}
    \nonumber\\
    &\leq
    \e^{(i\dt-t)\frac{\alpha_4-\alpha_3\alpha_{5}(\dt)\lip_{f}}{2\alpha_1}-i\dt\theta \omega}
    \frac{\e^{2\dt\frac{\alpha_4-\alpha_3\alpha_{5}(\dt)\lip_{f}}{2\alpha_1}}
    }{\e^{\dt\alpha_{9}(\dt, \theta)}-1}
    \nonumber\\
    &\leq
    \e^{-i\dt\theta \omega}
    \frac{\e^{2\dt\frac{\alpha_4-\alpha_3\alpha_{5}(\dt)\lip_{f}}{2\alpha_1}}
    }{\e^{\dt\alpha_{9}(\dt, \theta)}-1}
    \nonumber\\
    &\leq
    \e^{-\theta \omega t}
    \frac{\e^{2\dt(\frac{\alpha_4-\alpha_3\alpha_{5}(\dt)\lip_{f}}{2\alpha_1}+\theta\omega)}
    }{\e^{\dt\alpha_{9}(\dt, \theta)}-1};
    \nonumber
\end{align}
\item If $\alpha_{9}(\dt, \theta)<0$, then
\begin{align}
    S
    &=
    \e^{-(t-(2\dt-s_0))\frac{\alpha_4-\alpha_3\alpha_{5}(\dt)\lip_{f}}{2\alpha_1}-\theta\omega(\dt-s_0)}
    \frac{1-\e^{i\dt\alpha_{9}(\dt, \theta)}}{1-\e^{\dt\alpha_{9}(\dt, \theta)}}
    \nonumber\\
    &\leq
    \e^{-t\frac{\alpha_4-\alpha_3\alpha_{5}(\dt)\lip_{f}}{2\alpha_1}}
    \frac{\e^{2\dt\frac{\alpha_4-\alpha_3\alpha_{5}(\dt)\lip_{f}}{2\alpha_1}}
    }{1-\e^{\dt\alpha_{9}(\dt, \theta)}};
    \nonumber
\end{align}
\item If $\alpha_{9}(\dt, \theta)=0$, then
\begin{align}
    S
    &=
    i\e^{-(t-(2\dt-s_0))\frac{\alpha_4-\alpha_3\alpha_{5}(\dt)\lip_{f}}{2\alpha_1}-\theta\omega(\dt-s_0)}
    \nonumber\\
    &\leq
    \frac{t}{\dt}
    \e^{-t\frac{\alpha_4-\alpha_3\alpha_{5}(\dt)\lip_{f}}{2\alpha_1}}
    \e^{2\dt\frac{\alpha_4-\alpha_3\alpha_{5}(\dt)\lip_{f}}{2\alpha_1}}.
    \nonumber
\end{align}
\end{itemize}
Hence, in any case, we get from \eqref{eq:V20}, \eqref{eq:V21a} and the above inequalities that there exist two positive constant $\alpha_{10}(\dt, \theta)$ and $\alpha_{11}(\dt, \theta)$ such that for all $t\in[0, t^*)$,
\begin{align}
    V(x(t))
    &\leq \e^{-t\frac{\alpha_4-\alpha_3\alpha_{5}(\dt)\lip_{f}}{2\alpha_1}}
    \e^{\dt\frac{\alpha_4-\alpha_3\alpha_{5}(\dt)\lip_{f}}{2\alpha_1}}
    \Big(\sqrt{V(x_0)}\e^{\dt\frac{\alpha_3\lip_f}{2\alpha_1}}
    +\dt \frac{\lip_f \|v^*\|_{\infty}}{2\sqrt{\alpha_1}}
    \e^{\dt\frac{\alpha_3\lip_f}{2\alpha_1}}|\mu_0|
    \Big)^2
    \nonumber\\
    &\quad+
    \e^{-t\alpha_{10}(\dt, \theta)}
    \alpha_{11}(\dt, \theta)|e(0)|^2,
    \nonumber
\end{align}
hence,
\begin{align}
    |x(t)|
    &\leq \e^{-t\frac{\alpha_4-\alpha_3\alpha_{5}(\dt)\lip_{f}}{4\alpha_1}}
    \e^{\dt\frac{\alpha_4-\alpha_3\alpha_{5}(\dt)\lip_{f}}{4\alpha_1}}
    \Big(\sqrt{\frac{\alpha_2}{\alpha_1}}\e^{\dt\frac{\alpha_3\lip_f}{2\alpha_1}}
    |x_0|
    +\dt \frac{\lip_f \|v^*\|_{\infty}}{2\alpha_1}
    \e^{\dt\frac{\alpha_3\lip_f}{2\alpha_1}}|\mu_0|
    \Big)
    \nonumber\\
    &\quad+
    \e^{-t\frac{\alpha_{10}(\dt, \theta)}{2}}
    \sqrt{\frac{\alpha_{11}(\dt, \theta)}{\alpha_1}}(
    |z_0| + \lip_{\HH_q}(|x_0| + \|v^*\|_\infty |\mu_0|)).
    \nonumber
\end{align}
Thus, it has been proved that $x$ remains in $\KK_x'$ over $[0, T_e)$, 
and \eqref{eq:exp_x} is satisfied for a suitable choice of $(\bar M, \bar \nu)$.
\end{proof}

With Lemmas \ref{lem:exp_e} and \ref{lem:exp_x}, we are now able to conclude the proof of Theorem \ref{th:main}.

\begin{proof}[Proof of Theorem \ref{th:main}]
We asume that $\Delta\in(0, \Delta_4^*]$ and $\theta>\theta_3^*$ (see Lemma~\ref{lem:exp_x}).
Clearly, $\mu$ remains in $[0, \bar\lambda]$. Moreover,
for all $i\in\N$ such that $\tau_i<T_e$,
\begin{align*}
    \mu^+(\tau_i)
    &= |\lambda(\sat(\phi(z(\tau_i), \Delta, \mu(\tau_i), \RR(\tau_i))))|
    \\
    &\leq |\lambda(\sat(\phi(z(\tau_i), \Delta, \mu(\tau_i), \RR(\tau_i)))) - \lambda(x(t))| + |\lambda(x(t))|
    \\
    &= |\lambda(\sat(\phi(z(\tau_i), \Delta, \mu(\tau_i), \RR(\tau_i)))) - \lambda(\sat(\phi(\HH_q(x(\tau_i), \dt, \mu(\tau_i)\RR(\tau_i) v^*), \dt, \mu(\tau_i), \RR(\tau_i))))|
    \\
    &\quad+ |\lambda(x(t))|
    \\
    &\leq \lip_\lambda\lip_{\sat}\lip_\phi|z(\tau_i)-\HH_q(x(\tau_i), \dt, \mu(\tau_i)\RR(\tau_i) v^*)| + \lip_\lambda|x(\tau_i)|
    \\
    &= \lip_\lambda\lip_{\sat}\lip_\phi|e(\tau_i)| + \lip_\lambda|x(\tau_i)|
\end{align*}
Since $e$ and $x$ are exponentially stable at $0$ by Lemmas \ref{lem:exp_e} and \ref{lem:exp_x}, so is $\mu$.
Finally, for all $(t,i)\in E$,
\begin{align*}
    |z(t, i)|
    &\leq |e(t, i)| + |\HH_q(x(t, i), s(t, i), \mu(t, i)\RR(t, i) v^*)|
    \\
    &\leq |e(t, i)| + |\HH_q(x(t, i), \dt, \mu(t, i)\RR(t, i) v^*) - \HH_q(0, \dt, 0)|
    \\
    &\leq |e(t, i)| + \lip_{\HH_q}(|x(t, i)| + \|v^*\|_\infty |\mu(t, i)|)
\end{align*}
Since $\mu$, $e$ and $x$ are bounded over $[0, T_e)$ by the reasoning above and Lemmas \ref{lem:exp_e} and \ref{lem:exp_x} respectively, so is $z$.
Finally, $(x,z,s,\mu,\RR)$ remains bounded, thus $T_e=+\infty$.
Hence, according to the steps above, $e$, $x$ and $\mu$ are
exponentially stable at $0$.
Hence, $z$ is also exponentially stable at $0$, which concludes the proof of Theorem \ref{th:main}.
\end{proof}

\section{Universality theorem}
\label{sec:thuniv}

Universal inputs are defined by Sussmann in \cite{MR0541865} as inputs for which the resulting output of a system such as \eqref{syst} allows to distinguish between any two states that may be distinguished. In other words, if $v$ is a universal input, for two points $x_a, x_b\in \R^n$, if there exists an input $u$ for which $t\mapsto h(X(x_a,t,u))$ and $t\mapsto h(X(x_a,t,u))$ do not coincide, then $t\mapsto h(X(x_a,t,v))$ and $t\mapsto h(X(x_a,t,v))$ should also not coincide.
In \cite{MR0541865}, it is proved that for analytic systems
there always exist analytic inputs, and that universality is a generic property.
In this section, we lift arguments from \cite{MR0541865} to provide a proof of our own universality theorem, Theorem~\ref{lem:sus_bis}. It is a stronger result that follows from stronger assumptions (mainly that the system is strongly differentialy observable for the null-input $0$).
In \cite{MR0541865}, Sussmann works under numbered assumptions (H1) to (H4). Assumptions (H1)-(H3)-(H4) correspond in our case to our analyticity assumption. For our needs, we switch (H2) for the more restrictive assumption that the set of admissible values for the controls is $\R^p$.

We assume in the rest of the section that $\T>0$ is a fixed time frame for which we discuss the properties of inputs in the set $C^\infty([0,\T],\R^p)$ endowed with the compact-open topology.

\begin{definition}
Let $k$ be a positive integer. We say that a jet $\sigma\in \left(\R^p\right)^{k} $ \emph{distinguishes} a pair  $(x_a,x_b)$ in $\R^n\times \R^n$,  if there exists an integer $0\leq j\leq k$ such that  $H_{j}(x_a,\sigma)\neq H_{j}(x_b,\sigma)$. By extension, an input $u\in C^\infty([0,\T],\R^p)$ is said to \emph{distinguish} $(x_a,x_b)$, if there exists a positive integer $k$, a time $t\in [0,\T]$ such that $\mathcal{H}_k(x_a,t,u)\neq \mathcal{H}_k(x_b,t,u)$. That is, the jet of order $k-1$ of $u$ at $t$ distinguishes $(x_a,x_b)$. A pair $(x_a,x_b)\in \R^n\times \R^n$ is said to be \emph{distinguishable} if there exists a jet (that can be of any order) that distinguishes it.
\end{definition}

Let $\diag=\{(x,x)\mid x\in \R^n\}$ denote the diagonal of the state space. Any pair $(x,x)$ cannot be distinguished, but
under Assumption~\ref{ass:obs0}, any pair in $(\R^n\times \R^n)\setminus \diag$ is distinguishable.  We also define 
for any $\eta>0$, a neighborhood of the diagonal
\begin{equation}\label{E:defDeta}
\diag_\eta=\{(x_a,x_b)\in {(\R^n)}^2\mid |x_a-x_b|\leq \eta\}.    
\end{equation}
Over the whole section, $\Kp$ denotes pairs of points of the state space $\R^n$. For sets $\Kp\subset {(\R^n)}^2$ we define the notation $\uKp$ to denote the projection of $\Kp$ onto $\R^n$, with respect to both the first and second element in the pair, that is:
\begin{equation}\label{E:defuKp}
    \uKp=
    \left\{
        x\in \R^n\mid \left(\{x\}\times\R^n
        \cup \R^n\times \{x\}\right)\cap \Kp\neq \emptyset
    \right\}.
\end{equation}
Naturally, if $\Kp$ is compact, $\uKp$ is also compact.
\begin{definition}\label{D:Z'}
The input $u\in C^\infty([0,\T],\R^p)$ is said to be universal over $\Kp\subset \R^n\times \R^n$ if it distinguishes any distinguishable pair in $\Kp$. 
Following Sussmann,  we denote by $Z(\T,\Kp)$ 
the set of inputs that are universal 
over $\Kp$.

For our purposes, we denote by $Z'(\T,\Kp)$ the set of inputs $v\in C^\infty([0,\T],\R^p)$ such that there exist
$T\in (0,\T]$, $q$ integer, for which the map
$(x_a,x_b)\mapsto \mathcal{H}_{q}(x_b,t,\mu \RR v)-\mathcal{H}_{q}(x_a,t,\mu \RR v)$
never vanishes over $\Kp\setminus \diag$, for all $t\in [0,T]$, for all $\mu \in [0,1]$ and all $\RR\in \O(p)$, and $\frac{\partial\mathcal{H}_{q}}{\partial x}(x,t,\mu \RR v)$ is injective for all $x\in \uKp$, all $t\in [0,T]$, all $\mu \in [0,1]$ and all $\RR\in \O(p)$

\end{definition}

\begin{remark}\label{R:defVueps}
For any $u\in C^\infty([0,\T],\R^p)$, $k\in \N$, $\varepsilon_0,\dots,\varepsilon_k$ positive constants, we define
\begin{equation}\label{E:defVueps}
    V(u,\varepsilon_0,\dots,\varepsilon_k)
    =
    \left\{
    v\mid
    \|v^{(j)}-u^{(j)}\|_{\infty}<\varepsilon_j,\text{~for all~} j=0, \dots, k
    \right\}.
\end{equation}
The family of these sets is a fundamental system of neighborhoods of $u$ in the compact-open topology.
\end{remark}

\begin{lemma}
\label{L:ZTKopen}
If $\Kp\subset \R^n\times \R^n$ is compact then $Z'(\T,\Kp)$ is open for the compact-open topology.
\end{lemma}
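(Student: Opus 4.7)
The plan is to fix an arbitrary $v \in Z'(\T, \Kp)$ with witnesses $T \in (0, \T]$ and $q \in \N$, and to exhibit a neighborhood $V(v, \varepsilon_0, \dots, \varepsilon_{q-1})$ of $v$ for the compact-open topology (cf.\ Remark~\ref{R:defVueps}) that is entirely contained in $Z'(\T, \Kp)$, using the same witnesses $T, q$ for every $\tilde v$ in this neighborhood. The key observation is that $(x, t, \mu, \RR, v) \mapsto \HH_q(x, t, \mu \RR v)$ only depends on $v$ through its jet $(v(t), v'(t), \dots, v^{(q-1)}(t))$, so that perturbations of $v$ controlled in the $C^{q-1}$ uniform norm induce uniform perturbations of $\HH_q$ and of $\frac{\partial \HH_q}{\partial x}$, $\frac{\partial^2 \HH_q}{\partial x^2}$ (which are analytic in $x$) on any compact set of $(x, t, \mu, \RR)$.

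The first step is to control the injectivity of $\frac{\partial \HH_q}{\partial x}(x, t, \mu\RR v)$. Since this linear map is continuous and injective on the compact set $\mathcal{C} := \uKp \times [0, T] \times [0, 1] \times \O(p)$, its smallest singular value attains a positive minimum $c_1 > 0$ on $\mathcal{C}$. By uniform continuity of the partial derivative in $v$, there exist $\varepsilon_0, \dots, \varepsilon_{q-1} > 0$ such that for any $\tilde v \in V(v, \varepsilon_0, \dots, \varepsilon_{q-1})$, the corresponding smallest singular value stays above $c_1/2$ uniformly on $\mathcal{C}$, preserving injectivity.

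The second step is to control the non-vanishing of $\HH_q(x_b, t, \mu\RR \tilde v) - \HH_q(x_a, t, \mu\RR \tilde v)$ on $\Kp \setminus \diag$, which I would split using the diagonal neighborhood $\diag_\eta$ from \eqref{E:defDeta}. On the compact set $\Kp \setminus \diag_\eta$ (for a fixed $\eta > 0$ to be determined), the difference is bounded below by some $c_2(\eta) > 0$ uniformly in $(t, \mu, \RR)$ for $v$, and by uniform continuity remains bounded below by $c_2(\eta)/2$ for $\tilde v$ in a sufficiently small compact-open neighborhood of $v$. On the remaining set $(\diag_\eta \cap \Kp) \setminus \diag$, I use the Taylor expansion
\[
\HH_q(x_b, \cdot) - \HH_q(x_a, \cdot) = \frac{\partial \HH_q}{\partial x}(x_a, \cdot)(x_b - x_a) + R(x_a, x_b, \cdot),
\]
where $|R| \leq C |x_b - x_a|^2$ for some $C > 0$ uniform over $\mathcal{C}$ and for $\tilde v$ in a bounded neighborhood of $v$ (using $C^2$-dependence of $\HH_q$ in $x$ and compactness). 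Combined with the injectivity bound from the first step, this yields
\[
|\HH_q(x_b, \cdot) - \HH_q(x_a, \cdot)| \geq \tfrac{c_1}{2} |x_b - x_a| - C |x_b - x_a|^2,
\]
which is strictly positive for $(x_a, x_b) \notin \diag$ as soon as $\eta < c_1/(2C)$. Fixing such an $\eta$ and finally shrinking $\varepsilon_0, \dots, \varepsilon_{q-1}$ so that all constraints above hold simultaneously, I obtain a compact-open neighborhood of $v$ contained in $Z'(\T, \Kp)$.

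The main technical delicacy is that $\Kp \setminus \diag$ is not compact, which is why a direct continuity argument does not suffice: one must complement a compactness bound far from the diagonal with a quantitative use of the differential injectivity close to the diagonal, linking the two parts of the definition of $Z'(\T,\Kp)$. The rest of the argument is a routine application of uniform continuity on compact sets.
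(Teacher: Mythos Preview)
Your proposal is correct and follows essentially the same approach as the paper: both arguments handle the non-compactness of $\Kp\setminus\diag$ by splitting into a compact ``far from diagonal'' part (where a direct uniform lower bound works) and a ``near diagonal'' part (where the Jacobian injectivity yields a linear lower bound via Taylor expansion). The paper packages this two-part argument into its technical Lemma~\ref{lem:iminj} (items~(\ref{item1})--(\ref{item2})), treating the input $u$ itself as a parameter in $\Pi$ and extracting a continuous $\rho(u)$ with $\rho(v)>0$, whereas you reprove the relevant estimates by hand for fixed $v$ and then perturb; the mathematical content is the same. One minor notational point: with the paper's convention $\diag_\eta=\{|x_a-x_b|\leq\eta\}$ closed, the set $\Kp\setminus\diag_\eta$ is not compact --- you want $\Kp\setminus\mathring{\diag}_\eta$ (equivalently $\Kp\cap\{|x_a-x_b|\geq\eta\}$), and for the Taylor remainder bound the second derivatives should be controlled on an $\eta$-neighborhood of $\uKp$ rather than on $\uKp$ itself, but these are routine adjustments.
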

\begin{proof}
Fix any $v\in Z'(\T,\Kp)$, $T>0$ and $q$ integer as in Definition~\ref{D:Z'}.
By construction (see Section~\ref{sec:def-Hq}), $\mathcal{H}_q$ is an analytic function of the jets of order $q-1$ of $v$, and therefore is continuous with respect to $(x,t,\mu,\RR,u)\subset \R^n\times [0,T]\times [0,1]\times\O(p)\times C^\infty([0,\T],\R^p)$, with each finite dimensional set in the product assumed to be endowed with their canonical topology, and the compact-open topology for $C^\infty([0,\T],\R^p)$. The same holds for $\frac{\partial \mathcal{H}_q}{\partial x}$.
Applying Lemma \ref{lem:iminj}, items (\ref{item1})-(\ref{item2}), with $\Pi=[0,T]\times [0,1]\times\O(p)\times C^\infty([0,\T],\R^p)$ and $F_\pi(x)=\HH_{q}(x,t,\mu\RR u)$ for $\pi=(t,\mu,\RR, u)$ yields the existence of 
$$
\rho(u)=\min_{\pi\in [0,T]\times [0,1]\times\O(p)\times \{u\}} \min\big(\rho_2(\pi),\rho_1(\pi,\eta_2(\pi)/2)\big) 
$$
continuous with respect to $u$ and such that 
\[
\left|
    \HH_{q}(x_a,t,\mu\RR u)
    -\HH_{q}(x_b,t,\mu\RR u)
\right|
\geq \rho(u)|x_a-x_b|.
\]
Since $v\in Z'(\T,\Kp)$, $\rho(v)>0$ and there exists a $\varepsilon_0, \dots ,\varepsilon_{q-1}$ such that if $u\in V(v,\varepsilon_0,\dots ,\varepsilon_{q-1})$ (as in \eqref{E:defVueps}), then $\rho(u)>0$.
Consequently,
for any $u\in V(v,\varepsilon_0,\dots ,\varepsilon_{q-1})$, $t\in [0,T]$, $\mu\in[0,1]$, $\RR\in \O(p)$, $(x_a,x_b)\mapsto \mathcal{H}_{q}(x_b,t,\mu \RR v)-\mathcal{H}_{q}(x_a,t,\mu \RR u)$ never vanishes over  $\Kp\setminus\diag$. Up to reducing $\varepsilon_0,\dots ,\varepsilon_{q-1}$, we also recover the injectivity of the Jacobian over $\uKp$,
which concludes the openness argument.
\end{proof}

\begin{remark}
In \cite{MR0541865}, openness of the set $Z(\T,\Kp)$ of universal inputs over $\Kp$ cannot be proved because of the issue at the diagonal. Even if, for a given $v$, $(x_a,x_b)\mapsto \mathcal{H}_{q}(x_b,0, v)-\mathcal{H}_{q}(x_a,0,v)$ vanishes only when $x_a=x_b$ in $\Kp$, there may exists arbitrarily small perturbations $\delta v$ such that $(x_a,x_b)\mapsto\mathcal{H}_{q}(x_b,0, v+\delta v)-\mathcal{H}_{q}(x_a,0,v+\delta v)$ has nontrivial vanishing points off the diagonal (as with the Whitney pleat for instance \cite{MR2896292}). The injectivity requirement on the Jacobian  of $\mathcal{H}_{q}$ is crucial to avoid that difficulty.
\end{remark}

In \cite{MR0541865}, the following is proved (see Theorem~2.2 and proof of Theorem~2.2 in Section~3).

\begin{theorem}[Sussman's generic universality theorem, restated]\label{thm:Sus_restated}
    Under analyticity assumption.
    Let $\T>0$. Let $\Kp$ be a compact in the set of distinguishable pairs in $\R^n\times \R^n$. The set $\mathring{Z}(\T,\Kp)$ is dense in $C^\infty([0,\T],\R^p)$. As a consequence, in $C^\infty([0,\T],\R^p)$, the set of inputs that distinguish between any distinguishable pair $\R^n\times \R^n$,  contains a  countable intersection of open sets that is dense in $C^\infty([0,\T],\R^p)$.
\end{theorem}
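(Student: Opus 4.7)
The plan is to prove the first assertion (denseness of $\mathring{Z}(\T,\Kp)$ for fixed compact $\Kp$ of distinguishable pairs) by combining an analyticity-based reduction to a finite jet order with a transversality-type perturbation argument, and then to deduce the second (global) assertion by a Baire category argument applied to a countable compact exhaustion of the set of distinguishable pairs.

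Granting the first assertion, I would handle the second as follows. The set of distinguishable pairs is contained in $\R^n\times\R^n\setminus\diag$, and can be written as $\bigcup_k \Kp_k$ with each $\Kp_k$ compact and disjoint from $\diag$: for instance, intersect the distinguishable set with the closed ball of radius $k$ in $\R^n\times\R^n$ and remove the $\frac{1}{k}$-neighborhood $\diag_{1/k}$ of the diagonal, as introduced in \eqref{E:defDeta}. Since $C^\infty([0,\T],\R^p)$ with the compact-open topology is a Fréchet space, hence Baire, the countable intersection $\bigcap_k \mathring{Z}(\T,\Kp_k)$ is a dense $G_\delta$, and it lies inside the set of inputs distinguishing every distinguishable pair.

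For the first assertion, fix $\Kp$ compact of distinguishable pairs. The approach has three steps. First, I reduce to a finite jet order: by compactness of $\Kp$ combined with analyticity, in the spirit of Lemma~\ref{lem:analytic}, there exists an integer $q$ (depending on $\Kp$) such that every pair in $\Kp$ admits a distinguishing jet of order at most $q$. Second, fixing $u_0\in C^\infty([0,\T],\R^p)$ and a compact-open neighborhood $V(u_0,\varepsilon_0,\dots,\varepsilon_q)$ as in \eqref{E:defVueps}, the goal is to produce $v\in V$ and a time $t_*\in[0,\T]$ with $\HH_q(x_a,t_*,v)\neq \HH_q(x_b,t_*,v)$ for all $(x_a,x_b)\in\Kp$. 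Working with analytic inputs (dense in $C^\infty$ in the compact-open topology), the ``bad set'' $B(u)=\{(x_a,x_b)\in\Kp \mid \HH_q(x_a,0,u)=\HH_q(x_b,0,u)\}$ is a compact analytic subvariety of $\Kp$ depending only on the Taylor jet of $u$ at $0$. I perturb that jet polynomially in a direction transverse to the highest-dimensional irreducible component of $B(u)$, which strictly lowers $\dim B$; iterating and invoking the Noetherian descending chain condition on analytic subvarieties of $\Kp$, the process terminates in finitely many steps with $B(v)=\emptyset$. Third, openness is automatic: the condition ``$\HH_q(x_a,t_*,v)\neq\HH_q(x_b,t_*,v)$ for every $(x_a,x_b)\in\Kp$'' is preserved under small $C^{q-1}$ perturbations of $v$ thanks to compactness of $\Kp$ and continuity of $\HH_q$, exactly as in the argument of Lemma~\ref{L:ZTKopen}.

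The main obstacle lies in the iterative perturbation of the second step: eliminating one component of $B(u)$ must not create new vanishing loci elsewhere in $\Kp$. Sussmann's resolution exploits analyticity twice. First, because $\HH_q$ and the perturbations are analytic, $B(u)$ is an analytic subvariety (not just a closed set), with a locally finite decomposition into irreducible components of controlled dimension. Second, at each irreducible component the analytic perturbation directions that are \emph{not} tangent form an open dense subset of admissible directions, so a generic choice strictly reduces $\dim B$, and the descending chain condition guarantees finite termination. Making this rigorous — in particular, controlling the polynomial perturbations so they sit inside the prescribed compact-open neighborhood while having jets spanning enough transverse directions — is the technical heart of \cite{MR0541865}, and I would follow Sussmann's construction essentially verbatim.
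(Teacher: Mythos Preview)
Your Baire-category reduction for the global assertion is correct and matches the standard approach: exhaust the distinguishable pairs by compacts $\Kp_k$ away from the diagonal, and intersect the dense open sets $\mathring{Z}(\T,\Kp_k)$.

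The gap is in your second step. Your choice of $q$ --- large enough that every pair in $\Kp$ is distinguished by \emph{some} $q$-jet --- only ensures that for each fixed $(x_a,x_b)\in\Kp$ the fibre $\{\sigma:(x_a,x_b,\sigma)\in A_q\}$ is a proper analytic subvariety of jet space. This gives $\operatorname{codim}A_q\geq 1$, nothing more. Your iterative perturbation argument then needs that, for generic $\sigma$, the bad set $B(\sigma)=\{(x_a,x_b)\in\Kp:(x_a,x_b,\sigma)\in A_q\}$ is empty. But consider the incidence variety $I=A_q\cap(\Kp\times(\R^p)^q)$ and its projection to jet space: if $\operatorname{codim}I\leq 2n$, then $\dim I\geq pq$, the projection may be dominant, and \emph{every} $q$-jet has $B(\sigma)\neq\emptyset$. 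In that regime no sequence of perturbations can empty $B$, and the descending-chain argument stalls. The step ``perturb transverse to the top component of $B(u)$, strictly lowering $\dim B$'' is exactly where this fails: removing one point of a component does not prevent the family $B(\sigma)$ from having a component of the same dimension for all nearby $\sigma$.

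Sussmann's actual mechanism is different from what you describe. The technical heart is not an iterative descent on $B(u)\subset\Kp$, but the growth lemma (Proposition~\ref{P:growth}): $\operatorname{codim}_{E_k}(A_k\cap E_k)\to\infty$ as $k\to\infty$. This is driven by the jet extension lemma (Lemma~\ref{L:Sus_extension}): every partial jet $\sigma$ can be \emph{extended} to a longer jet $\tau$ that distinguishes a given pair. Once $\operatorname{codim}A_k>2n$, a Sard-type argument shows that the projection of $A_k\cap(\Kp\times(\R^p)^k)$ onto jet space misses an open dense set, and any input with jet in that set is universal over $\Kp$ (and an interior point of $Z(\T,\Kp)$, which handles your third step). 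Your compactness reduction to a single $q$ does not supply the crucial codimension bound, and your citation of ``Sussmann's construction essentially verbatim'' points to a construction that is organized around codimension growth and Sard, not around a Noetherian descent on $B(u)$.
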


Following a similar reasoning to the proof of this theorem, we are able, {\it mutatis mutandis}, to prove Theorem~\ref{lem:sus_bis}. From the above result, we are looking to add the strong differential observability (immersion property) as well as the introduction of some uniformity in the universality with respect to
the parameters $\mu$ and $\RR$ (lying in a compact set).
Theorem~\ref{thm:Sus_restated} follows from a sequence of intermediary lemmas and and theorems, namely
\cite[Lemma~3.1 to Theorem~3.9]{MR0541865}.
While the 
parameters side 
can be achieved by a refinement on the proof of
\cite[Theorem~3.9]{MR0541865},
the immersion property requires a finer approach. For this, we follow the proof given in  \cite{MR0541865} and propose changes where necessary. We try to be as complete as possible, however, for shortness of presentation, some elements will remain black boxes. The reader is directed towards~\cite{MR0541865} for more details.

Let us introduce some notations of Sussmann's proof. 
Set $\Gamma_0=\R^n\times \R^n$ and $\Gamma_k=\R^n\times \R^n\times (\R^p)^{k}$.
Define $G_k:\Gamma_k\to \R^p$ by $G_k(x_a,x_b,\sigma)=H_k(x_a,\sigma)-H_k(x_b,\sigma)$.
If $j\leq k$, we set $\mu_{kj}:\Gamma_k\to \Gamma_j$ to be the trivial projection. For any nonnegative integer $k$, consider the set
$$
A_k=\left\{(x_a,x_b,\sigma)\in \Gamma_k \mid G_j\circ \mu_{kj}(x_a,x_b,\sigma)=0, \forall j\leq k\right\}.
$$
The set $A_k$ denotes the set of jets of bad inputs, observability-wise, as seen in the following statement (from \cite{MR0541865}). 
\begin{proposition}[Sussmann's bad jets lemma]
\label{P:Sussmann_jets-to-universal-inputs}
Let $k$ be a non-negative integer.
  \begin{enumerate}
      \item $A_k$ is an analytic subset of $\Gamma_k$.
      \item If $j\leq k$, $\mu_{kj}(A_k)\subset A_j$.
       \item $(x_a,x_b,\sigma)\in A_k$ if and only if for any $C^\infty$ input $u$ with $(k-1)$-jet $\sigma$ at $0$,
      $$
      \left.\frac{\dd^{j}}{\dd t^j}\right|_{t=0}h(X(x_a,t,u))
      =
      \left.\frac{\dd^{j}}{\dd t^j}\right|_{t=0}h(X(x_b,t,u)),
      \qquad 
      0\leq j\leq k.
      $$
  \end{enumerate}  
\end{proposition}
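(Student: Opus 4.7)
The plan is to observe that all three claims are essentially bookkeeping once one unpacks the inductive definition of $H_k$ and the iterated projection structure $\mu_{kj}$. The single nontrivial input is the identity $H_j(x,\sigma)=\left.\tfrac{\dd^j}{\dd t^j}\right|_{t=0}h(X(x,t,u))$ for any $C^\infty$ input $u$ whose $(j-1)$-jet at $0$ equals $\sigma$, which is stated in Section~\ref{sec:def-Hq}. I would treat the three points in the order given.

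For (1), I argue by induction on $k$ that $H_k:\R^n\times(\R^p)^k\to\R^m$ is analytic. The base case $H_0=h$ is analytic by hypothesis, and the recursion defining $H_{k+1}$ uses only $f$, $H_k$, and their partial derivatives together with multiplication by $\sigma_{i+1}$, all of which preserve analyticity. Hence $G_j(x_a,x_b,\sigma)=H_j(x_a,\sigma)-H_j(x_b,\sigma)$ is analytic on $\Gamma_j$, and the projections $\mu_{kj}$ are linear and thus analytic. Consequently, the map $\Gamma_k\to\R^{m(k+1)}$ obtained by stacking $G_0\circ\mu_{k0},G_1\circ\mu_{k1},\dots,G_k$ is analytic, and $A_k$ is its zero set, which is therefore an analytic subset of $\Gamma_k$. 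For (2), note that the projections trivially satisfy the semigroup identity $\mu_{ji}\circ\mu_{kj}=\mu_{ki}$ for $i\le j\le k$. If $(x_a,x_b,\sigma)\in A_k$ and $\tau:=\mu_{kj}(x_a,x_b,\sigma)$, then for each $i\le j$,
\[
G_i\circ\mu_{ji}(\tau)=G_i\circ\mu_{ji}\circ\mu_{kj}(x_a,x_b,\sigma)=G_i\circ\mu_{ki}(x_a,x_b,\sigma)=0,
\]
since $i\le k$. Hence $\tau\in A_j$.

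For (3), I pick any $C^\infty$ input $u$ whose $(k-1)$-jet at $0$ is $\sigma$. For every $j\le k$, the $(j-1)$-jet of $u$ at $0$ is precisely the jet component of $\mu_{kj}(x_a,x_b,\sigma)$. Applying the identity recalled above to $x_a$ and $x_b$ separately yields
\[
G_j\circ\mu_{kj}(x_a,x_b,\sigma)=\left.\tfrac{\dd^j}{\dd t^j}\right|_{t=0}h(X(x_a,t,u))-\left.\tfrac{\dd^j}{\dd t^j}\right|_{t=0}h(X(x_b,t,u)),
\]
and I read the equivalence off this equality: simultaneous vanishing for $0\le j\le k$ of the left-hand sides characterizes $(x_a,x_b,\sigma)\in A_k$, and simultaneous vanishing of the right-hand sides is the stated equality of derivatives. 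In particular, because $H_j$ depends only on the $(j-1)$-jet, the statement holds for \emph{any} $C^\infty$ extension $u$ of $\sigma$, so the quantifier on $u$ is immaterial.

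There is no substantial obstacle to overcome: analyticity is built in, (2) is a one-line diagram chase on projections, and (3) is a direct translation via the Sussmann-style formula for $H_k$ that was already justified in the setup of Section~\ref{sec:def-Hq}. The only point requiring a bit of care is to make explicit that the jet $\sigma$ determines the derivatives $\left.\tfrac{\dd^j}{\dd t^j}\right|_{t=0}h(X(x,t,u))$ regardless of how $u$ is prolonged beyond its $(k-1)$-jet, which is exactly what the inductive construction of $H_k$ records.
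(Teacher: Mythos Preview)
Your argument is correct. Note that the paper does not actually supply its own proof of this proposition: it is quoted as a known result from Sussmann~\cite{MR0541865}, so there is nothing to compare against beyond the original reference. Your verification---induction for analyticity of $H_k$, the projection identity $\mu_{ji}\circ\mu_{kj}=\mu_{ki}$ for (2), and the jet identity $H_j(x,\sigma)=\left.\tfrac{\dd^j}{\dd t^j}\right|_{t=0}h(X(x,t,u))$ from Section~\ref{sec:def-Hq} for (3)---is exactly the standard unpacking of the definitions and matches what Sussmann does.
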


To include an immersion condition, we propose to define another family of sets that maintain %
most of the properties of the family $(A_k)$.
We define 
$$
A'_k=\left\{
(x_a,x_b,\sigma)\in \Gamma_k
~\left|~
\begin{aligned}
    &G_j\circ \mu_{kj}(x_a,x_b,\sigma)=0, \forall j\leq k \text{, or}
    \\
    &\rank\left(
    \frac{\partial}{\partial x_i} G_1(x_a,x_b,\sigma),\dots,\frac{\partial}{\partial x_i} G_k(x_a,x_b,\sigma)
    \right)<n, \text{ for } i=a \text{ or }b
\end{aligned}
\right.
\right\}.
$$
The immediate counterpart of Proposition~\ref{P:Sussmann_jets-to-universal-inputs} is as follows.

\begin{proposition}[Our bad jets lemma]
\label{P:Sussmann_jets-to-universal-inputs_2}
Let $k$ be a non-negative integer.
  \begin{enumerate}
      \item $A'_k$ is an analytic subset of $\Gamma_k$.
      \item If $j\leq k$, $\mu_{kj}(A'_k)\subset A'_j$.
      \item $(x_a,x_b,\sigma)\in A_k$ if and only if for any $C^\infty$ input $u$ with $(k-1)$-jet $\sigma$, either there exists $j$, $0\leq j\leq k$, such that
      $$
      \left.\frac{\dd^{j}}{\dd t^j}\right|_{t=0}h(X(x_a,t,u))
      =
      \left.\frac{\dd^{j}}{\dd t^j}\right|_{t=0}h(X(x_b,t,u))
      \qquad 
      0\leq j\leq k
      $$
      or, for either index $i=a$ or $i=b$,
      $$
      \rank\left(\left.\frac{\dd^{j}}{\dd t^j}\right|_{t=0}
      \frac{\partial h}{\partial x}(X(x_i,t,u))\right)_{0\leq j\leq k}<n.
      $$
  \end{enumerate}  
\end{proposition}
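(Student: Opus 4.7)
The plan is to obtain each of the three items by running Sussmann's arguments on $A_k$ in parallel on the extra ``rank'' piece of $A_k'$, and then gluing the two parts together. For bookkeeping, decompose $A_k' = A_k \cup B_k^a \cup B_k^b$, where
\[
B_k^i = \bigl\{(x_a,x_b,\sigma)\in\Gamma_k \mid \rank\bigl(\tfrac{\partial G_1}{\partial x_i},\dots,\tfrac{\partial G_k}{\partial x_i}\bigr)<n\bigr\},\qquad i\in\{a,b\}.
\]

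For item (1), I would argue that each piece is analytic. The set $A_k$ is analytic by Proposition~\ref{P:Sussmann_jets-to-universal-inputs}. The entries of $\partial_{x_i} G_l = \partial_x H_l(x_i,\sigma)$ (up to a sign) are analytic in $(x_a,x_b,\sigma)$ because $f,h$ are. The rank condition $\rank<n$ is the common vanishing of all $n\times n$ minors of the stacked $(km)\times n$ Jacobian, which is a finite family of analytic functions. Hence $B_k^i$ is analytic, and so is the finite union $A_k'$.

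For item (2), I would split cases depending on which piece of $A_k'$ contains $(x_a,x_b,\sigma)$. If it lies in $A_k$, Sussmann's projection property gives $\mu_{kj}(x_a,x_b,\sigma)\in A_j\subset A_j'$. If it lies in $B_k^i$, the key observation is that the matrix defining $B_j^i$ is obtained from the one defining $B_k^i$ by deleting the last $k-j$ blocks of rows; since removing rows cannot increase the rank, the rank at level $j$ is still $<n$, giving $\mu_{kj}(x_a,x_b,\sigma)\in B_j^i\subset A_j'$. Monotonicity of rank under row removal is the entire content of this step.

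For item (3), I would translate the algebraic definition of $A_k'$ into the trajectory-based characterization via the identification already recalled in Section~\ref{sec:def-Hq}: for any $C^\infty$ input $u$ with $(l-1)$-jet $\sigma$ at $0$ one has $H_l(x,\sigma)=\frac{\mathrm d^l}{\mathrm d t^l}\big|_{t=0} h(X(x,t,u))$. This immediately turns ``$G_l\circ\mu_{kl}(x_a,x_b,\sigma)=0$ for all $j\le k$'' into the coincidence of the first $k+1$ time-derivatives of the output along the two trajectories. Because $f$ and $h$ are smooth, $\partial_x$ commutes with $\frac{\mathrm d^l}{\mathrm d t^l}\big|_{t=0}$, so $\partial_{x_i} G_l$ corresponds to $\frac{\mathrm d^l}{\mathrm d t^l}\big|_{t=0}\frac{\partial h}{\partial x}(X(x_i,t,u))$, and the rank condition cutting out $B_k^i$ becomes exactly the rank condition stated in the proposition.

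The whole proof is more bookkeeping than substance: the genuinely new content compared with Proposition~\ref{P:Sussmann_jets-to-universal-inputs} is the rank piece, and each of the three items is designed so that the rank piece can be handled by a one-line invocation of, respectively, analyticity of minors, monotonicity of rank under row deletion, and smoothness-allowed commutation of $\partial_x$ and $\partial_t^l$. The only delicate point I anticipate is being careful that the stacking convention for $(\partial_{x_i} G_1,\dots,\partial_{x_i} G_k)$ is such that passing from level $k$ to level $j<k$ really gives a submatrix (so that the rank inequality goes the right way); this is why items (2) and the analyticity of $B_k^i$ in (1) must be verified with the correct block-row interpretation rather than as a column vector.
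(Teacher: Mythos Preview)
Your proposal is correct and matches the paper's treatment: the paper gives no explicit proof of this proposition, presenting it as ``the immediate counterpart'' of Sussmann's bad jets lemma, and your decomposition $A_k'=A_k\cup B_k^a\cup B_k^b$ together with the three one-line observations (minors are analytic, row deletion does not raise rank, commutation of $\partial_x$ with $\partial_t^l$) is exactly the verification the paper leaves implicit.

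One notational point worth making explicit in your write-up: the expression $\frac{\partial h}{\partial x}(X(x_i,t,u))$ in item~(3) must be read as $\frac{\partial}{\partial x_i}\bigl[h(X(x_i,t,u))\bigr]$, i.e.\ differentiation with respect to the initial condition (so the variational matrix $\partial_{x_i}X$ is included), not as $h'$ composed with $X$. With the literal composition reading, the identification with $\partial_{x_i}G_l$ fails already at $l=1$, since $\partial_x H_1(x,\sigma_0)=h''(x)[f(x,\sigma_0),\cdot]+h'(x)\partial_x f(x,\sigma_0)$ whereas $\partial_t|_{t=0}\,h'(X(x,t,u))=h''(x)f(x,\sigma_0)$. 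The paper uses the same shorthand throughout (see the proof of Lemma~\ref{L:Our_extension}), and with this reading your commutation argument goes through.
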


\begin{remark}\label{rem:rank}
Recall the important rule that is used in subsequent proofs: a family $(W_1,\dots W_k)\in(\R^n)^k$ satisfies $\rank(W_1,\dots, W_k)<n$ if and only if there exists $\xi\in \S^{n-1}$ such that $\langle W_j,\xi\rangle=0$ for all $1\leq j\leq k$. Likewise, $\rank(W_1,\dots, W_k)=n$ if and only if for any $\xi\in \S^{n-1}$, there exists $1\leq j\leq k$ such that $\langle W_j,\xi\rangle\neq 0$.
\end{remark}

Let $\mathcal{E}$ denote a family of subsets $E_k$ of $\Gamma_k$ such that
\begin{enumerate}
    \item $E_k$ is an open subset of $\Gamma_k$
    \item $E_0$ is contained in  the set of distinguishable pairs of points. (When Assumption~\ref{ass:obs0} holds, all pairs are assumed to be distinguishable, so this condition is essentially empty.)
    \item $\mu_{jk}(E_j)\subset E_k$ when $j\geq k$.
    \item $A_k\cap E_k$ is a union of a finite number of connected analytic submanifolds of $\Gamma_k$.
    \item[$4'.$] $A'_k\cap E_k$ is a union of a finite number of connected analytic submanifolds of $\Gamma_k$.
\end{enumerate}

The technical heart of the proof is probably the following result, showing that the amount of space that the sets $A_k$ occupy within $E_k$ becomes smaller as $k$ grows. (Meaning that it is more difficult for a jet of high order to not distinguish between two points). In a later proof, this will allow to invoke  Sard's lemma type arguments to show that  jets having sufficiently high order should be universal (at least over a compact).
Let us recall the notions of dimension and codimension used in \cite{MR0541865}.
If $A$ is a subset of a smooth manifold $E$, its dimension is the maximal dimension of any smooth submanifold of $E$ included in $A$, and is denoted by $\dim A$. The codimension of $A$ is $\codim_E A = \dim E - \dim A$.

\begin{proposition}[Sussmann's growth lemma]
\label{P:growth}
Let $c_{\mathcal{E}}(k)=\mathrm{codim}_{E_k}(A_k\cap E_k)$. Assume that items 1 though 4 hold for $\mathcal{E}$. Then $\lim_{k\to \infty}c_{\mathcal{E}}(k)=+\infty$.
\end{proposition}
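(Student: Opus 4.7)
The strategy is to argue by contradiction: if the codimension remained bounded, then a positive-dimensional analytic stratum in $A_{k_0}$ would lift, under every projection $\mu_{k,k_0}$, to a full-codimension piece of $A_k$, and this stability would force the existence of an indistinguishable pair inside $E_0$, violating the defining condition of $\mathcal{E}$.

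\emph{Step 1 (monotonicity of $c_\mathcal{E}$).} First show that $c_\mathcal{E}(k)$ is non-decreasing in $k$. The projection $\mu_{k+1,k}:\Gamma_{k+1}\to\Gamma_k$ is a trivial submersion with $p$-dimensional linear fibers. Item~2 of Proposition~\ref{P:Sussmann_jets-to-universal-inputs} gives $A_{k+1}\subset\mu_{k+1,k}^{-1}(A_k)$, and item~3 of the definition of $\mathcal{E}$ gives $E_{k+1}\subset\mu_{k+1,k}^{-1}(E_k)$. Hence $A_{k+1}\cap E_{k+1}\subset\mu_{k+1,k}^{-1}(A_k\cap E_k)$, and as the right-hand side has the same codimension $c_\mathcal{E}(k)$ in $E_{k+1}$, we get $c_\mathcal{E}(k+1)\geq c_\mathcal{E}(k)$.

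\emph{Step 2 (persistent top stratum).} Assume for contradiction that $c_\mathcal{E}$ is bounded; then there exist $k_0,c\in\N$ with $c_\mathcal{E}(k)=c$ for all $k\geq k_0$. Use item~4 to pick a connected analytic submanifold $M\subset A_{k_0}\cap E_{k_0}$ of maximal dimension $\dim E_{k_0}-c$. For each $k\geq k_0$, $\mu_{k,k_0}^{-1}(M)\cap E_k$ has dimension $\dim E_k-c$; it lies inside $\mu_{k,k_0}^{-1}(A_{k_0}\cap E_{k_0})$, which is a finite union of the lifts of the (finitely many) components of $A_{k_0}\cap E_{k_0}$. Since $A_k\cap E_k$ has the same maximal codimension and is analytic, a local dimension count combined with the Noetherianity of the descending chain $\mu_{k,k_0}(A_k\cap E_k)\subset A_{k_0}\cap E_{k_0}$ shows that, possibly after replacing $M$ with another top-dimensional component, there is a nonempty (relatively) open $M^\circ\subset M$ such that $\mu_{k,k_0}^{-1}(M^\circ)\cap E_k\subset A_k\cap E_k$ for every $k\geq k_0$.

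\emph{Step 3 (infinite-order indistinguishability at a fixed initial jet).} Fix $(x_a,x_b,\sigma_0,\dots,\sigma_{k_0-1})\in M^\circ$ with $x_a\neq x_b$ (such a point exists since $\mu_{k_0,0}(M^\circ)\subset E_0$ consists of distinguishable pairs, in particular off-diagonal). By Step~2 every extension $(\sigma_{k_0},\dots,\sigma_{k-1})\in(\R^p)^{k-k_0}$ lies in $A_k$, so item~3 of Proposition~\ref{P:Sussmann_jets-to-universal-inputs} gives that for every analytic input $u$ whose $(k_0{-}1)$-jet at $0$ equals $(\sigma_0,\dots,\sigma_{k_0-1})$, every derivative of $h(X(x_a,t,u))-h(X(x_b,t,u))$ vanishes at $t=0$; by analyticity this difference vanishes on the whole interval of existence. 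Varying the free components $\sigma_{k_0},\sigma_{k_0+1},\dots$ shows moreover that each polynomial $G_j(x_a,x_b,\sigma_0,\dots,\sigma_{k_0-1},\cdot)$ vanishes identically on $(\R^p)^{j-k_0}$.

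\emph{Step 4 (upgrade and contradiction).} The final step is to transfer this indistinguishability-for-one-initial-jet to indistinguishability by \emph{every} jet, thereby forcing $(x_a,x_b)$ to be non-distinguishable, contradicting $(x_a,x_b)\in\mu_{k_0,0}(E_{k_0})\subset E_0$. The argument combines: (a) the semigroup property of the flow, applied to the evolved pair $(X(x_a,\delta,u),X(x_b,\delta,u))$ which inherits the same non-distinguishability for any concatenated input; (b) the positive dimension of the fibers of $\mu_{k_0,0}|_{M^\circ}$, which supplies an open family of initial jets that all satisfy Step~3 at $(x_a,x_b)$; (c) analyticity of $(x,\sigma)\mapsto G_j(x,x,\sigma)-G_j(x_a,x_b,\sigma)$ to propagate the vanishing of $G_j$ from this open family to the whole jet space by analytic continuation.

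\emph{Main obstacle.} Step~4 is the delicate point: passing from ``every analytic input with a prescribed initial jet fails to distinguish'' to ``no jet distinguishes $(x_a,x_b)$'' requires exploiting both the flow's semigroup structure and the full analytic-set structure of $A_k$ (via analytic continuation in the jet variables), and it is exactly here that Assumption~\ref{ass:obs0}, encoded in item~2 of $\mathcal{E}$, is invoked to produce the contradiction.
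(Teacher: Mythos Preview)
The paper does not prove this proposition itself; it quotes it from Sussmann \cite{MR0541865}, and the surrounding discussion (in particular the one-line proof of Corollary~\ref{C:growth2}) makes clear that the decisive ingredient in Sussmann's argument is the jet extension lemma, stated here as Lemma~\ref{L:Sus_extension}.

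Your Steps~1--3 are broadly aligned with Sussmann's route. Step~1 is correct. Step~2 is morally right but the justification you give is incomplete: the ``Noetherianity of the descending chain $\mu_{k,k_0}(A_k\cap E_k)$'' is delicate because projections of real analytic sets are only subanalytic in general. A cleaner way to reach the conclusion of Step~2 is to note that if a top-dimensional connected component $N_k$ of $A_k\cap E_k$ has the same dimension as $\mu_{k,k_0}^{-1}(M)\cap E_k$ and sits inside it, then $N_k$ is open there; since the extra defining functions $G_{k_0+1},\dots,G_k$ are analytic on this connected manifold and vanish on the open set $N_k$, they vanish identically, so the \emph{entire} lift $\mu_{k,k_0}^{-1}(M)\cap E_k$ lies in $A_k$. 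A pigeonhole over the finitely many top components $M$ then gives one that works for infinitely many (hence, by the nesting $A_{k+1}\subset\mu_{k+1,k}^{-1}(A_k)$, for all) $k$.

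The real issue is Step~4: it is unnecessary, and your proposed mechanism is not sound. Once Step~3 hands you a distinguishable pair $(x_a,x_b)\in E_0$ together with a $(k_0{-}1)$-jet $\sigma$ \emph{all} of whose extensions lie in $A_k$ for every $k\geq k_0$, you already contradict Lemma~\ref{L:Sus_extension}, which asserts that any jet at a distinguishable pair admits an extension escaping some $A_j$. There is nothing to ``upgrade''. Your Step~4 plan also contains unjustified claims: nothing forces $\mu_{k_0,0}|_{M^\circ}$ to have positive-dimensional fibers (the top stratum $M^\circ$ could project to a single pair), and the analytic-continuation step~(c) would need $G_j$ to vanish on an open set of \emph{initial} jets, which you have not obtained. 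Replace Step~4 by a direct appeal to Lemma~\ref{L:Sus_extension} and the proof closes.
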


This result does not hold, as is, for the family $(A'_k)$, but we can provide new arguments to extend the statement to include it. More precisely, we need an equivalent of the following property.

\begin{lemma}[{\cite[Lemma 3.5]{MR0541865}}]
\label{L:Sus_extension}
Let $(x_a,x_b)$ be a distinguishable pair,
let $k$ be a nonnegative integer, and let $\sigma\in  (\R^p)^{k+1}$ be a $k$-jet. Then there exists $j\geq k$ and a $j$-jet $\tau\in (\R^p)^{j+1}$ such that $\sigma$ is the projection of the $j$-jet $\tau$ onto the space of $k$-jets, and $\tau$ distinguishes $(x_a, x_b)$.
\end{lemma}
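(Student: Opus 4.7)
If $\sigma$ itself distinguishes $(x_a,x_b)$, the statement is trivial with $j=k$ and $\tau=\sigma$. So the substantive case is when $\sigma$ does not distinguish, and I would argue by contradiction: suppose no $j$-jet $\tau$ with $\mu_{jk}(\tau)=\sigma$ distinguishes $(x_a,x_b)$ for any $j\geq k$, and aim for a contradiction with the assumed distinguishability of $(x_a,x_b)$.

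By Proposition~\ref{P:Sussmann_jets-to-universal-inputs}(3) applied at every order, this contradiction hypothesis is equivalent to the statement that, for every $C^\infty$ input $u$ defined on a neighborhood of $0$ whose jet at $0$ extends $\sigma$ to all orders, every time-derivative at $t=0$ of
\[
\delta_u(t) \defeq h(X(x_a,t,u)) - h(X(x_b,t,u))
\]
vanishes. Specializing to analytic $u$---for instance, the Taylor polynomial $t\mapsto\sum_{i=0}^{k}\sigma_i t^i/i!$, or any real-analytic extension of $\sigma$---the analyticity of $f$ and $h$ makes $\delta_u$ real-analytic on the common maximal interval of existence of $X(x_a,\cdot,u)$ and $X(x_b,\cdot,u)$. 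A real-analytic function whose full Taylor expansion at the endpoint $t=0$ is zero must be identically zero, so $\delta_u\equiv 0$ on that interval.

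The hard part is turning this into a contradiction with distinguishability. Distinguishability provides an input $w$, which one may take analytic, and a time $t_w>0$ with $\delta_w(t_w)\neq 0$. The plan is to construct a family of analytic inputs $(u_s)_{s>0}$, each having the same $k$-jet $\sigma$ at $0$, that approximate $w$ uniformly on the compact interval $[t_w/2,t_w]$ as $s\to 0$. Combining the identity $\delta_{u_s}\equiv 0$ obtained above with continuous dependence of $X(x_\bullet,t,\cdot)$ on the input (in the compact-open topology), passage to the limit yields $\delta_w(t_w)=0$, contradicting the choice of $w$. The key obstacle is the rigidity of real-analytic functions: prescribing a Taylor expansion at $0$ severely constrains behavior elsewhere, so the approximants cannot be obtained by simple gluing. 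I would produce them via a Runge-type density argument for entire functions on a real interval, correcting a finite number of low-order Taylor coefficients at $0$ to match $\sigma$ by addition of an entire term whose sup-norm on $[t_w/2,t_w]$ can be made arbitrarily small. Making this approximation precise while preserving the exact jet at $0$ is the technical core of the argument.
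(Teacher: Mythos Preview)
Your contradiction strategy has a genuine gap in the continuous-dependence step. You propose analytic $u_s$ with $k$-jet $\sigma$ at $0$ that approximate $w$ \emph{uniformly on $[t_w/2,t_w]$}, and then invoke continuous dependence of $X(x_\bullet,t,\cdot)$ on the input. But the trajectory $X(x_a,t_w,u_s)$ depends on $u_s$ over the \emph{entire} interval $[0,t_w]$, not just its tail. Your jet-correction term has a prescribed nonzero value at $t=0$ (namely $\sigma_0$ minus the $0$-th coefficient of your Runge approximant, generically $\sigma_0-w(0)\neq0$), so its sup-norm on $[0,t_w/2]$ cannot be made small. Hence $u_s$ need not be close to $w$ on $[0,t_w/2]$, the trajectories $X(x_a,\cdot,u_s)$ and $X(x_a,\cdot,w)$ may already separate before time $t_w/2$, and $\delta_{u_s}(t_w)\to\delta_w(t_w)$ does not follow. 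To salvage the idea you would need convergence in $L^1([0,t_w])$ together with a uniform $L^\infty$ bound, and a construction of the correction term achieving this; the Runge-plus-correction scheme you sketch does not obviously deliver that.

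Sussmann's proof (which the paper mirrors in the sister Lemma~\ref{L:Our_extension}) sidesteps this difficulty by \emph{concatenation rather than approximation of $w$}. One first realizes $\sigma$ by a smooth input $u$, flows for a short time $T$ to reach $(x_a',x_b')$, observes that this pair is still distinguishable, and appends an input that distinguishes it. The concatenated input $v$ is only $C^k$, but has the correct $k$-jet $\sigma$ at $0$ \emph{by construction}. One then writes $v(t)=P(t)+t^{k+1}Q(t)$ with $P$ the Taylor polynomial of $\sigma$, and replaces $Q$ by a Weierstrass polynomial approximant $Q_\ell$; the resulting $v_\ell$ is polynomial (hence analytic), retains the jet $\sigma$ at $0$, and for large $\ell$ still distinguishes at some positive time. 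Analyticity of $t\mapsto\delta_{v_\ell}(t)$ then forces a nonzero Taylor coefficient at $0$, yielding the extended jet $\tau$. The factor $t^{k+1}$ is precisely what makes the jet constraint compatible with approximation, and is the idea your sketch is missing.
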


An equivalent formulation is that if $(x_a,x_b,\sigma)\in A_k$, there always exist $j>k$, $\tau$ such that $(x_a,x_b,\tau)\notin A_j$ and $\mu_{jk}(x_a,x_b,\tau)=(x_a,x_b,\sigma)$. Using the same method of proof as Lemma~\ref{L:Sus_extension}, we are able to achieve the following.

\begin{lemma}[Jet extension lemma]
\label{L:Our_extension}
Let Assumption~\ref{ass:immersion0} hold.
Let $(x_a,x_b)$ be a distinguishable pair,
let $k$ be a nonnegative integer, and let $\sigma\in (\R^p)^{k}$ be a $(k-1)$-jet. Either $(x_a,x_b,\sigma)\notin A'_k$ or there exist $j>k$ and $\tau$ such that $(x_a,x_b,\tau)\notin A'_j$ and $\mu_{jk}(x_a,x_b,\tau)=(x_a,x_b,\sigma)$.
\end{lemma}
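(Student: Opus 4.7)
The strategy is to adapt Sussmann's proof of Lemma~\ref{L:Sus_extension} (his original jet-extension lemma) by controlling, in parallel, the rank component built into the definition of $A'_k$. If $(x_a,x_b,\sigma)\notin A'_k$ the statement holds vacuously, so assume $(x_a,x_b,\sigma) \in A'_k$. Applying Lemma~\ref{L:Sus_extension} to the distinguishable pair $(x_a,x_b)$ produces $j_1>k$ and an extension $\tau_1\in(\R^p)^{j_1}$ of $\sigma$ with $(x_a,x_b,\tau_1)\notin A_{j_1}$, i.e.\ at least one $G_i(x_a,x_b,\tau_1)$ is nonzero. This handles the ``all $G_i$ vanish'' component of $A'_{j_1}$, but it does not yet guarantee the rank (immersion) component.

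For every $j\geq j_1$, consider the affine space $S_j:=\{\sigma\}\times(\R^p)^{j-k}$ of all extensions of $\sigma$ to order $j$, and define three analytic subsets of $S_j$:
\[
B^0_j=\{\tau\in S_j\mid (x_a,x_b,\tau)\in A_j\},\qquad
B^a_j=\{\tau\in S_j\mid \rank(\partial_{x_a}G_1,\dots,\partial_{x_a}G_j)<n\},
\]
and $B^b_j$ defined analogously with $x_b$ in place of $x_a$. Since $S_j$ is a real affine space, hence irreducible, the union $B^0_j\cup B^a_j\cup B^b_j$ is a proper subset of $S_j$ as soon as each factor is; any $\tau$ in the complement then yields $(x_a,x_b,\tau)\notin A'_j$ with $\mu_{jk}(x_a,x_b,\tau)=(x_a,x_b,\sigma)$, which is the desired conclusion. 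The properness of $B^0_{j_1}$ is given by $\tau_1$.

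To control $B^a_j$ (and symmetrically $B^b_j$) we invoke Assumption~\ref{ass:immersion0}: there exist $k_{x_a},k_{x_b}\in\N$ for which $\frac{\partial \HH_{k_{x_a}}}{\partial x}(x_a,0,0)$ and $\frac{\partial \HH_{k_{x_b}}}{\partial x}(x_b,0,0)$ are injective, so that some $n\times n$ minors $M^a$ and $M^b$ of these Jacobians are nonzero at the respective zero jets. The main obstacle is that $M^a$ and $M^b$ may well vanish on the slice $S_j$, since the zero jet need not belong to $S_j$ when $\sigma\neq 0$. I would resolve this by going to higher order $j$ and exploiting the recursion $H_{i+1}(x,\sigma_0,\dots,\sigma_i)=\partial_x H_i\cdot f(x,\sigma_0)+\sum_{\ell=0}^{i-1}\partial_{\sigma_\ell}H_i\cdot\sigma_{\ell+1}$: for $i>k$ the new jet variable $\sigma_i$ enters $H_{i+1}$ linearly with coefficient $\partial_{\sigma_{i-1}}H_i$, so minors of $\partial_x\HH_j(x_a,\cdot)$ restricted to $S_j$ are genuine polynomials in the free variables $(\tau_k,\dots,\tau_{j-1})$. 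An analytic continuation / polynomial non-vanishing argument, combined with Assumption~\ref{ass:immersion0} which forbids \emph{global} rank deficiency, then shows that for $j$ large enough some minor of $\partial_x\HH_j(x_a,\cdot)$ is not identically zero on $S_j$, proving that $B^a_j\subsetneq S_j$. The same argument applies to $B^b_j$.

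Finally, choose $j$ large enough that $j\geq j_1$, $j\geq k_{x_a}$, $j\geq k_{x_b}$, and such that the nonvanishing minors above are witnessed on $S_j$. Then $B^0_j$, $B^a_j$, $B^b_j$ are each proper analytic subsets of the irreducible affine space $S_j$, so their union is a proper analytic subset; picking $\tau\in S_j\setminus(B^0_j\cup B^a_j\cup B^b_j)$ produces the required $\tau\in(\R^p)^j$ extending $\sigma$ with $(x_a,x_b,\tau)\notin A'_j$, completing the proof.
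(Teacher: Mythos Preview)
Your overall architecture is reasonable and close in spirit to the paper's, but the heart of the argument---showing that $B^a_j$ (and $B^b_j$) is a \emph{proper} analytic subset of the slice $S_j$---is not established. You correctly identify the obstacle: Assumption~\ref{ass:immersion0} guarantees a nonvanishing minor only at the zero jet, which need not lie in $S_j$ when $\sigma\neq 0$. Your proposed resolution, however, does not bridge this gap. Observing that the minors are polynomials in the free variables $(\tau_k,\dots,\tau_{j-1})$ is fine, but ``an analytic continuation / polynomial non-vanishing argument'' is not an argument: a polynomial that is nonzero somewhere in $(\R^p)^j$ can perfectly well vanish identically on an affine slice $\{\sigma\}\times(\R^p)^{j-k}$. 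Nothing in the recursion you wrote down prevents this, and Assumption~\ref{ass:immersion0} only speaks about one particular point of $(\R^p)^j$ outside the slice. So as written, the proof does not show $B^a_j\subsetneq S_j$, and the final step (taking $\tau$ in the complement of the union) is unsupported.

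The paper closes exactly this gap, and its mechanism is worth noting because it is the main new idea beyond Sussmann. Rather than reasoning purely in jet space, the paper produces a \emph{concrete} analytic input with $(k-1)$-jet $\sigma$ at $t=0$ whose high-order jet lies in $S_j\setminus B^a_j$. The construction is: realize $\sigma$ by a smooth input $u$, concatenate it (via a $C^{k_0}$ junction) with the null input on a later interval, and approximate the result in $C^{k_0}$ by polynomials $v_\ell$ that retain the jet $\sigma$ at $0$. On the tail interval $v_\ell$ is $C^{k_0}$-close to $0$, so Assumption~\ref{ass:immersion0} gives the immersion property of $\HH_{k_0}(\cdot,t,v_\ell)$ for $t$ in that interval. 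Since $v_\ell$ is analytic, the map $t\mapsto \langle \partial_x h(X(x_i,t,v_\ell)),\xi\rangle$ is analytic and not identically zero, hence has a finite-order nonzero derivative at $t=0$; compactness of $\S^{n-1}$ then yields a uniform order $j$. The $(j-1)$-jet of $v_\ell$ at $0$ is the required $\tau\in S_j\setminus B^a_j$. This is precisely the missing ingredient in your sketch; once you have it, either your union-of-proper-analytic-sets framework or the paper's sequential reduction (first exit $A_k$ via Sussmann, then extend further to fix the rank) completes the proof.
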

\begin{proof}
Let $(x_a,x_b,\sigma)\in \Gamma_k$. If $(x_a,x_b,\sigma)\notin A'_k$, there is nothing to prove. If $(x_a,x_b,\sigma)\in A_k$, we can apply Lemma~\ref{L:Sus_extension}, and find $j>k$, $\tau$ such that $\mu_{jk}(\tau)=\sigma$ and $(x_a,x_b,\tau)\notin A_j$. Hence without loss of generality we assume that $(x_a,x_b,\sigma)\in A'_k\setminus A_k$. Since $\mu_{jk}(A_j)\subset A_k$, we know that any extension $\tau$ of $\sigma$ will satisfy $(x_a,x_b,\tau)\notin A_j$. Hence we can focus on showing that for any $(x_a,x_b,\sigma)\in A'_k\setminus A_k$ there exists an extension $\tau$ of $\sigma$ such that $(x_a,x_b,\tau)\notin A'_j$ by proving that the rank condition is satisfied
(following Remark~\ref{rem:rank}).

Let $u\in C^\infty([0,\T],\R^p)$ be any realization of the jet $\sigma$.
Let $T\in(0, \T)$ be such that the Cauchy problems \eqref{syst}
with initial conditions $x_a$ and $x_b$ admits solutions $X(x_a, \cdot,u)$ and $X(x_b, \cdot,u)$, respectively, over $[0, T]$.
Set $x_a'=X(x_a, T,u)$ and $x_b'=X(x_b, T,u)$.
Let $r$ be a positive constant and set $\bar v = 2|u(T)|$.
For any $v\in C^0\big(\R_+, B_{\R^p}(0, \bar v)\big)$,
set $T_e(v, x_i')=\inf\{t>0\mid |X(t, x_i',v)-x'_i|>r\}$ for $i=a$ and $i=b$.
If $T_e(v, x_i')<\infty$, then,
\[
\begin{aligned}
r
=
|X(T_e(v, x_i'), x_i',v)-x_i'|&\leq \int_0^{T_e(v, x_i')} |f(X(s, x_i', v),v(s))|\dd s
\\
&\leq T_e(v, x_i')
\sup_{B_{\R^n}(x'_i, r) \times B_{\R^p}(0, \bar v)} |f(x,u)|,
\end{aligned}
\]
and thus,
\(
T_e(v,x_i')
\ge
\eta \defeq
r/\max_{i\in\{1, 2\}}\sup_{B_{\R^n}(x_i', r)\times B_{\R^p}(0, \bar v)}|f(x,u)|.
\)
By construction, for any input $v\in C^0\big(\R_+, B_{\R^p}(0, \bar v)\big)$,
any solution to the Cauchy problems \eqref{syst}
with initial conditions  $x_a'$ and $x_b'$ admits solutions $X(x_a', \cdot,v)$ and $X(x_b', \cdot,v)$, respectively, over $[0, \eta]$.
Furthermore, for all $t\in [0,\eta]$,  $i=a,b$, $|X(x_i', t,v)-x_i'|{\le} r$.

Set $T'>0$ to be a positive time such that all solutions to $\dot{x}=f(x,0)$ with initial condition in $B_{\R^n}(x'_1, r)\cup B_{\R^n}(x'_2, r)$ exist over $[0,T']$.
Let now $K$ be a compact set that contains in its interior $X(x_i, t,u)$, $i=a,b$, $t\in[0,T]$, as well as $X(B_{\R^n}(x'_i, r),t,0)$, $i=a,b$, $t\in[0,T'']$.
By Assumption~\ref{ass:immersion0},
since $K$ is compact, there exists $k_0$ such that $\mathcal{H}_{k_0}(\cdot,0,0)$ is an immersion at $x$ for any $x\in K$. Indeed one such $k_0(x)$ exists for any $x\in K$ by assumption.
This property being open, we get a uniform $k_0$ over $K$ by 
a finite open cover argument.

Let $v_\eta\in C^{k_0}([0,\eta],\R^p)$ be such that the $k_0$-jet at 0 is given by the $k_0$-jet at $T$ of $u$, and the $k_0$-jet at $\eta$ is $0$. 
We also require that  $\|v_\eta\|_{\infty}\leq \bar v= 2|u(T)|$.
Now we consider $v\in C^{k_0}([0,T+T'+\eta],\R^p)$, a $k_0$-continuously differentiable extension of the input $u$:
$$
v(t)=
\begin{cases}
    u(t)        & \text{if } t\in [0,T)
    \\
    v_\eta(t-T')  & \text{if } t\in [T,T+\eta)
    \\
    0           & \text{if } t\in [T+\eta,T+T'+\eta]
\end{cases}
$$
By construction, $v$ is admissible,  $X(x_a, t,v)$ and $X(x_b, t,v)$ are well defined  for all $t\in[0, T+T'+\eta]$ and belong to $\mathring{K}$.

We then write $v(t)=P(t)+t^{k_0+1}Q(t)$, where $P(t)$ is the Taylor polynomial of degree $k_0$ of $v$ at $0$. In fact, $P(t)$ is the polynomial realization of the $k_0$-jet $\sigma$. Using Weierstrass approximation theorem of order $k_0$, we can find a sequence of polynomials $Q_\ell$ such that for all $0\leq m\leq k_0$, $Q_\ell^{(m)}$ converges uniformly towards $Q^{(m)}$ as $\ell\to \infty$. As such, $v_\ell=P(t)+t^{k_0+1}Q_\ell(t)$ is a sequence of polynomials with $v_\ell^{(m)}$ converging uniformly towards $v^{(m)}$ ($m\leq k_0$) while $v_\ell$ maintains the $k$-jet $\sigma$ at $0$.

Because being an immersion is an open property,
there exists $\delta>0$ such that if an input $w\in C^{k_0-1}([0,T],\R^p)$ satisfies $\|w^{(m)}\|_{\infty}\leq \delta$  for all $0\leq m\leq k_0-1 $, then $x\mapsto \mathcal{H}_{k_0}(x,t,w)$ is an immersion at all $x\in K$, for all $t\in [0,T]$. Since $v_\ell^{(m)}\to 0$ on $[T'+\eta,T'+T''+\eta]$, we have that for $\ell$ large enough, $x\mapsto \mathcal{H}_{k_0}(x,t,v_\ell)$ is an immersion at all $x\in K$ and all $t\in [T+\eta,T+T'+\eta]$. For $\ell$ large enough we also have that 
$X(x_i,t,v_\ell)\in \mathring{K}$, for all $t\in[0,T+T'+\eta]$, $i=a,b$. This allows to conclude that for all $\xi\in\S^{n-1}$, and any $i=a,b$, there exists $0\leq l\leq k_0$
$$
\lang
\left.\frac{\dd^{l}}{\dd t^l}\right|_{t=T+\eta}
\frac{\partial h}{\partial x}(X(x_i,t,v_\ell)), \xi
\rang
\neq0.
$$
Now let us deduce what happens at $t=0$.
For any $(\xi, i)\in\S^{n-1}\times\{1, 2\}$,
the map
$t\mapsto \lang \frac{\partial h}{\partial x}(X(x_i,t,v_\ell)), \xi \rang$
is analytic (recall $v_\ell$ is actually a polynomial, approximating a non-analytic function) and non zero since it is non-zero on $[T+\eta,T+T'+\eta)$. 
Hence for any $i=a,b$, and any $\xi\in\S^{n-1}$, there exists $l_i(\xi)\geq0$ such that 
$$
\lang
\left.\frac{\dd^{l_i(\xi)}}{\dd t^{l_i(\xi)}}\right|_{t=0}
 \frac{\partial h}{\partial x}(X(x_i,t,v_\ell)),\xi
\rang
\neq 0.
$$
By continuity of the scalar product,
we again get an open property, and are able to use compactness of $\S^{n-1}\times\{1, 2\}$ to conclude that there exists $j\geq 0$ such that $l_i(\xi)\leq j$ for all $(\xi, i) \in \S^{n-1}\times\{1, 2\}$.
In conclusion, for $\ell$ large enough, the $j$-jet $\tau$ of $v_\ell$ satisfies the required condition: $\mu_{jk}(x_a,x_b,\tau)=(x_a,x_b,\sigma)$ and $(x_a,x_b,\tau)\notin A'_j$.
\end{proof}

These elements are sufficient to add the strong differential observability to Sussmann's result, that is, differential observability and the immersion property of the map of derivatives of the output. We recover the same result regarding the codimension of the spaces $(A'_k)$.

\begin{corollary}[Our growth lemma]
\label{C:growth2}
Let $c'_{\mathcal{E}}(k)=\mathrm{codim}_{E_k}(A'_k\cap E_k)$. Assume that items 1 through $4'$ hold for $\mathcal{E}$. Under assumption~\ref{ass:immersion0}, $\lim_{k\to \infty}c'_{\mathcal{E}}(k)=+\infty$.
\end{corollary}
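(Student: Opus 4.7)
The plan is to mimic Sussmann's proof of Proposition~\ref{P:growth}, substituting $A'_k$ for $A_k$, item $4'$ for item $4$, and Lemma~\ref{L:Our_extension} for Lemma~\ref{L:Sus_extension}. The underlying mechanism is unchanged: combine a jet extension property with a dimension count across the fibers of the projections $\mu_{(k+1)k}:\Gamma_{k+1}\to\Gamma_k$, each of which is diffeomorphic to $\R^p$.

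First I would verify that the sequence $(c'_{\mathcal{E}}(k))_{k\geq 0}$ is non-decreasing. Since $\mu_{(k+1)k}$ has $\R^p$-fibers, one has $\dim \mu_{(k+1)k}^{-1}(A'_k\cap E_k)=\dim(A'_k\cap E_k)+p$ while $\dim E_{k+1}=\dim E_k+p$. Item~3 of the family $\mathcal{E}$ combined with item~2 of Proposition~\ref{P:Sussmann_jets-to-universal-inputs_2} gives the inclusion $A'_{k+1}\cap E_{k+1}\subset \mu_{(k+1)k}^{-1}(A'_k\cap E_k)$, whence $c'_{\mathcal{E}}(k+1)\geq c'_{\mathcal{E}}(k)$.

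I would then argue by contradiction. Suppose the sequence stabilises at some value $c^\star<+\infty$ from an index $k_0$ onward. Item~$4'$ furnishes an analytic submanifold $M_0\subset A'_{k_0}\cap E_{k_0}$ of codimension exactly $c^\star$. Iterating the projection/preimage game along $\mu_{(k_0+j+1)(k_0+j)}$, the constancy of $c'_{\mathcal{E}}$ forces $A'_{k_0+j+1}\cap \mu_{(k_0+j+1)(k_0+j)}^{-1}(M_0)$ to have the same dimension as $\mu_{(k_0+j+1)(k_0+j)}^{-1}(M_0)$. An analytic Sard/Fubini-type argument applied to the submanifold decomposition of item~$4'$ then produces a single point $(x_a,x_b,\sigma)\in M_0$ for which \emph{every} extension $\tau\in(\R^p)^{j}$ of $\sigma$ satisfies $(x_a,x_b,\tau)\in A'_{k_0+j}$ for all $j\geq 1$. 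Since $(x_a,x_b)\in E_0$ is a distinguishable pair (using item~2 of $\mathcal{E}$, which is ensured by Assumption~\ref{ass:obs0}), this directly contradicts Lemma~\ref{L:Our_extension}, which guarantees the existence of some extension escaping $A'_{k_0+j}$.

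The step I expect to be the main obstacle is the Sard/Fubini transfer from constancy of codimension to the fiberwise filling property; this is already the technical heart of Sussmann's original argument for $A_k$. The transfer carries over to $A'_k$ essentially verbatim, precisely because item~$4'$ grants the same analytic submanifold decomposition used by Sussmann. Thus the only genuinely new ingredient compared to \cite{MR0541865} is our jet extension Lemma~\ref{L:Our_extension}, whose rank-oriented half exploits Assumption~\ref{ass:immersion0}; everything else in Sussmann's proof, including the stratification arguments and the dimension bookkeeping, applies without change to yield $c'_{\mathcal{E}}(k)\to+\infty$.
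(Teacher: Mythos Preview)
Your proposal is correct and takes exactly the same approach as the paper: the paper's proof simply states that Sussmann's argument for Proposition~\ref{P:growth} carries over verbatim once one substitutes Proposition~\ref{P:Sussmann_jets-to-universal-inputs_2} for Proposition~\ref{P:Sussmann_jets-to-universal-inputs} and Lemma~\ref{L:Our_extension} for Lemma~\ref{L:Sus_extension}. In fact you provide more detail than the paper does, since you actually sketch the monotonicity and stabilisation-contradiction mechanism that the paper leaves as a reference to~\cite{MR0541865}; one small nit is that distinguishability of $(x_a,x_b)$ follows directly from item~2 of $\mathcal{E}$ (which is part of the hypotheses), not from Assumption~\ref{ass:obs0}.
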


\begin{proof}
The proof of this statement is exactly the same as the proof of Proposition~\ref{P:growth}. We just need to substitute Proposition~\ref{P:Sussmann_jets-to-universal-inputs} (\cite[Lemmas 3.7, 3.6 (ii)]{MR0541865}) with Proposition~\ref{P:Sussmann_jets-to-universal-inputs_2}, and Lemma~\ref{L:Sus_extension} (\cite[Lemma 3.5]{MR0541865}) with Lemma~\ref{L:Our_extension}.
\end{proof}

Now we propose a refinement of Sussmann's proof that includes the contraction coefficient $\mu$ and the action of the special orthogonal group $\O(p)$ defined by:
$\RR\cdot \sigma=(\RR\sigma_1,\dots, \RR\sigma_k)$
for any $(\RR, \sigma)\in \O(p)\times (\R^p)^k$ and any $k\geq 1$.

\begin{proposition}
\label{P:jets}
Let Assumptions \ref{ass:obs0} and \ref{ass:immersion0} hold. 
Let $\Kp$ be a compact subset of $\R^n\times \R^n\setminus \diag$.
Let $(B_\ell)_{\ell\geq 0}$ be a sequence of open balls of positive radii and centered at $0 \in\R^p$.
Then there exists $k>0$ and an open and dense subset $W$ of $B_0 \times B_1\times \cdots\times B_{k-1}$ such that,
whenever $(x_a, x_b, \sigma, \mu, \RR)\in \Kp \times W \times [0, 1] \times \O(p)$,
then $(x_a,x_b,\mu \RR\cdot\sigma)\notin  A'_k$.
\end{proposition}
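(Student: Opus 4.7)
The plan is to combine the codimension growth of Corollary~\ref{C:growth2} with a Sard-type projection argument over the parameters $(x_a,x_b,\mu,\RR)\in\Kp\times[0,1]\times\O(p)$, treating the degenerate value $\mu=0$ separately by invoking strong differential observability at the null input (Lemma~\ref{lem:hyp12}). Since $\Kp$ is compact and disjoint from $\diag$, first pick a relatively compact open neighborhood $E_0$ of $\Kp$ inside $(\R^n\times\R^n)\setminus\diag$, and set $E_k=E_0\times(\R^p)^k$. After verifying that the family $\mathcal{E}=(E_k)_k$ satisfies items 1--3 and~$4'$, Corollary~\ref{C:growth2} yields $c'_{\mathcal{E}}(k)\defeq\codim_{E_k}(A'_k\cap E_k)\to+\infty$. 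Moreover, by Lemma~\ref{lem:hyp12} there exists $k_0$ such that the null input makes \eqref{syst} strongly differentially observable over $\overline{\uKp}$ at order $k_0$, which is exactly the statement that $(x_a,x_b,0)\notin A'_k$ for every $k\geq k_0$ and every $(x_a,x_b)\in\Kp$; this takes care of the case $\mu=0$, in which $\mu\RR\cdot\sigma=0$ regardless of $\RR$ and $\sigma$.

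For the main case $\mu>0$, introduce the smooth map
\[
\Psi\colon E_0\times B_0\times\cdots\times B_{k-1}\times(0,1]\times\O(p)\to E_k,\qquad \Psi(x_a,x_b,\sigma,\mu,\RR)=(x_a,x_b,\mu\RR\cdot\sigma),
\]
whose partial differential in $\sigma$ equals the componentwise multiplication by $\mu\RR$, which is invertible on $(0,1]\times\O(p)$, so that $\Psi$ is a submersion. By item~$4'$, $A'_k\cap E_k$ is a finite union of connected analytic submanifolds of $E_k$; hence $\Psi^{-1}(A'_k\cap E_k)$ is a finite union of analytic submanifolds of the open manifold $E_0\times B_0\times\cdots\times B_{k-1}\times(0,1]\times\O(p)$, of dimension at most $\dim E_k+1+\dim\O(p)-c'_{\mathcal{E}}(k)=2n+kp+1+\tfrac{p(p-1)}{2}-c'_{\mathcal{E}}(k)$. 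Its image under the projection onto $B_0\times\cdots\times B_{k-1}$ is a subanalytic set of at most this dimension; by Corollary~\ref{C:growth2}, one may choose $k\geq k_0$ large enough so that $c'_{\mathcal{E}}(k)>2n+1+\tfrac{p(p-1)}{2}$, in which case this dimension is strictly less than $kp=\dim(B_0\times\cdots\times B_{k-1})$, so the projection is nowhere dense.

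Combining both cases, the bad set $S\subset B_0\times\cdots\times B_{k-1}$, defined as the set of $\sigma$ for which some $(x_a,x_b,\mu,\RR)\in\Kp\times[0,1]\times\O(p)$ satisfies $(x_a,x_b,\mu\RR\cdot\sigma)\in A'_k$, is nowhere dense. For the openness of $W\defeq(B_0\times\cdots\times B_{k-1})\setminus S$, observe that $S$ is the image under the projection onto the $\sigma$-factor of the closed subset $\{(x_a,x_b,\sigma,\mu,\RR)\in\Kp\times(B_0\times\cdots\times B_{k-1})\times[0,1]\times\O(p):(x_a,x_b,\mu\RR\cdot\sigma)\in A'_k\}$; since $\Kp\times[0,1]\times\O(p)$ is compact, this projection is a closed map, so $S$ is closed and $W$ is open, hence open and dense. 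The main obstacle is the subanalytic dimension control for the projection of $\Psi^{-1}(A'_k\cap E_k)$ onto the $\sigma$-coordinates, which rests on the finite analytic stratification guaranteed by item~$4'$ together with the submersivity of $\Psi$; the latter fails precisely at $\mu=0$, which is why this value is peeled off at the outset and handled by Lemma~\ref{lem:hyp12}.
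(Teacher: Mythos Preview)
Your argument is essentially correct and arguably more direct than the paper's, modulo two points that need tightening. First, with $E_k=E_0\times(\R^p)^k$ unbounded, item~$4'$ (finite analytic stratification of $A'_k\cap E_k$) is not evident, and Corollary~\ref{C:growth2} is stated under that hypothesis; take instead $E_k=E_0\times B_0\times\cdots\times B_{k-1}$, which is relatively compact and still contains the image of $\Psi$ because the $B_\ell$ are centered at the origin and $|\mu\RR\sigma_\ell|\le|\sigma_\ell|$. Second, the projection onto the $\sigma$-factor is not proper over $E_0\times(0,1]\times\O(p)$, so the image is not automatically subanalytic. But you do not actually need subanalyticity: you separately prove $S$ closed via compactness of $\Kp\times[0,1]\times\O(p)$, and the image of a finite union of manifolds of dimension $<kp$ under a smooth map has Lebesgue measure zero, hence $S$ is closed with empty interior, which suffices.

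The paper's route differs in two respects. It first localizes $E_0$ to a ball around a single pair $(x_a,x_b)$ and then covers $\Kp$ by finitely many such neighborhoods, whereas you work with one global neighborhood of $\Kp$. More substantively, instead of pulling $A'_k\cap E_k$ back through the submersion $\Psi$ and projecting onto $\sigma$, the paper pushes $\overline{A'_k\cap E_k}\times[0,1]\times\O(p)$ \emph{forward} via $\varsigma:(x_a,x_b,\sigma,\mu,\RR)\mapsto\mu\RR\cdot\sigma$ into $\R^{pk}$, obtaining a compact semianalytic set $L$ of positive codimension; it must then supplement this with a normalization argument (using $\nu(z)=z/|z|$ and the $\O(p)$-invariance of $L$) together with a small ball $B_{\R^{pk}}(0,\varepsilon_k)$ around the null jet to conclude that $\sigma\in W$ forces $(x'_a,x'_b,\mu\RR\cdot\sigma)\notin A'_k$. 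Your upfront handling of $\mu=0$ via Lemma~\ref{lem:hyp12} plays exactly the role of that $\varepsilon_k$-ball, and your submersion--projection eliminates the normalization trick entirely. The trade-off is that the paper's forward image is automatically compact, giving closedness and subanalyticity for free, while you pay for this with the separate compactness argument on~$S$.
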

\begin{proof}
We follow the reasoning of Sussmann's Theorem 3.9.  The first step is to restrict the search by stipulating that the result only needs to hold true for some neighborhood in $\R^n\times \R^n\setminus \diag$ of any pair $(x_a,x_b)$  such that $x_a\neq x_b$. Then the statement holds on any compact subset $\Kp$.

On the one hand, by Assumptions~\ref{ass:obs0} and \ref{ass:immersion0},
there exists $k'\geq 0$ such that,  $(x_a,x_b,0,\dots,0)\notin A_{k}'$ for all $k\ge k'$.
Because $A_{k}'$ is closed, it follows that for any $k> k'$, there exists $\varepsilon_k>0$ such that,
$A_k'\cap
\big[E_0\times B_{\R^{pk}}(0,\varepsilon_k) \big]
=\emptyset$ and
$\overline{E}_0
\subset \R^n\times \R^n\setminus \diag$
taking $E_0= B_{\R^{n}}(x_a,\varepsilon_k)\times B_{\R^{n}}(x_b,\varepsilon_k)$.

For $k\geq 1$, set $E_k=E_0\times B_0\times \cdots\times B_{k-1} $.
Then the induced family $\mathcal{E}= (E_\ell)_{\ell\ge 0}$ satisfies conditions 1 to 3 of Proposition~\ref{P:growth} trivially. Condition 4-4' also hold since the set $E_0,B_0$ were picked analytic.

On the other hand,
by Corollary~\ref{C:growth2}, there exists $k''\ge k'$ such that
\(
c'_{\mathcal{E}}(k)>2n +1 +p(p-1)/2
\)
for all $k\ge k''$.
Fix $k> k''$ and let
\(
\varsigma: \Gamma_k \times [0, 1] \times \O(p) \to {\R^{pk}}
\)
be the surjective analytic mapping given by
\( \varsigma(x_a, x_b, \sigma, \mu, \RR)=\mu\RR\cdot\sigma \).
The compact set $L$,
image under $\varsigma$ of $\overline{A_k'\cap E_k} \times [0, 1] \times \O(p)$ is a semianalytic set of (strictly) positive codimension.
By Sard's Lemma, $L$ has zero measure, implying that $\R^{pk}\setminus L$ is dense.
Put
\[
W=
\left[
    B_{\R^{pk}}(0,\varepsilon_k) \cup
    \R^{kp} \setminus L
\right]
\cap 
\left[
    B_0\times \cdots\times B_{k-1}
\right].
\]
Then $W$ is clearly open and dense in $B_0\times \cdots\times B_{k-1}$.
Now let $(x_a', x_b', \mu, \RR)\in E_0 \times [0,1]\times \O(p)$
and assume that $ \sigma \in W$.
If $\sigma\in  B_{\R^{pk}}(0,\varepsilon_k)$,
then $\mu \RR\cdot\sigma\in  B_{\R^{pk}}(0,\varepsilon_k)$ and $\mu \RR\cdot \sigma\notin L$, so that $(x_a',x_b',\mu \RR\cdot\sigma)\notin A'_k\cap E_k$.
If $\sigma\neq  B_{\R^{pk}}(0,\varepsilon_k)$,
we can assume that 
$\mu\neq 0$ since $(x_a',x_b',0)\notin A'_k$.
Clearly, $\mu\RR\cdot\sigma\in B_0\times\cdots\times B_{k-1}$,
Let
$\nu:\R^{pk}\setminus\{0\}\to \S^{pk-1}$ be given by $\nu(x)=x/|x|$.
Because $0 \in L$,
the image of $\R^{pk} \setminus L$ under $\nu$ is well-defined.
Suppose for the sake of contradiction that
$(x_a', x_b', \mu\RR\cdot\sigma)\in A'_k\cap E_k$.
This implies that $\mu\RR\cdot\sigma \in L\setminus\{0\}$
and thus
$\nu(\sigma) \in \nu(\mu^{-1}\RR^{-1}\cdot L\setminus\{0\})=\nu(L\setminus\{0\})$.
As a result,
$\nu(\sigma) \in  \nu((\R^p)^k \setminus L) \cap \nu(L\setminus\{0\})=\emptyset$,
thus $\sigma=0$, which is a contradiction.
\end{proof}

Like Sussmann's \cite{MR0541865}, the last step of the proof is to move from spaces of jets to spaces of functions. Recall that $Z'(\T,\Kp)$ denotes the set of inputs $v\in C^\infty([0,\T],\R^p)$ such that there exist
$T\in (0,\T]$, $q$ integer, for which the map $(x_a,x_b)\mapsto \mathcal{H}_{q}(x_a,t,\mu \RR v)-\mathcal{H}_{q}(x_b,t,\mu \RR v)$ is an immersion that never vanishes over $\Kp$, for all $t\in [0,T]$, for all $\mu \in [0,1]$ and all $\RR\in \O(p)$. Let us first show that we can safely move from a good jet (one that avoids a critical set $A_k'$) to a good input (one that belongs to a $Z'(\T,\Kp)$).

\begin{lemma}\label{L:good_jet_to_good_input}
Let $\Kp$ be a compact subset of $\R^n\times \R^n\setminus \diag$. Let $k\in \N$ and let $\sigma\in (\R^p)^k$ be a $(k-1)$-jet such that for all $(x_a,x_b,\mu,\RR)\in \Kp\times [0,1]\times \O(p)$, $(x_a,x_b,\mu \RR\cdot \sigma)\notin A'_k$. Then any $u\in C^\infty ([0,T],\R^p)$ having its $(k-1)$-jet at $0$ given by $\sigma$ belongs to $Z'(\T,\Kp)$.
\end{lemma}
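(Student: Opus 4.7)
The plan is to take $q=k$ and show that both properties required in the definition of $Z'(\T,\Kp)$ hold at $t=0$ directly from the hypothesis, then persist on a small interval $[0,T]$ by continuity and compactness.

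First, for any $(\mu,\RR)\in[0,1]\times\O(p)$, the $(k-1)$-jet at $t=0$ of the rescaled input $\mu\RR u$ is exactly $\mu\RR\cdot\sigma$. Hence, by construction of $\HH_k$, both $\HH_k(x,0,\mu\RR u)$ and $\frac{\partial \HH_k}{\partial x}(x,0,\mu\RR u)$ depend on $u$ only through this jet. I now unpack the condition $(x_a,x_b,\mu\RR\cdot\sigma)\notin A'_k$: on the one hand, some $G_j$ with $j\le k$ is non-zero there, which is exactly $\HH_k(x_a,0,\mu\RR u)\neq \HH_k(x_b,0,\mu\RR u)$; on the other hand, for both $i=a$ and $i=b$, the family $\bigl(\partial_x H_j(x_i,\mu\RR\cdot\sigma)\bigr)_{1\le j\le k}$ has rank $n$, which amounts to $\frac{\partial \HH_k}{\partial x}(x_i,0,\mu\RR u)$ being injective. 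Since $\Kp\subset \R^n\times\R^n\setminus\diag$ we have $\Kp\setminus\diag=\Kp$; and every $x\in\uKp$ appears as the first or second coordinate of some pair of $\Kp$, so the injectivity extends to all of $\uKp$ at $t=0$.

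Second, I propagate these two properties to positive time. Since $u\in C^\infty$ and $f,h$ are analytic, the maps $(x_a,x_b,t,\mu,\RR)\mapsto \HH_k(x_a,t,\mu\RR u)-\HH_k(x_b,t,\mu\RR u)$ and $(x,t,\mu,\RR)\mapsto \frac{\partial \HH_k}{\partial x}(x,t,\mu\RR u)$ are continuous on $\Kp\times[0,\T]\times[0,1]\times\O(p)$ and $\uKp\times[0,\T]\times[0,1]\times\O(p)$ respectively. On the compact slice $\{t=0\}$, the norm of the first map and the smallest singular value of the second are bounded below by strictly positive constants by compactness. Uniform continuity in $t$ then yields some $T\in(0,\T]$ such that both strict lower bounds persist on $[0,T]$, giving the pair $(q,T)=(k,T)$ required by the definition of $Z'(\T,\Kp)$.

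There is no genuine obstacle: once one observes that $q=k$ is the natural choice and that the hypothesis is precisely the conjunction of the two open conditions (non-vanishing of the $\HH_k$-difference and injectivity of the Jacobian) one wants at $t=0$, the result reduces to standard openness-plus-compactness arguments.
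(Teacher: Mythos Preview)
Your proof is correct and follows essentially the same approach as the paper: both rely on the fact that the two conditions defining $Z'(\T,\Kp)$ hold at $t=0$ by hypothesis, and then propagate to a small interval $[0,T]$ by continuity and compactness of $\Kp\times[0,1]\times\O(p)$. The only cosmetic difference is that you argue directly via uniform continuity and positive lower bounds, whereas the paper phrases the same idea by contradiction (extracting a convergent subsequence of failures $t^N\to 0$ and using that $A'_k$ is closed).
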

\begin{proof}
Let $\sigma(t)$ denote the $(k-1)$-jet of $u$ at time $t$. It describes a smooth trajectory in the space $(\R^p)^k$ such that $\sigma(0)=\sigma$. Assume by contradiction that $u\notin Z'(\T,\Kp)$. By point 3 of Proposition~\ref{P:Sussmann_jets-to-universal-inputs_2}, it follows that for any $N\in \N$, there exists $t^N\in (0,\T/N]$, $(x_a^N,x_b^N,\mu^N,\RR^N)\in \Kp\times [0,1]\times \O(p)$ such that  $(x_a^N,x_b^N,\mu^N\RR^N\cdot \sigma(t^N))\in A'_k$.
The sequence $(x_a^N,x_b^N,\mu^N,\RR^N)$ converges up to extraction as $N\to \infty$, since it belongs to a compact set. We denote the limit with $(x_a^\infty,x_b^\infty,\mu^\infty,\RR^\infty)\in \Kp\times [0,1]\times \O(p)$. Also, $\sigma(t_N)\to \sigma$. 
The set $A'_k$ is closed, hence $(x_a^\infty,x_b^\infty,\mu^\infty\RR^\infty\cdot \sigma)\in A'_k$, which is a contradiction.
\end{proof}

We now combine Proposition~\ref{P:jets} and Lemma~\ref{L:good_jet_to_good_input} to prove the density of good inputs if we avoid the diagonal.

\begin{lemma}\label{L:ZisDense}
Let $\Kp\subset (\R^n\times \R^n)\setminus\diag$ be a compact set. The set $Z'(\T,\Kp)$ is dense in $C^{\infty}([0,T],\R^p)$ in the compact-open topology.
\end{lemma}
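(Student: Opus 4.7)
The plan is to combine Proposition~\ref{P:jets} with Lemma~\ref{L:good_jet_to_good_input} through a small polynomial perturbation of $u$ that alters its jet at $0$ to avoid the bad set $A'_k$ uniformly in $(x_a,x_b,\mu,\RR)\in\Kp\times[0,1]\times\O(p)$, while keeping finitely many derivatives of $u$ perturbed by less than a prescribed amount.

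First I would fix $u\in C^\infty([0,\T],\R^p)$ and a basic neighborhood $V(u,\varepsilon_0,\dots,\varepsilon_q)$ from Remark~\ref{R:defVueps}, then choose a sequence of open balls $B_\ell = B_{\R^p}(0,R_\ell)$ with $R_\ell > |u^{(\ell)}(0)|$ for every $\ell\ge 0$. This guarantees that, for any $k$ produced later, the truncated jet $\sigma_u \defeq (u(0),u'(0),\dots,u^{(k-1)}(0))$ lies in the interior of $B_0\times\cdots\times B_{k-1}$. Applying Proposition~\ref{P:jets} to $\Kp$ and $(B_\ell)_{\ell\ge 0}$ yields an integer $k$ and an open dense subset $W\subset B_0\times\cdots\times B_{k-1}$ such that every $\sigma\in W$ satisfies $(x_a,x_b,\mu\RR\cdot\sigma)\notin A'_k$ for all $(x_a,x_b,\mu,\RR)\in\Kp\times[0,1]\times\O(p)$.

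Next I would exploit the density of $W$ near $\sigma_u$: given any $\delta>0$ there exists $\tilde\sigma=(\tilde\sigma_0,\dots,\tilde\sigma_{k-1})\in W$ with $|\tilde\sigma_\ell - u^{(\ell)}(0)|<\delta$ for $\ell=0,\dots,k-1$. I would then set
\[
p(t)=\sum_{\ell=0}^{k-1}\bigl(\tilde\sigma_\ell - u^{(\ell)}(0)\bigr)\frac{t^\ell}{\ell!},
\qquad v=u+p,
\]
so that the $(k-1)$-jet of $v$ at $0$ is exactly $\tilde\sigma\in W$. A direct Taylor estimate gives $\|v^{(j)}-u^{(j)}\|_\infty=\|p^{(j)}\|_\infty \leq \delta\sum_{\ell=j}^{k-1} \T^{\ell-j}/(\ell-j)!$ for $0\le j<k$ and $\|v^{(j)}-u^{(j)}\|_\infty=0$ for $j\ge k$. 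Choosing $\delta$ small enough therefore places $v$ inside $V(u,\varepsilon_0,\dots,\varepsilon_q)$.

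Finally, the $(k-1)$-jet $\tilde\sigma$ of $v$ at $0$ satisfies the hypothesis of Lemma~\ref{L:good_jet_to_good_input}, so $v\in Z'(\T,\Kp)$. Since this construction produces an element of $Z'(\T,\Kp)$ in an arbitrary basic neighborhood of an arbitrary $u$, density follows. I expect no deep obstacle here; everything has been prepared by Proposition~\ref{P:jets} and Lemma~\ref{L:good_jet_to_good_input}. The only real care is bookkeeping, namely that the radii $R_\ell$ are chosen \emph{before} invoking Proposition~\ref{P:jets} so that $\sigma_u$ is interior to $B_0\times\cdots\times B_{k-1}$, which is what legitimizes selecting $\tilde\sigma\in W$ arbitrarily close to $\sigma_u$.
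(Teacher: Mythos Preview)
Your proposal is correct and follows essentially the same approach as the paper: fix a basic neighborhood of $u$, choose balls $B_\ell$ centered at $0$ containing $u^{(\ell)}(0)$, invoke Proposition~\ref{P:jets} to obtain $k$ and the open dense set $W$, perturb $u$ by a polynomial so that its $(k-1)$-jet at $0$ lands in $W$, and conclude via Lemma~\ref{L:good_jet_to_good_input}. The only cosmetic difference is that the paper packages the smallness of the perturbation through the preimage $Q=\omega^{-1}(V(u,\varepsilon_0,\dots,\varepsilon_k))$ of the continuous map $\sigma\mapsto v^\sigma$, whereas you give the explicit Taylor estimate on $\|p^{(j)}\|_\infty$; both arrive at the same conclusion.
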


\begin{proof}
We use the fundamental system of neighborhoods for the topology of $C^\infty([0,\T],\R^p)$ recalled in Remark~\ref{R:defVueps}. Let 
$u\in C^\infty([0,\T],\R^p)$, $k\in \N$, $\varepsilon_0,\dots,\varepsilon_k$ positive constants.
Let us find an element of $Z'(\T,\Kp)$ inside $V(u,\varepsilon_0,\dots,\varepsilon_k)$.

We complete the sequence $(\varepsilon_j)_{j\in \N}$ into an arbitrary sequence (such as $\varepsilon_j=1$ for all $j> k$ for instance). 
For $j\geq 0$, we set $B_j$ to be the open ball of center $0$ and radius $|u^{(j)}(0)|+\varepsilon_j$.
Since
Assumptions \ref{ass:obs0}, \ref{ass:immersion0}
hold,
we apply Proposition~\ref{P:jets} to find $k'>0$ and $W$ an open and dense subset of $B_0\times B_1\times\cdots\times B_{k'-1}$, such that if $\sigma\in W$, 
for any $\mu \in [0,1]$, $\RR\in \O(p)$, $(x_a,x_b,\mu \RR\cdot\sigma)\notin  A'_k$.
We can assume that $k'\geq k+1$, up to replacing $W$ with $W\times B_{k'+1}\times \cdots\times B_k$.

Let $\sigma^u=(\sigma_0^u,\dots,\sigma_{k'-1}^u)\in (\R^p)^{k'}$ be the $(k'-1)$-jet of $u$ at $0$. With $\sigma=(\sigma_0,\dots,\sigma_{k'-1})$, we define the input
$$
v^\sigma(t)=u(t)+\sum_{j=0}^{k'-1} (\sigma_j-\sigma_j^u)t^j/j!
$$
It is clear that $\omega:\sigma\mapsto  v^\sigma$ is continuous and allows to define $Q={\omega}^{-1}(V(u,\varepsilon_0,\dots,\varepsilon_k))$, an open neighborhood of $\sigma^u$.
Furthermore, $Q\subset B_0\times B_1\times\dots\times B_{k}\times (\R^p)^{k'-k-1}$. Since $W$ is an open and dense subset of $B_0\times B_1\times\dots\times B_{k'-1}$, there exists $\sigma\in W\cap Q$, which by construction satisfies $v^\sigma \in V(u,\varepsilon_0,\dots,\varepsilon_k)$.
By Lemma~\ref{L:good_jet_to_good_input}, we then also have that $v^\sigma\in Z'(\T,\Kp)$, hence the density of $Z'(\T,\Kp)$.
\end{proof}

Let us consider the sequence 
\begin{equation}
    \label{E:KN}
    \Kp_N=\left\{(x_a,x_b)\in \KK^2\mid |x_a-x_b|\ge \frac{\max_{\KK^2} |x_a-x_b|}{N+3}\right\},\qquad N\in \N.
\end{equation}
We can assume that $\KK$ is not reduced to a single point, otherwise $\UU_\KK$ is trivially equal to $C^\infty([0,\T],\R^p)$. This implies that $\Kp_N$ is never empty. From Lemma~\ref{L:ZisDense}, $Z'(\T,\Kp_N)$ is dense in $C^\infty([0,\T],\R^p)$ for any $N\in \N$. By Lemma~\ref{L:ZTKopen}, $Z'(\T,\Kp_N)$ is also open.

\begin{lemma}\label{L:intersect}
If $\KK$ is compact  then $\bigcap_{N\in \N}Z'(\T,\Kp_N)\subset \UU_\KK$.
\end{lemma}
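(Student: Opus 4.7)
The plan is to extract two complementary pieces of information from the hypothesis that $v$ lies in $\bigcap_{N\in\N} Z'(\T,\Kp_N)$ and paste them together to produce membership in $\UU_\KK$. From $Z'(\T,\Kp_0)$ I would harvest the immersion property (plus a quantitative separation for nearby points), and from $Z'(\T,\Kp_N)$ with $N$ large I would harvest the separation of pairs that are far apart. The key preliminary observation is that $\uKp_N = \KK$ for every $N\in\N$: writing $D = \max_{\KK^2}|x_a-x_b|$ and fixing a diameter-realising pair $(a,b)\in\KK^2$, any $x\in\KK$ satisfies $\max(|x-a|,|x-b|)\geq D/2\geq D/(N+3)$, so $x$ is indeed the projection of some element of $\Kp_N$.

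Applying the hypothesis at $N=0$, I obtain $T_0\in(0,\T]$ and $q_0\in\N$ such that $\frac{\partial \mathcal{H}_{q_0}}{\partial x}(x,t,\mu\RR v)$ is injective for all $(x,t,\mu,\RR)\in \KK\times[0,T_0]\times[0,1]\times\O(p)$. Compactness of the parameter space together with continuity of $\frac{\partial \mathcal{H}_{q_0}}{\partial x}$ then yields, exactly as in the proof of Lemma \ref{L:ZTKopen} via Lemma \ref{lem:iminj}, constants $\rho>0$ and $\eta>0$ such that
\begin{equation*}
\left|\mathcal{H}_{q_0}(x_a,t,\mu\RR v) - \mathcal{H}_{q_0}(x_b,t,\mu\RR v)\right| \;\geq\; \rho\,|x_a-x_b|
\end{equation*}
for every $(x_a,x_b)\in \KK^2$ with $|x_a-x_b|\leq\eta$ and every $(t,\mu,\RR)\in[0,T_0]\times[0,1]\times\O(p)$. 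In particular nearby distinct points of $\KK$ are separated by $\mathcal{H}_{q_0}$. I then choose $N$ so large that $D/(N+3)<\eta$ and use $v\in Z'(\T,\Kp_N)$ to obtain $T_N\in(0,\T]$ and $q_N\in\N$ realising the non-vanishing condition on $\Kp_N\setminus\diag$ uniformly in $(t,\mu,\RR)\in[0,T_N]\times[0,1]\times\O(p)$.

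To conclude, I set $q=\max(q_0,q_N)$ and $T=\min(T_0,T_N)$, and invoke the fact (stated just after the definition of $\mathcal{H}$) that strong differential observability at some order implies strong differential observability at any higher order, since $\mathcal{H}_q$ simply aggregates $\mathcal{H}_{q_0}$ and $\mathcal{H}_{q_N}$ as subsets of its coordinates. Hence the immersion property at $q_0$ and the non-vanishing property at $q_N$ both pass up to order $q$. For $x_a\neq x_b$ in $\KK$, the dichotomy $|x_a-x_b|\leq\eta$ versus $|x_a-x_b|>\eta>D/(N+3)$ gives $\mathcal{H}_q(x_a,t,\mu\RR v)\neq \mathcal{H}_q(x_b,t,\mu\RR v)$ either from the Lipschitz-type lower bound above or from $(x_a,x_b)\in\Kp_N$; meanwhile $x\mapsto\mathcal{H}_q(x,t,\mu\RR v)$ is an immersion on $\KK$ by inheritance from $q_0$. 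Thus $v\in\UU_\KK$.

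The only genuinely technical step is the passage from pointwise injective Jacobians to a quantitative separation of nearby points, uniformly over the compact parameter set. Because $\KK$ need not be convex this requires a finite cover by small pieces and a mean value argument, but this is precisely the content of Lemma \ref{lem:iminj} and has already been used in the openness argument of Lemma \ref{L:ZTKopen}, so I expect no new difficulty.
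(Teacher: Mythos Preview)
Your proposal is correct and follows essentially the same route as the paper: harvest the immersion on all of $\KK$ from $Z'(\T,\Kp_0)$ (using $\uKp_0=\KK$), turn it into a quantitative near-diagonal separation via Lemma~\ref{lem:iminj}, then pick $N$ large so that $\Kp_N$ covers the off-diagonal remainder and conclude with $q=\max(q_0,q_N)$, $T=\min(T_0,T_N)$. Your argument that $\uKp_N=\KK$ via $\max(|x-a|,|x-b|)\geq D/2\geq D/(N+3)$ is in fact slightly cleaner than the paper's $D/3$ version, and you correctly note that the quantitative bound on far-apart pairs is not needed for bare membership in $\UU_\KK$ (the paper extracts one anyway via item~(\ref{item1}) of Lemma~\ref{lem:iminj}, but only to package the conclusion).
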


\begin{proof}
The intersection $\bigcap_{N\in \N}Z'(\T,\Kp_N)$ is nonempty as an intersection of open and dense sets for the compact-open topology (it satisfies the Baire category theorem). Let $v\in \bigcap_{N\in \N}Z'(\T,\Kp_N)$, let us check that $v\in \UU_\KK$.

Let $\bar x_a,\bar x_b$ be such that $(\bar x_a,\bar x_b)$ realizes $\max_{\KK^2} |x_a-x_b|$. If $x\in \KK$ is such that $|x-\bar x_a|<|\bar x_a-\bar x_b|/3$, we automatically get by triangular inequality that  $|x-\bar x_b|\geq 2 |\bar x_a-\bar x_b|/3$. Hence the projected set $\uKp_0$ (see \eqref{E:defuKp}) is  equal to $\KK$.
As a consequence, having $v\in Z'(\T,\Kp_0)$ implies that 
there exist 
$T_0\in (0,\T]$ and $q_0\in \N$ for which $x\mapsto \mathcal{H}_{q_0}(x,t,\mu\RR v)$ is an immersion 
over $\KK$ for
all $t\in[0,T_0]$, all $\mu\in[0,1]$ and all $\RR\in \O(p)$.
Applying Lemma~\ref{lem:iminj}, item~(\ref{item2}), thanks of the compactness of $[0,T_0]\times [0,1]\times \O(p)$, there exists $\rho_0,\eta_0>0$ such that for all $t\in[0,T_0]$, all $\mu\in[0,1]$, all $\RR\in \O(p)$, and all $(x_a,x_b)\in \Kp$ such that $|x_a-x_b|<\eta_0$ we have
\[
\left|
    \HH_{q_0}(x_a,t,\mu\RR v)
    -\HH_{q_0}(x_b,t,\mu\RR v)
\right|
\geq 
\rho_0|x_a-x_b|.
\]
Let $N^*\in \N$ be such that $(N^*+3)\eta_0 >\max_{\KK^2} |x_a-x_b|$. From $v\in Z'(\T,\Kp_{N^{*}})$, there exists   $T^{*}\in (0,\T]$, $q^{*}\in \N$ for which $(x_a,x_b)\mapsto \mathcal{H}_{q^{*}}(x_a,t,\mu\RR v)-\mathcal{H}_{q^{*}}(x_b,t,\mu\RR v)$ never vanishes over $\Kp_{N^{*}}$ (for
any $t\in[0,T^*]$, any $\mu\in[0,1]$ and any $\RR\in \O(p)$). Applying Lemma~\ref{lem:iminj}, item~(\ref{item1}), again from the compactness of $[0,T^*]\times [0,1]\times \O(p)$, there exists $\rho^*>0$ such that for all 
$t\in[0,T^*]$, all $\mu\in[0,1]$, all $\RR\in \O(p)$, and all $(x_a,x_b)\in \Kp$ such that $|x_a-x_b|\geq \eta_0$ we have
\[
\left|
    \HH_{q^*}(x_a,t,\mu\RR v)
    -\HH_{q^*}(x_b,t,\mu\RR v)
\right|
\geq 
\rho^*|x_a-x_b|.
\]
Take $T=\min(T_0,T^{*})>0$, $q=\max(q_0,q^{*})$, $\rho=\min(\rho_0,\rho^{*})>0$ and we get for all
$(x_a,x_b)\in \KK^2, t\in [0,T], \mu\in  [0,1],\RR \in  \O(p)$:
\[
\left|
    \HH_{q'}(x_a,t,\mu\RR v)
    -\HH_{q'}(x_b,t,\mu\RR v)
\right|
\geq 
\rho|x_a-x_b|,
\quad 
\forall (x_a,x_b)\in \KK^2, t\in [0,T'], \mu\in  [0,1],\RR \in  \O(p).
\]
Since we also have the immersion property on $K$ (again thanks to $\uKp_0=\KK$), we have proved that $v\in  \UU_\KK$.
\end{proof}

Lemmas~\ref{L:ZisDense} and \ref{L:intersect} immediately imply Theorem~\ref{lem:sus_bis}-{\it (i)}.
The argument we use to prove Theorem~\ref{lem:sus_bis}-{\it (ii)} is similar to the proof of \cite[Theorem 2.1]{MR0541865}. The cited theorem only proves existence of analytic universal inputs but a few elements from Theorem~\ref{lem:sus_bis}-{\it (i)} extend Sussmann's argument to also provides density.

\begin{proof}[Proof of Theorem~\ref{lem:sus_bis}]
Assume $\KK$ is not reduced to a point, under which condition $\UU_\KK=C^\infty([0,\T],\R^p)$.
As a consequence of Lemmas~\ref{L:ZisDense} and \ref{L:intersect}, $\bigcap_{N\in \N}Z'(\T,\Kp_N)$  is a  nonempty intersection of open and dense subsets of $C^\infty([0, \T], \R^p)$, and point {\it (i)} stands.
What remains to show is that {\it (ii)} holds.

Let $u\in C^\infty([0,\T],\R^p)$, $k\in \N$, $\varepsilon_0,\dots,\varepsilon_k$ positive constants, and the base neighborhood
$V(u,\varepsilon_0,\dots,\varepsilon_k)$.
Let us prove that there exists an input $v\in \UU_\KK\cap C^\omega([0,\T],\R^p)\cap V(u,\varepsilon_0,\dots,\varepsilon_k)$.

We first apply Weierstrass density theorem to find a polynomial $P$ belonging to $V(u,\varepsilon_0/2,\dots,\varepsilon_k/2)$. Then we define $(B'_i)_{i\in \N}$ a sequence of open balls centered at $P^{(i)}(0)$ and of radii 
$r=\min(\varepsilon_0,\dots,\varepsilon_k) \e^{-T}/4$. That way for any input $w\in C^{\omega}([0,T],\R^p)$ such that $w^{(i)}(0)\in B'_i$, we have 
$$
P(t)-w(t)=\sum_{i=0}^{\infty }\frac{P^{(i)}(0)-w^{(i)}(0)}{i!}t^i,
$$
which implies $w\in V(P,\varepsilon_0/4,\dots,\varepsilon_k/4)\subset V(u,\varepsilon_0,\dots,\varepsilon_k)$.

Let $(B_i)_{i\in \N}$ be a sequence of open balls centered at 0 and of radii 
$P^{(i)}(0)+2r$. 
For every $N\in \N$, we apply Proposition~\ref{P:jets} on the compact set $\Kp_N$ to find $q_N$, an open and dense $W_N\subset B_0\times \cdots\times B_{q_N-1}$ such that for all $(x_a,x_b)\in \Kp_N$, $\sigma\in W_N$, $\mu\in [0,1]$, $\RR\in \O(p)$,	$(x_a,x_b,\mu \RR\cdot\sigma)\notin  A'_q$. Naturally, this implies that if the $(q-1)$-jet of the input $w$ at $t=0$ belongs to $W_N$ then $w\in Z'(\T,\Kp_N)$ from Lemma~\ref{L:good_jet_to_good_input}.
We complete each $W_N$ into $W'_N=W_N\times \prod_{i=q}^{\infty} B_i$. Each $W'_N$ is an open and dense subset of $\prod_{i=0}^{\infty} B_i$ endowed with the product topology. Hence $\bigcap_{N=0}^{\infty} W'_N$ is a countable intersection of open and dense subsets. The product topology satisfies the Baire category theorem and $\bigcap_{N=0}^{\infty} W'_N$ is dense in $\prod_{i=0}^{\infty} B_i$.
Since the product $\prod_{i=0}^{\infty} B'_i $ is open in the product space $\prod_{i=0}^{\infty} B_i$, the density of $\bigcap_{N=0}^{\infty} W'_N$ implies that the intersection $\bigcap_{N=0}^{\infty} W'_N\cap \prod_{i=0}^{\infty} B'_i $ is nonempty. For any $\sigma$ in that intersection, set
$v(t)=\sum_{i=0}^\infty\frac{\sigma_i}{i!}t^i$. We have by construction 
\[
v\in V(u,\varepsilon_0,\dots,\varepsilon_k)\cap\left(
\bigcap_{N=0}^\infty Z'(\T,\Kp_N)
\right)
\subset V(u,\varepsilon_0,\dots,\varepsilon_k)\cap \UU_\KK
\]
which proves the density and concludes the argument.
\end{proof}

\section{Conclusion and perspective}
In this article, we have introduced a new method for designing stabilizing output feedback law for nonlinear (analytic) control systems. 
This approach is based on the use of a control template, which consists 
\startmodif
in an input making the system observable over a finite time interval. The control input applied to the system is perodically ``templated'' on it, up to a rescaling and an isometry.
\stopmodif
\startmodif
Thanks to this templating, the state of the input-output system retains observability, enabling the use of the high-gain observer.
\stopmodif
The initial result we achieve is to demonstrate that the existence of these control templates and (strong differential) observability at the target, along with the existence of a globally asymptotically and locally exponentially stabilizing state feedback loop at the origin, allow the derivation of a hybrid control law that semi-globally stabilizes the origin through dynamic output feedback.
Subsequently, we establish by extending Sussman's universality result \cite{MR0541865} that, within our context,
\startmodif
control templates are generic among smooth inputs.
\stopmodif
The combination of these two results  allows us to obtain the first generic semiglobal dynamic output feedback stabilization result for nonlinear control systems.
We do not rely on any uniform observability assumption \cite{TeelPraly1994, jouan1996finite}, nor on any passivity or dissipativity properties \cite{PRIEUR2004847, SACCHELLI20204923}.

It remains an open question whether a similar result could be achieved by using a two modes strategy assuming exponential stabilizability and observability of the null input as in \cite{shim2003asymptotic} (the potential difficulty lies in passing from practical to exact stablization), or as in \cite{coron1994stabilization} (it lies in passing from local to semiglobal stablization). In any case, the template control method has the advantage of not relying on a two modes separation: estimation and observation are achieved simultaneously, which makes it closer to usual observer based methods.
The observability assumption we make is still stronger than the ones imposed by Coron in \cite{coron1994stabilization} that are almost necessary (see \cite{brivadis2021new}).
We believe that methods based on template control could be further developed to address other observability issues, including in particular the problem of stabilization at an unobservable target.

\appendix

\section{Additional technical lemmas}\label{sec:tec}

\begin{lemma}\label{lem:iminj}
Let $n$ and $m$ be two positive integers,
$\Kp$ be a compact subset of $\R^n\times \R^n$,
$\Pi$ be a  topological space,
and
$F:\R^n\times\Pi\to\R^m$ be a continuous map. Define $F_\pi:= x\mapsto F(x, \pi)$,
for all $\pi\in\Pi$.

Recall that $\diag_\eta$ and $\uKp$ are defined by \eqref{E:defDeta} and \eqref{E:defuKp}, respectively.

\begin{enumerate}[(i)]
    \item\label{item1} There exists a continuous function $\rho_1:\Pi\times \R_+\to \R_+$ such that for all $(x_a, x_b, \pi)\in(\Kp\setminus\mathring{\diag}_\eta)\times\Pi$,
    \begin{equation}
        |F_\pi(x_a)-F_\pi(x_b)|\geq \rho_1(\pi,\eta) |x_a-x_b|,
    \end{equation}
    and if $(x_a,x_b)\mapsto F_\pi(x_a)-F_\pi(x_b) $ never vanishes over $\Kp\setminus \mathring{\diag}_\eta$ for any $\pi\in\Pi$, then $\rho_1$ is positive.

    \item \label{item2} Assume $F_\pi$ is $C^1$ over $\uKp$ and that $(x, \pi)\mapsto \frac{\partial F_\pi}{\partial x}(x)$ is continuous. There exists two continuous functions $\eta_2:\Pi\to \R_+$ and $\rho_2:\Pi \to \R_+$ such that for all $\pi\in \Pi$, for all $(x_a, x_b)\in\Kp\cap\diag_{\eta_2(\pi)}$, 
    \begin{equation}
        |F_\pi(x_a)-F_\pi(x_b)|\geq \rho_2(\pi) |x_a-x_b|,
    \end{equation}
    and if $\frac{\partial F_\pi}{\partial x}(x)$ is injective for all $(x, \pi)\in\uKp\times\Pi$, then $\eta_2$ and $\rho_2$ are positive.

    \item \label{item3} If $\Pi$ is compact and $F_\pi$ is an injective immersion over a compact $\KK\subset \R^n$ for all $\pi\in\Pi$, then there exists $\rho>0$ such that
    \begin{equation}\label{eq:Finj}
        |F_\pi(x_a)-F_\pi(x_b)|\geq \rho |x_a-x_b|,
    \end{equation}
    for all $(x_a, x_b, \pi)\in\KK^2\times\Pi$, and there exists a continuous map $F^\invert: \R^m\times\Pi\to\R^n$ 
    that is globally Lipschitz with respect to its first variable, uniformly with respect to the second, and
    such that
    $F^\invert(F(x, \pi),\pi) = x$ for all $(x,\pi)\in\KK\times\Pi$.
\end{enumerate}

\end{lemma}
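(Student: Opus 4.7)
The plan is to prove the three items in order, using elementary compactness and Berge-type continuity results for (i) and (ii), and a component-wise McShane--Whitney extension for the final part of (iii).

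For item \textit{(i)}, I will simply take the quotient
\[
\rho_1(\pi,\eta) \defeq \inf_{(x_a,x_b)\in\Kp\setminus\mathring{\diag}_\eta} \frac{|F_\pi(x_a)-F_\pi(x_b)|}{|x_a-x_b|},
\]
with the convention $\rho_1(\pi,\eta)=+\infty$ when the set of constraints is empty. Since the quotient is jointly continuous in $((x_a,x_b),\pi)$ on $\{x_a\neq x_b\}\times\Pi$ and the correspondence $\eta\mapsto \Kp\setminus\mathring{\diag}_\eta$ is upper hemicontinuous with compact values, Berge's maximum theorem yields continuity of $\rho_1$ in $(\pi,\eta)$. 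Positivity under the non-vanishing assumption is the statement that a positive continuous function on a compact set has a positive infimum.

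For item \textit{(ii)}, the key identity is the integral form of the mean-value theorem, $F_\pi(x_a)-F_\pi(x_b) = M_\pi(x_a,x_b)(x_a-x_b)$ with $M_\pi(x_a,x_b)=\int_0^1 \frac{\partial F_\pi}{\partial x}(x_b+t(x_a-x_b))\,\dd t$. Let $\sigma_{\min}(\pi) \defeq \min_{x\in \uKp} \sigma_{\min}\bigl(\tfrac{\partial F_\pi}{\partial x}(x)\bigr)$, which is continuous in $\pi$ by joint continuity of $(x,\pi)\mapsto \frac{\partial F_\pi}{\partial x}(x)$ and compactness of $\uKp$. Define $\rho_2(\pi)\defeq \sigma_{\min}(\pi)/2$. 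Introducing the modulus of continuity
\[
\omega(\pi,\eta) \defeq \sup\bigl\{\|\tfrac{\partial F_\pi}{\partial x}(x) - \tfrac{\partial F_\pi}{\partial x}(x')\| \mid x,x'\in\uKp,\ |x-x'|\leq\eta\bigr\},
\]
which is continuous in $(\pi,\eta)$ and vanishes at $\eta=0$, I set $\eta_2(\pi)$ to be a continuous selection from $\{\eta \geq 0 \mid \omega(\pi,\eta)\leq \rho_2(\pi)\}$, e.g. $\eta_2(\pi) = \int_0^1 \mathbf{1}_{\omega(\pi,s)\leq \rho_2(\pi)}\,\dd s$ truncated by a fixed constant, or any similar continuous construction. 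On the set $|x_a-x_b|\leq \eta_2(\pi)$ the matrix $M_\pi(x_a,x_b)$ lies within distance $\rho_2(\pi)$ of $\frac{\partial F_\pi}{\partial x}(x_b)$, whose smallest singular value is $\geq 2\rho_2(\pi)$, yielding the bound. When the Jacobian is everywhere injective on $\uKp\times\Pi$, $\sigma_{\min}$ and hence $\rho_2$ are strictly positive, which in turn forces $\eta_2>0$.

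For item \textit{(iii)}, I combine the two previous items with compactness of $\Pi$. Applying (ii) with $\Kp=\KK^2$ produces positive continuous $\eta_2$ and $\rho_2$ on $\Pi$; since $\Pi$ is compact, $\eta_*\defeq\min_\Pi \eta_2>0$ and $\rho_*\defeq\min_\Pi \rho_2>0$. Then (i) applied with $\eta=\eta_*$ produces a positive continuous $\rho_1(\cdot,\eta_*)$, whose minimum over $\Pi$ is a positive constant $\rho_1^*$. Setting $\rho=\min(\rho_*,\rho_1^*)$ yields \eqref{eq:Finj} on $\KK^2\times\Pi$. For the inverse, I use the component-wise McShane formula
\[
F^\invert_j(y,\pi) \defeq \inf_{x\in\KK} \bigl[x_j + \tfrac{1}{\rho}\,|y-F(x,\pi)|\bigr], \qquad j=1,\dots,n.
\]
The bound \eqref{eq:Finj} implies that for $y=F(x_0,\pi)$ with $x_0\in\KK$ one has $x_j + \tfrac{1}{\rho}|F(x_0,\pi)-F(x,\pi)| \geq x_j + |x_0-x| \geq (x_0)_j$, with equality attained at $x=x_0$, so $F^\invert(F(x_0,\pi),\pi)=x_0$. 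Each $F^\invert_j$ is $1/\rho$-Lipschitz in $y$ uniformly in $\pi$ by the standard McShane estimate, and continuous in $\pi$ because $(x,\pi)\mapsto F(x,\pi)$ is continuous and $\KK$ is compact (infimum of a continuous function over a compact parameter-independent set).

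The main obstacle is the continuity of $\eta_2$ in item (ii): producing a \emph{continuous} width function from a pointwise modulus-of-continuity argument requires a careful selection and is really the only non-cosmetic point in the proof. Everything else is a standard compactness argument, and Kirszbraun/McShane handles the extension of the inverse cleanly.
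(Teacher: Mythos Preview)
Your approach matches the paper's almost exactly: item (i) is defined by the same infimum quotient, item (iii) combines (i)--(ii) over compact $\Pi$ and uses the component-wise McShane extension for $F^\invert$, just as the paper does.

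The only substantive difference is in how $\eta_2$ is built in item (ii). Where you try to make a continuous selection from $\{\eta\geq 0:\omega(\pi,\eta)\leq\rho_2(\pi)\}$, the paper avoids the selection issue by writing
\[
F_\pi(x_a)-F_\pi(x_b)=\frac{\partial F_\pi}{\partial x}(x_b)(x_a-x_b)+|x_a-x_b|\,\mathcal{E}(x_a,x_b,\pi)
\]
with a continuous remainder $\mathcal{E}$ vanishing on the diagonal, and then sets
\[
\eta_2(\pi)=\min\bigl\{\,|x_a-x_b|\;:\;(x_a,x_b)\in\Kp,\ |\mathcal{E}(x_a,x_b,\pi)|\geq\rho_2(\pi)\,\bigr\}.
\]
This is an explicit minimum of a continuous function over a compact (parameter-dependent) set, so continuity in $\pi$ comes for free from a Berge-type argument, rather than from a selection. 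Your proposed formula $\eta_2(\pi)=\int_0^1\mathbf{1}_{\omega(\pi,s)\leq\rho_2(\pi)}\,\dd s$ is \emph{not} obviously continuous in $\pi$: since $\omega(\pi,\cdot)$ is nondecreasing, this integral equals $\sup\{s\in[0,1]:\omega(\pi,s)\leq\rho_2(\pi)\}$, and that sup can jump when $\omega(\pi,\cdot)$ has a plateau at height $\rho_2(\pi)$. So the obstacle you flagged is real for your specific construction, and the paper's formulation sidesteps it cleanly.
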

\begin{proof}
This lemma is an adaptation of \cite[Lemma 3.2]{doi:10.1137/110853091}, where we investigate the dependency of $\rho_1$ and $\rho_2$ with respect to the parameter $\pi$. The last item is a direct consequence of \cite[Lemma A.12]{bernard2019observer}, but we give a proof for the sake of completeness.

\begin{enumerate}[{\it(i)}]

\item The result is obvious if $\Kp$ is empty (pick $\rho_1 = 1$).
Otherwise, for all $\pi\in\Pi$ and all $\eta\in [0, d]$ with $d:=\max_\Kp |x_a-x_b|$, define
$$\rho_1(\pi, \eta) = \min_{(x_a, x_b)\in\Kp\setminus \mathring{\diag}_\eta}\frac{|F_\pi(x_a)-F_\pi(x_b)|}{|x_a-x_b|}.$$
Note that $\rho_1$ is well-defined and continuous since $F$ is continuous and $\Kp\setminus \mathring{\diag}_\eta$ is a nonempty compact set since $\eta\leq d$.
Moreover, $\rho_1$ is positive if $(x_a,x_b, \pi)\mapsto F_\pi(x_a)-F_\pi(x_b) $ does not vanish on $(\Kp\setminus \mathring{\diag}_\eta)\times\Pi$.
If $\eta>d$, define $\rho_1(\pi, \eta) = \rho_1(\pi, d)$.

    \item The result is obvious if $\Kp$ is empty (pick $\eta_2=1$ and $\rho_2 = 1$), hence we now assume $\Kp$ nonempty.
Since $(x, \pi)\mapsto \frac{\partial F_\pi}{\partial x}(x)$ is continuous, there exists a continuous function $\mathcal{E}:(\R^n)^2\times\Pi\to\R^n$ such that
$\mathcal{E}(0, 0, \pi) = 0$ for all $\pi\in\Pi$, and we have
for all $(x_a, x_b)\in (\R^n)^2$,
$$
F_\pi(x_a)-F_\pi(x_b) = \frac{\partial F_\pi}{\partial x}(x_b)(x_a-x_b) + |x_a-x_b| \mathcal{E}(x_a, x_b, \pi).
$$
Define
$$
\rho_2(\pi) = \frac{1}{2}\min_{\substack{(x,\xi)\in\uKp\times\R^n\\|\xi|=1}} \left|\frac{\partial F_\pi}{\partial x}(x)\xi\right|.
$$
Note that $\rho_2$ is well-defined and continuous since $(x,\pi)\mapsto\frac{\partial F_\pi}{\partial x}(x)$ is continuous and $\uKp$ is a nonempty compact set.
If $\frac{\partial F_\pi}{\partial x}(x)$ is injective for all $(x, \pi)\in\uKp\times\Pi$, then $\rho_2$ is positive.

Define 
$$
\eta_2(\pi) = \min_{\substack{(x_a,x_b)\in\Kp\\|\mathcal{E}(x_a, x_b, \pi)|\geq \rho_2(\pi)}}|x_a-x_b|.
$$
Note that $\eta_2$ is well-defined and continuous since $\mathcal{E}$ is continuous, $\mathcal{E}(0, 0, \pi) = 0$, and $\Kp$ is a nonempty compact set.
Moreover, $\eta_2(\pi)>0$ if $\rho_2(\pi)>0$.
If $(x_a, x_b)\in \Kp\cap\diag_{\eta_2(\pi)}$, then $|\mathcal{E}(x_a, x_b, \pi)|\leq\rho_2(v)$, which yields
$$
|F_\pi(x_a) - F_\pi(x_b)| \geq 2\rho_2(v)|x_a-x_b| - |x_a-x_b|\mathcal{E}(x_a, x_b, \pi) \geq \rho_2(v)|x_a-x_b|.
$$

\item 
For each $\pi\in\Pi$,
the map $F_\pi$ is injective, hence admits a left inverse
$F^{\invert}(\cdot, \pi): F_\pi(\KK)\to\KK$.
Applying \eqref{item1} and \eqref{item2} with $\Kp = \KK^2$, define $\eta = \min_\Pi \eta_2$ and $\rho = \min_\Pi (\rho_1(\cdot, \eta), \rho_2)$. Then \eqref{eq:Finj} holds for all $(x_a, x_b, \pi)\in\KK^2\times\Pi$, hence
$F^{\inv}(\cdot, \pi)$ is Lipschitz continuous over its domain of definition, with Lipschitz constant $\kappa:=\frac{1}{\rho}$.
Hence, according to \cite[Theorem 1]{mcshane}, $F^{\invert}(\cdot, \pi)$ admits an extension $F^{\invert}(\cdot, \pi):\R^{m}\to\R^n$, defined by
$$
F^{\invert}(z, \pi) =
(
\min_{\tilde z\in F_\pi(\KK)} \{F^{\invert}_j(z, \pi)+\kappa |\tilde z-z |\}
)_{1\leq j\leq n}
$$
for all $(z,\pi)\in\R^m\times\Pi$, where $F^{\invert}_j$ is the $j$th-coordinate of $F^{\invert}$, such that $F^\invert$ is continuous
and globally Lipschitz continuous
with respect to $z$
with constant $\kappa\sqrt{n}$, hence uniformly with respect to $\pi$.

\end{enumerate}

\end{proof}

\begin{lemma}\label{lem:R0}
For all $u_0, v_0\in\R^p$ and all $\RR_{u_0}\in \RRR_0(u_0)$, there exists $\RR_{v_0}\in \RRR_0(v_0)$ such that
\begin{equation}\label{eq:R0}
\||u_0|\RR_{u_0}-|v_0|\RR_{v_0}\|\leq |u_0-v_0|
\end{equation}
\end{lemma}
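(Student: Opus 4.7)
The plan is to construct $\RR_{v_0}$ in the form $T\RR_{u_0}$ for a carefully chosen $T\in\O(p)$. Since $\RR_{u_0}e_1=u_0/|u_0|$ by the definition of $\RRR_0(u_0)$, any $T$ sending $u_0/|u_0|$ onto $v_0/|v_0|$ will yield $T\RR_{u_0}\in\RRR_0(v_0)$ automatically. Assuming for the moment $u_0,v_0\neq 0$ and $u_0,v_0$ non-colinear, I take $T$ to be the \emph{minimal rotation} sending $a:=u_0/|u_0|$ onto $b:=v_0/|v_0|$: setting $\theta=\arccos\langle a,b\rangle\in(0,\pi)$, I let $T$ act as the identity on $\mathrm{span}(a,b)^\perp$ and as the planar rotation of angle $\theta$ inside $\mathrm{span}(a,b)$.

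With this choice, orthogonal invariance of the induced norm immediately gives
\[
\||u_0|\RR_{u_0}-|v_0|T\RR_{u_0}\|=\||u_0|I-|v_0|T\|,
\]
and I would then split $\R^p=\mathrm{span}(a,b)\oplus\mathrm{span}(a,b)^\perp$. On the complement, $T=I$, so the restriction of $|u_0|I-|v_0|T$ acts as multiplication by $|u_0|-|v_0|$, contributing operator norm $||u_0|-|v_0||\leq|u_0-v_0|$. Inside $\mathrm{span}(a,b)$, working in the orthonormal basis $(a,\tilde a)$ with $b=\cos\theta\,a+\sin\theta\,\tilde a$, the matrix of $|u_0|I-|v_0|T$ reads
\[
\begin{pmatrix}|u_0|-|v_0|\cos\theta & |v_0|\sin\theta\\ -|v_0|\sin\theta & |u_0|-|v_0|\cos\theta\end{pmatrix},
\]
whose two singular values both equal $\sqrt{|u_0|^2-2|u_0||v_0|\cos\theta+|v_0|^2}=|u_0-v_0|$ (using $\langle u_0,v_0\rangle=|u_0||v_0|\cos\theta$). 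Taking the max over the two orthogonal blocks gives exactly $\||u_0|I-|v_0|T\|=|u_0-v_0|$.

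The degenerate cases are easily handled: if $u_0=0$ or $v_0=0$ then orthogonal invariance reduces the bound to $|v_0|\leq|v_0|$ or $|u_0|\leq|u_0|$ and any $\RR_{v_0}\in\RRR_0(v_0)$ works; if $a=b$ one takes $T=I$; and if $a=-b$ one takes any half-turn in a plane containing $a$, which corresponds to $\theta=\pi$ in the formula above. There is no deep difficulty in this proof; the main subtlety is precisely this choice of factorization $\RR_{v_0}=T\RR_{u_0}$, which is what enables orthogonal invariance to eliminate $\RR_{u_0}$ and reduce the estimate to the explicit $2\times 2$ computation — otherwise a direct comparison of two orthogonal matrices with prescribed first columns would require comparing their remaining $p-1$ columns and is considerably less transparent.
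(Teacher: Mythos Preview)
Your proof is correct and follows essentially the same approach as the paper's: both write $\RR_{v_0}=T\RR_{u_0}$ with $T$ a rotation acting nontrivially only in a $2$-plane containing $u_0/|u_0|$ and $v_0/|v_0|$, use orthogonal invariance to reduce to $\||u_0|I-|v_0|T\|$, and then split into the plane (where the explicit $2\times2$ computation gives exactly $|u_0-v_0|$) and its orthogonal complement (where one gets $||u_0|-|v_0||$). The only cosmetic difference is that the paper first conjugates by an auxiliary $\tilde\RR\in\O(p)$ to bring $u_0,v_0$ into $\mathrm{span}(e_1,e_2)$ before rotating, whereas you rotate directly in $\mathrm{span}(a,b)$; the resulting $T$ is the same.
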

\begin{proof}
When $p=1$, the result follows by picking $\RR_{v_0} \in \{-1,1\}$ such that $|v_0|\RR_{v_0}= v_0$. Now, assume $p>1$.
If $v_0 = 0$, simply set $\RR_{v_0} = \RR_{u_0}$.
If $u_0=0$,
\eqref{eq:R0} holds for any $\RR_{v_0}\in\RRR_0(v_0)$.
Now, assume that $u_0\neq0$ and $v_0\neq0$.
Let $\tilde\RR\in\O(p)$ be such that
$\tilde\RR u_0$ and $\tilde\RR v_0$ belong to  $\mathrm{span}\{e_1, e_2\}$
with $e_1=(1,0,0,\dots,0)$ and $e_2=(0,1,0,\dots,0)$.
Let
$\bar\RR=\mathrm{diag}(J, 0,\dots,0)
$
with $J=
\begin{pmatrix}
\cos(\psi)&-\sin(\psi)\\
    \sin(\psi)&\cos(\psi)
\end{pmatrix}$
and $\psi\in(-\pi, \pi]$
be such that
$\bar\RR\tilde\RR u_0 = \tilde\RR v_0$
.
Define $\RR_{v_0} =
\tilde\RR^{-1}
\bar\RR
\tilde\RR
\RR_{u_0}$.
Then
$\RR_{v_0}(|v_0|,0,\dots,0) = v_0$
and
\begin{align*}
\big\||u_0|\RR_{u_0}-|v_0|\RR_{v_0}\big\|
&=
\big\||u_0|\RR_{u_0}-|v_0|
\tilde\RR^{-1}
\bar\RR
\tilde\RR
\RR_{u_0}
\big\|
\\
&\leq
\big\|
|u_0|\tilde\RR^{-1}\tilde\RR-|v_0|
\tilde\RR^{-1}
\bar\RR
\tilde\RR
\big\|
\\
&\leq
\big\||u_0|\Id_{\R^p}-|v_0|
\bar\RR
\big\|
\\
&\leq
\max\left(
\big\||u_0|\Id_{\R^2}-|v_0|
J
\big\|
, ||u_0|-|v_0||\right)
\\
&\leq
\max\left(
\sqrt{|u_0|^2+|v_0|^2-2|u_0||v_0|\cos(\psi)},
|u_0-v_0|\right)
\\
&\leq
|u_0-v_0|.
\end{align*}
\end{proof}

\section{Proofs of Lemmas \ref{lem:analytic}, \ref{lem:hyp12} and \ref{Ass_HG}}
\label{sec:tec2}

\begin{proof}[Proof of Lemma \ref{lem:analytic}]
For all $i\in\N$, define
$E_i := \{(x_a, x_b) \in \R^n\times\R^n\mid \HH_i(x_a, 0, u) = \HH_i(x_b, 0, u)\}$.
Then $(E_i)_{k\in\N}$ is a non-increasing family of analytic sets.
According to \cite[Chapter 5, Corollary 1]{Narasimhan}, $(E_i \cap \KK_x^2)_{i\in\N}$ is stationary, \emph{i.e.}, there exists $k\in\N$ such that $E_k \cap \KK_x^2 = E_i \cap \KK_x^2$ for all $i\geq k$. Hence,
$E_k \cap \KK_x^2=\bigcap_{i\in\N} E_i\cap \KK_x^2$.
Under the analyticity assumptions, for any pair $x_a,x_b\in \KK_x$, having $h\circ X(x_a,\cdot ,u)\neq h\circ X(x_b,\cdot ,u)$ implies that time derivatives at $t=0$ of these two functions must differ at some rank, i.e., there exists $i$ such that $\HH_i(x_a, 0, u)\neq \HH_i(x_b,0,u)$.
Since $u$ is analytic, the observability assumption yields $\bigcap_{i\in\N} E_i \cap \KK_x^2 = \{(x, x), x\in\KK_x\}$.
Thus $E_k \cap \KK_x^2=\{(x, x), x\in\KK_x\}$, i.e. \eqref{syst} is differentially observable of order $k$ over $\mathring \KK_x$.
\end{proof}

\begin{proof}[Proof of Lemma \ref{lem:hyp12}]
By Lemma~\ref{lem:analytic}, Assumption~\ref{ass:obs0} implies that system \eqref{syst} is differentially observable over $\KK_x$ of some order $k_1\in\N$.
Under Assumption \ref{ass:immersion0}, for any $x\in\R^n$, let $U_x\subset\R^n$ be an open set such that $\frac{\partial \HH_{k_x}(\cdot, 0, 0)}{\partial x}$ is injective over $U_x$. Such an open set exists since $x\mapsto \frac{\partial \HH_{k_x}(\cdot, 0, 0)}{\partial x}(x)$ is continuous over $\XX$ and the set of injective matrices in $\R^{m(k+1)\times n}$ is open.
Since $(U_x)_{x\in\KK_x}$ is an open cover of the compact set $\KK_x$, one can extract a finite subcover $(U_{x_i})_{i\in\II}$, with $\II$ a finite set and $x_i\in\KK_x$ for each $i\in\II$.
Set $k_2 = \max_{i\in\II} k_{x_i}$. 
Note that for each $x\in U_{x_i}$ and each $k\in\N$,
$\frac{\partial \HH_{k}(\cdot, 0, 0)}{\partial x}$ is injective for all $k\geq k_{x_i}$.
Hence, for all $x\in\KK_x$,
$\frac{\partial \HH_{k_2}(\cdot, 0, 0)}{\partial x}$ is injective.
Finally, set $k = \max(k_1, k_2)$.
By definition of $k_1$ and $k_2$, system \eqref{syst} is strongly differentially observable of order $k$.
\end{proof}

\begin{proof}[Proof of Lemma \ref{Ass_HG}]
It is a direct corollary of Lemma \ref{lem:iminj}-\eqref{item3}
applied to the map $\HH_q$, since $x\mapsto \HH_q(x, t, \mu\RR v^*)$ 
is an injective immersion over the compact set $\KK_x'$ for all $(t, \mu, \RR)$ in the compact space $[0,T]\times[0, \bar \lambda]\times \O(p)$.
\end{proof}

\bibliographystyle{abbrv}
\bibliography{biblio}

\begin{thebibliography}{10}

\bibitem{ludo}
A.~Ajami, J.-P. Gauthier, and L.~Sacchelli.
\newblock Dynamic output stabilization of control systems: An unobservable
  kinematic drone model.
\newblock {\em Automatica J. IFAC}, 125:109383, 2021.

\bibitem{doi:10.1137/110853091}
V.~Andrieu.
\newblock Convergence speed of nonlinear luenberger observers.
\newblock {\em SIAM Journal on Control and Optimization}, 52(5):2831--2856,
  2014.

\bibitem{AndrieuPraly2009}
V.~Andrieu and L.~Praly.
\newblock A unifying point of view on output feedback designs for global
  asymptotic stabilization.
\newblock {\em Automatica J. IFAC}, 45(8):1789--1798, 2009.

\bibitem{MR2896292}
V.~I. Arnold, S.~M. Gusein-Zade, and A.~N. Varchenko.
\newblock {\em Singularities of differentiable maps. {V}olume 1}.
\newblock Modern Birkh\"{a}user Classics. Birkh\"{a}user/Springer, New York,
  2012.
\newblock Classification of critical points, caustics and wave fronts,
  Translated from the Russian by Ian Porteous based on a previous translation
  by Mark Reynolds, Reprint of the 1985 edition.

\bibitem{bernard2019observer}
P.~Bernard.
\newblock {\em Observer design for nonlinear systems}, volume 479.
\newblock Springer, 2019.

\bibitem{BERNARD2022224}
P.~Bernard, V.~Andrieu, and D.~Astolfi.
\newblock Observer design for continuous-time dynamical systems.
\newblock {\em Annual Reviews in Control}, 53:224--248, 2022.

\bibitem{9039752}
P.~Bernard and L.~Praly.
\newblock Estimation of position and resistance of a sensorless pmsm: A
  nonlinear luenberger approach for a nonobservable system.
\newblock {\em IEEE Transactions on Automatic Control}, 66(2):481--496, 2021.

\bibitem{brivadis2023output}
L.~Brivadis, J.-P. Gauthier, and L.~Sacchelli.
\newblock Output feedback stabilization of non-uniformly observable systems.
\newblock {\em Proceedings of the Steklov Institute of Mathematics},
  321(1):69--83, 2023.

\bibitem{brivadis2021new}
L.~Brivadis, J.-P. Gauthier, L.~Sacchelli, and U.~Serres.
\newblock New perspectives on output feedback stabilization at an unobservable
  target.
\newblock {\em ESAIM: Control, Optimisation and Calculus of Variations},
  27:102, 2021.

\bibitem{9683664}
L.~Brivadis and L.~Sacchelli.
\newblock A switching technique for output feedback stabilization at an
  unobservable target.
\newblock In {\em 2021 60th IEEE Conference on Decision and Control (CDC)},
  pages 3942--3947, 2021.

\bibitem{BRIVADIS20239517}
L.~Brivadis and L.~Sacchelli.
\newblock Output feedback stabilization of non-uniformly observable systems by
  means of a switched kalman-like observer.
\newblock {\em IFAC-PapersOnLine}, 56(2):9517--9522, 2023.
\newblock 22nd IFAC World Congress.

\bibitem{coron1994stabilization}
J.-M. Coron.
\newblock On the stabilization of controllable and observable systems by an
  output feedback law.
\newblock {\em Mathematics of Control, Signals and Systems}, 7:187--216, 1994.

\bibitem{dragomir2003some}
S.~Dragomir.
\newblock {\em Some Gronwall Type Inequalities and Applications}.
\newblock Nova Science Publishers, 2003.

\bibitem{engelking1989general}
R.~Engelking.
\newblock General topology.
\newblock {\em Sigma series in pure mathematics}, 6, 1989.

\bibitem{256352}
J.~Gauthier, H.~Hammouri, and S.~Othman.
\newblock A simple observer for nonlinear systems applications to bioreactors.
\newblock {\em IEEE Transactions on Automatic Control}, 37(6):875--880, 1992.

\bibitem{Gauthier_book}
J.-P. Gauthier and I.~Kupka.
\newblock {\em Deterministic observation theory and applications}.
\newblock Cambridge University Press, Cambridge, 2001.

\bibitem{MR1862985}
J.-P. Gauthier and I.~Kupka.
\newblock {\em Deterministic observation theory and applications}.
\newblock Cambridge University Press, Cambridge, 2001.

\bibitem{goedel2012hybrid}
R.~Goedel, R.~G. Sanfelice, and A.~R. Teel.
\newblock Hybrid dynamical systems: modeling stability, and robustness.
\newblock {\em Princeton, NJ, USA}, 2012.

\bibitem{jouan}
P.~{Jouan} and J.-P. {Gauthier}.
\newblock Finite singularities of nonlinear systems. output stabilization,
  observability and observers.
\newblock In {\em Proceedings of 1995 34th IEEE Conference on Decision and
  Control}, volume~4, pages 3295--3299 vol.4, 1995.

\bibitem{jouan1996finite}
P.~Jouan and J.-P. Gauthier.
\newblock Finite singularities of nonlinear systems. output stabilization,
  observability, and observers.
\newblock {\em Journal of Dynamical and Control systems}, 2(2):255--288, 1996.

\bibitem{10264115}
M.~Maghenem, W.~Pasillas-Lépine, A.~Loría, and M.~Aguado-Rojas.
\newblock Hybrid observer-based asymptotic stabilization of non-uniformly
  observable systems: a case study.
\newblock {\em IEEE Transactions on Automatic Control}, pages 1--8, 2023.

\bibitem{mcshane}
E.~J. McShane.
\newblock {Extension of range of functions}.
\newblock {\em Bulletin of the American Mathematical Society}, 40(12):837 --
  842, 1934.

\bibitem{Narasimhan}
R.~Narasimhan.
\newblock {\em Introduction to the theory of analytic spaces}.
\newblock Lecture Notes in Mathematics, No. 25. Springer-Verlag, Berlin-New
  York, 1966.

\bibitem{perov1959k}
A.~Perov.
\newblock K voprosu o strukture integral’no{\i} voronki.
\newblock {\em Nauc. Dokl. Vys{\v{s}}e{\i}i {\v{S}}koly. Ser FMN}, 2, 1959.

\bibitem{PralyBresch2022a}
L.~Praly and D.~Bresch-Pietri.
\newblock {\em Fonctions de {L}yapunov : stabilité}.
\newblock Spartacus-Idh, 2022.

\bibitem{PRIEUR2004847}
C.~Prieur and L.~Praly.
\newblock A tentative direct lyapunov design of output feedbacks.
\newblock {\em IFAC Proceedings Volumes}, 37(13):847--852, 2004.
\newblock 6th IFAC Symposium on Nonlinear Control Systems 2004 (NOLCOS 2004),
  Stuttgart, Germany, 1-3 September, 2004.

\bibitem{Rapaport}
A.~{Rapaport} and D.~{Dochain}.
\newblock A robust asymptotic observer for systems that converge to
  unobservable states—a batch reactor case study.
\newblock {\em IEEE Transactions on Automatic Control}, 65(6):2693--2699, 2020.

\bibitem{SACCHELLI20204923}
L.~Sacchelli, L.~Brivadis, V.~Andrieu, U.~Serres, and J.-P. Gauthier.
\newblock Dynamic output feedback stabilization of non-uniformly observable
  dissipative systems.
\newblock {\em IFAC-PapersOnLine}, 53(2):4923--4928, 2020.
\newblock 21st IFAC World Congress.

\bibitem{shim2003asymptotic}
H.~Shim and A.~R. Teel.
\newblock Asymptotic controllability and observability imply semiglobal
  practical asymptotic stabilizability by sampled-data output feedback.
\newblock {\em Automatica}, 39(3):441--454, 2003.

\bibitem{surroop2020adding}
D.~{Surroop}, P.~{Combes}, P.~{Martin}, and P.~{Rouchon}.
\newblock Adding virtual measurements by pwm-induced signal injection.
\newblock In {\em 2020 American Control Conference (ACC)}, pages 2692--2698,
  2020.

\bibitem{MR0541865}
H.~J. Sussmann.
\newblock Single-input observability of continuous-time systems.
\newblock {\em Math. Systems Theory}, 12(4):371--393, 1979.

\bibitem{TeelPraly1994}
A.~Teel and L.~Praly.
\newblock Global stabilizability and observability imply semi-global
  stabilizability by output feedback.
\newblock {\em Systems \& Control Letters}, 22(5):313--325, 1994.

\bibitem{TeelPraly1995}
A.~Teel and L.~Praly.
\newblock Tools for semiglobal stabilization by partial state and output
  feedback.
\newblock {\em SIAM Journal on Control and Optimization}, 33(5):1443--1488,
  1995.

\end{thebibliography}
\end{document}